 \newtheorem{thm}{Theorem}[section]
 \newtheorem{prop}[thm]{Proposition}
 \newtheorem{cor}[thm]{Corollary}
\theoremstyle{definition}
 \newtheorem{rem}[thm]{Remark}
 \newtheorem{definition}{Definition}[section]
\numberwithin{equation}{section}
\newtheorem{example}{\sc Example}
\newcommand\mj{\mbox{\bf 1}}
\newcommand{\ket}[1]{| {#1} \rangle} 
\newcommand\cmd{\mathbf{Cmd}}
\newcommand\ps{\raisebox{0.7 pt}{${\pmb{\scriptstyle +}}$}}
\newcommand\ms{\raisebox{0.7 pt}{${\pmb{\scriptstyle -}}$}}
\newcommand\cob{\mathbf{Cob}}
\newcommand\cobG{\mathbf{Cob}_\mathfrak{G}}
\newcommand\fdhilb{\mathbf{fdHilb}}
\def\d#1{{#1\kern-0.4em\char"16\kern-0.1em}}
\def\D#1{{\raise0.2ex\hbox{-}\kern-0.4em#1}}
\def \Dj{\mbox{\raise0.3ex\hbox{-}\kern-0.4em D}}
\definecolor{britishracinggreen}{rgb}{0.0, 0.26, 0.15}
\definecolor{qqffqq}{rgb}{0.,1.,0.}
\definecolor{qqqqff}{rgb}{0.,0.,1.}
\definecolor{ffqqqq}{rgb}{1.,0.,0.}
\definecolor{ffxfqq}{rgb}{1.,0.4980392156862745,0.}
\definecolor{ududff}{rgb}{0.30196078431372547,0.30196078431372547,1.}
\title{A diagrammatic calculus for categorical quantum protocols}
\author[\D{D}or\D{d}evi\' c]{Du\v san \D{D}or\D{d}evi\' c}
\address{\scriptsize{Faculty of Physics \\ Studentski trg 12\\ 11001 Belgrade, Serbia}}
\email{dusan.djordjevic@ff.bg.ac.rs}
\author[Petri\' c]{Zoran Petri\' c}
\address{\scriptsize{Mathematical Institute SANU\\ Knez Mihailova 36, p.f.\ 367\\ 11001 Belgrade, Serbia}}
\email{zpetric@mi.sanu.ac.rs}
\author[Zeki\' c]{Mladen Zeki\' c}
\address{\scriptsize{Mathematical Institute SANU\\ Knez Mihailova 36, p.f.\ 367\\ 11001 Belgrade, Serbia}}
\email{mzekic@mi.sanu.ac.rs}
\date{}
\begin{document}

\begin{abstract}
As shown by Abramsky and Coecke, quantum mechanics can be studied in terms of dagger compact closed categories with biproducts. Within this structure, many well-known quantum protocols can be described and their validity can be shown by establishing the commutativity of certain diagrams in that category. In this paper, we propose an explicit realisation of a category with enough structure to check the validity of a certain class of quantum protocols. In order to do this, we construct a category based on 1-dimensional cobordisms with attached elements of a certain group freely generated by a finite set. We use this category as a graphical language, and we show that it is dagger compact closed with biproducts. Then, relying on the coherence result for compact closed categories, proved by Kelly and Laplaza, we show the coherence result, which enables us to check the validity of quantum protocols just by drawing diagrams. In particular, we show the validity of quantum teleportation, entanglement swapping (as formulated in the work of Abramsky and Coecke) and superdense coding protocol.

\noindent {\small {\it Mathematics Subject Classification} ({\it 2020}): 18M10, 18M40, 18D20, 81P45, 57Q20}

\noindent {\small {\it Keywords$\,$}: dagger compact closed
category, biproducts, cobordism, entanglement, coherence}
\end{abstract}

\maketitle

\section{Introduction}\label{uvod}
This paper offers a mathematical result and its application in the field of quantum information. The goal is to provide a minimal graphical language sufficient for verification of categorical quantum protocols. This is achieved through a category $1\cobG^\oplus$ based on the category of 1-dimensional cobordisms. We will work out in some details a few of quantum protocols and apply a technique of their verification based on the category $1\cobG^\oplus$. This work contains both pure mathematical results, as well as results in the applied field of quantum information, which are related to both physics and computer science. Therefore, we will try to make our exposition sufficiently detailed, in order to make the results available to a larger group of scientists from different areas.

There are several diagrammatic calculi proposed earlier (e.g.\ \cite{CO05,S07}) appropriate for verification of categorical quantum protocols. Our intention is to make one within a frame of a category relevant for quantum mechanics---the category of cobordisms. This is achieved by relying on a result from \cite{KL80} just by relaxing (i.e.\ neglecting some components of) the arrows of a category constructed in that paper. However, some labels of connected components of the underlying manifolds remain in our calculus, but we have minimized their role. One way to get rid of all labels is to increase the dimension of cobordisms in question. We discuss this suggestion in Section~\ref{labels}. Note that we are not trying to provide a diagrammatic calculus that is better for practical purposes that the existing ones (for example, ZX calculus \cite{CD11}), but rather to, in a very precise manner, formulate one calculus that uses geometry and that is able to capture some aspects of quantum mechanics. We do not claim that we can formulate and check the validity of all quantum protocols (for example, it is hard to simulate the most general unitary operator acting on two qubits).

Ever since its formulation in the first half of the 20-th century, quantum mechanics is naturally set to live in a separable Hilbert space. This enables one to talk about notions as entanglement and measurement in an almost trivial way. However, they are far from being understood by a scientific community. While the basic mathematical formalism is easy to understand, its physical meaning is much less clear and various interpretations of quantum mechanics are possible, without currently any basis on which we could select only one correct. This means that it could be fruitful to reconsider some basic notion about quantum mechanics, and to try to formulate it in terms of a different mathematical structure. Similar motivation led authors of \cite{AC04} to develop the so called categorical approach to quantum mechanics in terms of dagger compact closed categories with biproducts.
Of course, the starting point is naturally the case where standard Hilbert space formalism leads to the consideration of finite dimensional spaces (for example, consideration of spin $1/2$ gives rise to the two dimensional Hilbert space; on the other hand, it is not too hard to construct classical theories whose Hilbert space after quantization turns out to be finite dimensional, \cite{T14}, although they are much less known for an average physicist). For simplicity, and for the practical application, we restrict to the case of two dimensional Hilbert spaces (qubits). Basis vectors are $\ket{0}$ and $\ket{1}$ (we use Dirac notation for this part). If we denote $\mathcal{H}={\rm span}\,(\ket{0},\ket{1})$, then the total Hilbert space of a composite bipartitive system is $\mathcal{H}\otimes \mathcal{H}$. As we use tensor product, there are no projections to $\mathcal{H}$, and we can introduce the notion of an entangled state. For example, we can form a basis in $\mathcal{H}\otimes \mathcal{H}$ from the entangled states as
\begin{align*}
&\ket{\beta_1}=\frac{1}{\sqrt{2}}(\ket{0}\otimes\ket{0}+\ket{1}\otimes\ket{1}),\\
&\ket{\beta_2}=\frac{1}{\sqrt{2}}(\ket{0}\otimes\ket{1}+\ket{1}\otimes\ket{0}),\\
&\ket{\beta_3}=\frac{1}{\sqrt{2}}(\ket{0}\otimes\ket{1}-\ket{1}\otimes\ket{0}),\\
&\ket{\beta_4}=\frac{1}{\sqrt{2}}(\ket{0}\otimes\ket{0}-\ket{1}\otimes\ket{1}).
\end{align*}
Basis $\{\beta_i\}_{i=1}^{i=4}$ is usually referred to as Bell basis (see e.g. \cite{NC10}). Of course, this construction is made on a few assumptions. The first one is a linear structure of vector spaces. Historically, linear structure was a natural guess based on an intuition about wavelike properties of particles (electron). For example, it was known long before the birth of quantum mechanics that light can be described in terms of oscillating electric and magnetic field, and that for those fields superposition principle holds (linearity of Maxwell's equations). Moreover, intensity of a wave is proportional to $|\mathbf{E}|^2$ (where $\mathbf{E}$ stands for a complex representative of electric field), and this further motivates the Born rule  for a probabilities associated with the measurement outcome. Despite its success, it is still intriguing to consider theories without vector space structure.

Quantum mechanics can be considered as a special case of quantum field theories for $0+1$ dimensions. On the other hand, it is well known that one approach to quantum field theories (especially to the case of topological quantum field theories) is using cobordisms to represent space-time evolution processes. This opens another natural question, and that is to what extent one can use the category $1\cob$ (of 1-dimensional cobordisms) to simulate quantum mechanical processes. In addition, there are ideas from quantum gravity, in the context of AdS/CFT correspondence, that some aspects of quantum theory (for example, entanglement entropy) could be obtained from geometry \cite{RT06}. Quantum-geometry relation is also evident in $\mathrm{ER}=\mathrm{EPR}$ proposal \cite{M13}. Therefore, seeking the role of geometry in quantum physics is interesting on its own.  The results of our paper can be viewed as a step toward understanding the connection between quantum mechanics and geometry.

In order to obtain this correspondence, we will introduce the notion of $\mathfrak{G}$-cobordisms. They correspond to a regular cobordisms (for a precise definition of 1-dimensional cobordisms see the following sections), but with additional structure, such that each connected component has an element of a group $\mathfrak{G}$ attached. This introduces a notion of a $\mathfrak{G}$-segment or a $\mathfrak{G}$-circle. Group elements will play the role of (unitary) transformations that can be done on a quantum state. Note that similar idea was discussed in \cite{K10}, but without explicit referring to the categorical quantum mechanics.

On the other hand, a motivation for our work can be purely mathematical. In category theory and its application, it is of great importance to establish whether a certain diagram commutes. Usually, this is done by inspection, using a set of equalities (for example as those from Appendix~\ref{language}). Though in principle a straightforward task, it usually consumes a non-negligible amount of time. For this reason, it is practical to prove certain coherence results. Such a result enables one to check the commutativity of diagrams, consisting of canonical arrows of a certain categorical structure, just by drawing pictures in an appropriate graphical language. A detailed explanation of our approach to coherence is given in \cite[Introduction]{PZ21}, where also results akin to those proven here are presented. Briefly, we start with a freely generated category built out of syntax material, whose objects are formulae and arrows are equivalence classes of terms in an equational system. Then we show a completeness result with respect to a model in a form of a graphical category.

In Section \ref{closed} we review some basic categorical notions relevant for this paper. In Section \ref{jedancob} we further discuss the category $1\cob$. Section \ref{free category} introduces two compact closed categories with some additional structure both freely generated by a free group considered as a category. The isomorphism of these two categories is established in that section. Next two sections (Sections \ref{injekcije} and \ref{normalizacija}) are technical necessity, and could be skipped in the first reading. In Section \ref{graficki} we introduce gradually (in several steps) the category $1\cobG^\oplus$, and formulate the coherence result essential for the verification of certain categorical quantum protocols, a task we perform in Section~\ref{validity}. A possibility of omitting the labels via increasing the dimension of cobordisms is given in Section~\ref{labels}. We give our final remarks in Section~\ref{conclusion}. Appendix A contains an equational presentation of dagger compact closed categories with dagger biproducts and Appendix B discusses a categorical approach to scalars and probability amplitudes.

\section{Closed categories and biproducts}\label{closed}

Some notions from category theory relevant for this paper are introduced in this section. A \emph{symmetric monoidal category} is a category $\mathcal{A}$ equipped with a distinguished object $I$, a bifunctor $\otimes\colon \mathcal{A}\times \mathcal{A}\to \mathcal{A}$ (we abbreviate $\mj_a\otimes f$ and $f\otimes\mj_a$ by $a\otimes f$ and $f\otimes a$, respectively) and the natural isomorphisms $\alpha$, $\lambda$ and $\sigma$ with components $\alpha_{a,b,c}\colon a\otimes(b\otimes c)\to (a\otimes b)\otimes c$, $\lambda_a\colon I\otimes a\to a$ and $\sigma_{a,b}\colon a\otimes b\to b\otimes a$. (Note that in \cite{AC04}, $\lambda$ denotes the inverse of our $\lambda$ and due to the presence of symmetry, we do not introduce a name for the isomorphism $a\cong a\otimes I$.) Moreover, the coherence conditions concerning the arrows of $\mathcal{A}$ (see the equalities~\ref{19}-\ref{21} in Appendix~\ref{language}) hold. A symmetric monoidal category is \emph{monoidally strict} when the operation $\otimes$ on its objects is associative with $I$ being the neutral, and moreover, the arrows $\alpha$ and $\lambda$ are identities.

A \emph{compact closed category} is a symmetric monoidal category in which every object $a$ has its \emph{dual} $a^\ast$. This means that there are \emph{units} $\eta_a\colon I\to a^\ast\otimes a$ and \emph{counits} $\varepsilon_a\colon a\otimes a^\ast\to I$ such that the equalities~\ref{23} of Appendix~\ref{language} hold. If a functor between two compact closed categories preserves this structure ``on the nose'', then we say that it \emph{strictly preserves} the compact closed structure, and we use the same terminology in other cases.

It is straightforward to conclude that the following isomorphisms hold in every compact closed category.
\[
u_{a,b}\colon (a\otimes b)^\ast\cong b^\ast\otimes a^\ast,\quad v\colon I^\ast\cong I,\quad w_a\colon a^{\ast\ast}\cong a
\]
(In a monoidally strict compact closed category
\[
u_{a,b}=(b^\ast\otimes a^\ast\otimes\varepsilon_{a\otimes b})\circ(b^\ast\otimes \eta_a\otimes b\otimes (a\otimes b)^\ast)\circ (\eta_b\otimes (a\otimes b)^\ast)
\]
$v=\varepsilon_I$ and $w_a=(\varepsilon_{a^\ast}\otimes a)\circ (\sigma_{a^{\ast\ast},a^\ast}\otimes a)\circ (a^{\ast\ast}\otimes\eta_a)$.) A compact closed category is \emph{strict} when it is monoidally strict and $(a\otimes b)^\ast= b^\ast\otimes a^\ast$, $I^\ast= I$ and $a^{\ast\ast}= a$, while $u_{a,b}$, $v$ and $w_a$ are identities.

For quantum protocols discussed below, the following derived operations on arrows of a compact closed category are frequently used. For $f\colon a\to b$ its \emph{name} $\ulcorner f \urcorner\colon I\to a^\ast\otimes b$ and its \emph{coname} $\llcorner f \lrcorner\colon a\otimes b^\ast\to I$ are defined as
\[
\ulcorner f \urcorner=(a^\ast\otimes f)\circ \eta_a,\quad \llcorner f \lrcorner=\varepsilon_b\circ(f\otimes b^\ast).
\]
The function $\ast$ on objects of a compact closed category $\mathcal{A}$, extends to a functor $^\ast\colon\mathcal{A}^{\mathrm{op}}\to \mathcal{A}$ in the following way. For $f\colon a\to b$, let $f^\ast\colon b^\ast\to a^\ast$ be
\[
\lambda_{a^\ast}\circ \sigma_{a^\ast,I}\circ (a^\ast\otimes \varepsilon_b) \circ \alpha^{-1}_{a^\ast,b,b^\ast}\circ((a^\ast\otimes f)\otimes b^\ast)\circ(\eta_a\otimes b^\ast)\circ\lambda_{b^\ast}^{-1}.
\]

A \emph{dagger category} is a category $\mathcal{A}$ equipped with a functor $\dagger\colon \mathcal{A}\to \mathcal{A}^{\mathrm{op}}$ such that for every object $a$ and every arrow $f$ of this category $a^\dagger=a$, and $f^{\dagger\dagger}=f$. (For more details see \cite{S07,H09}.) A \emph{dagger} compact closed category is a compact closed category $\mathcal{A}$, which is also a dagger category satisfying the equalities \ref{28}-\ref{30} of Appendix~\ref{language}.

By composing the functors $\dagger$ and $^\ast$ one obtains the functor $_\ast=\, ^\ast\circ\dagger\colon \mathcal{A}\to \mathcal{A}$ ($a_\ast=a^\ast$, $f_\ast=(f^\dagger)^\ast$). For a strict dagger compact closed category $\mathcal{A}$, the functor $_\ast$ satisfies
\[
f_{\ast\ast}=f,\quad (f_\ast)^\ast=(f^\ast)_\ast.
\]

A \emph{zero-object} is an object which is both initial and terminal. For a category with a zero-object $0$ there is a composite $0_{a,b}\colon a\to 0\to b$ for every pair $a$, $b$ of its objects, and for every other zero-object $0'$ of this category, the composite $a\to 0'\to b$ is equal to $0_{a,b}$. A  \emph{biproduct} of $a_1$ and $a_2$ in a category with a zero-object consists of a coproduct and a product diagram
\[
a_1\stackrel{\iota^1\:}{\longrightarrow} a_1\oplus a_2 \stackrel{\:\iota^2}{\longleftarrow}a_2,\quad\quad\quad
a_1\stackrel{\:\pi^1}{\longleftarrow} a_1\oplus a_2 \stackrel{\pi^2\:}{\longrightarrow}a_2
\]
for which
\begin{equation}\label{pijota}
\pi^j\circ\iota^i= \left\{\begin{array}{ll}
    \mj_{a_i}, & i=j,
    \\[1ex]
    0_{a_i,a_j}, & \mbox{\rm otherwise}, \end{array} \right .
\end{equation}
where $i,j\in\{1,2\}$ (cf.\ the equalities \ref{13}-\ref{14} in Appendix~\ref{language}). For arrows $f_1\colon a_1\to c$ and $f_2\colon a_2\to c$, the unique arrow $h\colon a_1\oplus a_2\to c$ for which $h\circ\iota^i=f_i$, $i\in\{1,2\}$ is denoted by $[f_1,f_2]$, and for arrows $g_1\colon c\to a_1$ and $g_2\colon c\to a_2$, the unique arrow $h\colon c\to a_1\oplus a_2$ for which $\pi^i\circ h=g_i$, $i\in\{1,2\}$ is denoted by $\langle f_1,f_2\rangle$.

More generally, a biproduct of a family of objects $\{a_j\mid j\in J\}$ consists of a universal cocone (coproduct diagram) and a universal cone (product diagram)
\[
\{\iota^j\colon a_j\to \oplus_{j\in J} a_j\mid j\in J\}, \quad\quad\quad \{\pi^j\colon \oplus_{j\in J} a_j\to a_j\mid j\in J\}
\]
for which the equality~\ref{pijota} holds for all $i,j\in J$. A \emph{category with biproducts} is a category with a zero-object and biproducts for every pair of objects. A biproduct is a \emph{dagger biproduct} when for every pair $a$, $b$ of objects the equalities~\ref{31} of Appendix~\ref{language} hold.

For $f,g\colon a\to b$ in a category with biproducts whose \emph{codiagonal} and \emph{diagonal} maps are $\mu_b\colon b\oplus b\to b$ and $\bar{\mu}_a\colon a\to a\oplus a$ one defines $f+g$ as $\mu_b\circ(f\oplus g)\circ\bar{\mu}_a$. This operation on the set ${\rm Hom}\,(a,b)$ of arrows from $a$ to $b$ is commutative and has $0_{a,b}$ as neutral. Moreover, the composition distributes over $+$. Hence, every category with biproducts may be conceived as a category enriched over the category $\cmd$ of \emph{commutative monoids}.

Alternatively, to define biproducts in a category enriched over $\cmd$ it suffices to assume the existence of a bifunctor $\oplus$, a special object $0$, and for every pair of objects $a$, $b$ the arrows $\pi^1_{a,b}\colon a\oplus b\to a$, $\pi^2_{a,b}\colon a\oplus b\to b$, $\iota^1_{a,b}\colon a\to a\oplus b$ and $\iota^2_{a,b}\colon b\to a\oplus b$, for which the equalities \ref{10}-\ref{22} of Appendix~\ref{language} hold. As a justification of this approach see the proof of Corollary~\ref{3.3} below.

In a compact closed category with biproducts, tensor distributes over $\oplus$, i.e. there exist \textit{distributivity isomorphisms} $\tau_{a,b,c}\colon a\otimes (b\oplus c)\to (a\otimes b)\oplus (a\otimes c)$ and $\upsilon_{a,b,c}\colon (a\oplus b)\otimes c\to (a\otimes c)\oplus (b\otimes c)$ explicitly given by
\begin{align}
\tau_{a,b,c}&=\langle\mj_a\otimes\pi^{1}_{b,c}, \mj_a\otimes\pi^{2}_{b,c}\rangle, \qquad \tau^{-1}_{a,b,c}=[\mj_a\otimes \iota^1_{b,c}, \mj_a\otimes\iota^2_{b,c}], \label{DistTau} \\
\upsilon_{a,b,c}&=\langle\pi^1_{a,b}\otimes \mj_c,\pi^2_{a,b}\otimes \mj_c\rangle, \qquad \upsilon^{-1}_{a,b,c}=[\iota^1_{a,b}\otimes \mj_{c}, \iota^2_{a,b}\otimes \mj_{c}]. \label{DistUpsilon}
\end{align}
(We are aware that it is hard to distinguish between the Latin letter $v$, which is reserved for the isomorphism from $I^\ast$ to $I$ and the Greek letter $\upsilon$ denoting the isomorphism of the form $(a\oplus b)\otimes c\to (a\otimes c)\oplus (b\otimes c)$, but we decided to follow the notation from \cite{KL80} relevant for the strict compact closed structure, and from \cite{AC04} which is relevant for categorical quantum protocols.)

In a compact closed categories with biproducts, the \emph{scalars}, i.e.\ the endomorphisms from $I$ to $I$ form a commutative semiring ${\rm Hom}\,(I,I)$. The multiplication in this semiring is given by composition, for which $\mj_I$ is the neutral, and the addition is defined as above. (We will omit $\circ$ when we compose, i.e.\ multiply, scalars.) For a scalar $s\colon I\to I$ and an object $a$ of such a category, one defines the arrow $s_a\colon a\to a$ as the composition
\[
a\stackrel{\lambda^{-1}_a}{\longrightarrow}I\otimes a \stackrel{s\otimes a}{\longrightarrow}I\otimes a \stackrel{\lambda_a}{\longrightarrow} a,
\]
and the operation $s\bullet$ on arrows such that for $f\colon a\to b$, the arrow $s\bullet f$ is $f\circ s_a$. It is straightforward to check that this new operation satisfies the following equalities.
\begin{equation}\label{41}
   a\otimes (s\bullet f)=s\bullet (a\otimes f),\quad  (s\bullet f)\otimes a=s\bullet (f\otimes a),
\end{equation}
\begin{equation}\label{42}
   (s_2\bullet f_2)\circ(s_1\bullet f_1)=s_2s_1\bullet(f_2\circ f_1),
\end{equation}
\begin{equation}\label{43}
   \langle s\bullet f_1,\ldots, s\bullet f_n\rangle = s\bullet\langle f_1,\ldots,f_n\rangle.
\end{equation}

\begin{example}
As a paradigm for dagger compact closed category with dagger biproducts we use the category $\fdhilb$ of finite dimensional Hilbert spaces over the field $\mathbb{C}$ of complex numbers. The objects of this category are finite dimensional Hilbert spaces (finite dimensional vector spaces with inner product). The arrows of this category correspond to (bounded) linear maps between vector spaces. Dagger is given by the adjoint map. Since every vector space over $\mathbb{C}$ of dimension $n$ is isomorphic to $\mathbb{C}^n$, we can pass from $\fdhilb$ to its skeleton consisting of objects of the form  $\mathbb{C}^n$. By choosing orthogonal bases of such objects, the linear maps are envisaged as matrices. In this case, dagger corresponds to the usual adjoint of matrices (conjugation and transposition), and the operation $^*$ on arrows corresponds to the complex conjugation of an operator (matrix). Also, the operation $_*$ on arrows is given by a matrix transpose.
\end{example}

\section{The category $1\cob$}\label{jedancob}
The category $1\cob$ of 1-dimensional cobordisms has as objects closed oriented 0-dimensional manifolds i.e. finite (possibly empty) sequences of points together with their orientation (either $\ps$ or $\ms$). For example, an object of $1\cob$ is $\ps\ps\ms\ps\ms\ms$. Since there will be several roles of $\emptyset$ in this paper, we denote the empty sequence of points by $o$.

A compact oriented 1-dimensional topological manifold with boundary, i.e.\ a finite collection of oriented circles and line segments is called here 1-\emph{manifold}. For objects $a$ and $b$ of $1\cob$, a 1-\emph{cobordism} from $a$ to $b$ is a triple $(M,f_0\colon a\to M, f_1\colon b\to M)$, where $M$ is a 1-manifold and $f_0$, $f_1$ are embeddings. The boundary of $M$ is $\Sigma_0\coprod \Sigma_1$ and its orientation is induced from the orientation of $M$ (the initial point of an oriented segment is $\ps$ while the terminal is $\ms$). The embedding $f_0$ is orientation preserving and its image is $\Sigma_0$, while the embedding $f_1$ is orientation reversing and its image is $\Sigma_1$. Two cobordisms $(M,f_0,f_1)$ and $(M',f'_0,f'_1)$ from $a$ to $b$ are
\emph{equivalent}, when there is an
orientation preserving homeomorphism $F:M\to M'$ such that the
following diagram commutes.
\begin{center}
\begin{picture}(120,60)

\put(0,30){\makebox(0,0){$a$}}
\put(60,55){\makebox(0,0){$M$}} \put(60,5){\makebox(0,0){$M'$}}
\put(120,30){\makebox(0,0){$b$}}
\put(25,50){\makebox(0,0){$f_0$}}
\put(95,50){\makebox(0,0){$f_1$}}
\put(25,10){\makebox(0,0){$f'_0$}}
\put(95,10){\makebox(0,0){$f'_1$}} \put(67,30){\makebox(0,0){$F$}}

\put(10,35){\vector(2,1){40}} \put(10,25){\vector(2,-1){40}}
\put(110,35){\vector(-2,1){40}} \put(110,25){\vector(-2,-1){40}}
\put(60,45){\vector(0,-1){30}}
\end{picture}
\end{center}

The equivalence classes of 1-cobordisms are the \emph{arrows} of $1\cob$. The identity $\mj_a\colon a\to a$ is the equivalence class of $(a\times I, x\mapsto (x,0), x\mapsto (x,1))$, which in the case $a=o$ stands for the empty 1-cobordism from the empty sequence of oriented points $o$ to itself. Two cobordisms $(M,f_0,f_1)\colon a\to b$ and $(N,g_0,g_1)\colon b\to c$ are composed by ``gluing'', i.e.\ by making the pushout of $M\stackrel{f_1}{\longleftarrow} b\stackrel{g_0}{\longrightarrow} N$.
All the arrows of $1\cob$ are illustrated so that the source of an arrow is at the top, while its target is at the bottom of the picture. Therefore,the direction of pictures is \emph{top to bottom}, a convention used in \cite{PZ21}. Note that some authors use a different convection, \emph{left to right}, or \emph{bottom to top} \cite{K03,T10}. The latter is presumably the most popular in the physics literature.

The category $1\cob$ is dagger strict compact closed. We have symmetric monoidal structure on $1\cob$ in which $\otimes$ on objects is defined by concatenation, the empty sequence $o$ is the neutral and serves as the unit object $I$, while $\otimes$ on arrows is given by putting two cobordisms ``side by side''. The arrows $\alpha$ and $\lambda$ are identities and symmetry $\sigma$ is generated by transpositions $\ps\ps\to\ps\ps$, $\ps\ms\to\ms\ps$, $\ms\ps\to\ps\ms$ and $\ms\ms\to\ms\ms$. These transpositions are illustrated as follows:
\begin{center}
\begin{tikzpicture}[scale=0.75][line cap=round,line join=round,x=1.0cm,y=1.0cm]
\draw [line width=1pt](-2.0,3.0)-- (0.0,1.0);
\draw [line width=1pt](0.0,3.0)-- (-2.0,1.0);
\draw (-2.3,3.6) node[anchor=north west] {$\ps$};
\draw (-0.3,3.6) node[anchor=north west] {$\ps$};
\draw (-2.3,1) node[anchor=north west] {$\ps$};
\draw (-0.3,1) node[anchor=north west] {$\ps$};
\draw [->,>=stealth,line width=1.5pt] (-1.8620810895562505,1.1379189104437495) -- (-2.0,1.0);
\draw [->,>=stealth,line width=1.5pt] (-0.19407634363052395,1.194076343630524) -- (0.0,1.0);
\draw [line width=1pt](2.0,3.0)-- (4.0,1.0);
\draw [line width=1pt](4.0,3.0)-- (2.0,1.0);
\draw (1.7,3.6) node[anchor=north west] {$\ps$};
\draw (3.7,1) node[anchor=north west] {$\ps$};
\draw (3.7,3.6) node[anchor=north west] {$\ms$};
\draw (1.7,1) node[anchor=north west] {$\ms$};
\draw [->,>=stealth,line width=1.5pt] (3.7833534620239813,1.2166465379760187) -- (4.0,1.0);
\draw [->,>=stealth,line width=1.5pt] (3.836346125287993,2.836346125287993) -- (4.0,3.0);
\draw [line width=1pt](6.0,3.0)-- (8.0,1.0);
\draw [line width=1pt](8.0,3.0)-- (6.0,1.0);
\draw (5.7,1) node[anchor=north west] {$\ps$};
\draw (7.7,1) node[anchor=north west] {$\ms$};
\draw (5.7,3.6) node[anchor=north west] {$\ms$};
\draw (7.7,3.6) node[anchor=north west] {$\ps$};
\draw [->,>=stealth,line width=1.5pt] (6.128031727527586,1.1280317275275857) -- (6.0,1.0);
\draw [->,>=stealth,line width=1.5pt] (6.113497106927031,2.8865028930729686) -- (6.0,3.0);
\draw [line width=1pt](10.0,3.0)-- (12.0,1.0);
\draw [line width=1pt](12.0,3.0)-- (10.0,1.0);
\draw (9.7,1) node[anchor=north west] {$\ms$};
\draw (11.7,1) node[anchor=north west] {$\ms$};
\draw (9.7,3.6) node[anchor=north west] {$\ms$};
\draw (11.7,3.6) node[anchor=north west] {$\ms$};
\draw [->,>=stealth,line width=1.5pt] (10.106951097024632,2.8930489029753668) -- (10.0,3.0);
\draw [->,>=stealth,line width=1.5pt] (11.874531495701138,2.874531495701137) -- (12.0,3.0);
\begin{scriptsize}
\draw [fill=black] (-2.0,3.0) circle (2pt);
\draw [fill=black] (0.0,1.0) circle (2pt);
\draw [fill=black] (0.0,3.0) circle (2pt);
\draw [fill=black] (-2.0,1.0) circle (2pt);
\draw [fill=black] (2.0,3.0) circle (2pt);
\draw [fill=black] (4.0,1.0) circle (2pt);
\draw [fill=black] (4.0,3.0) circle (2pt);
\draw [fill=black] (2.0,1.0) circle (2pt);
\draw [fill=black] (6.0,3.0) circle (2pt);
\draw [fill=black] (8.0,1.0) circle (2pt);
\draw [fill=black] (8.0,3.0) circle (2pt);
\draw [fill=black] (6.0,1.0) circle (2pt);
\draw [fill=black] (10.0,3.0) circle (2pt);
\draw [fill=black] (12.0,1.0) circle (2pt);
\draw [fill=black] (12.0,3.0) circle (2pt);
\draw [fill=black] (10.0,1.0) circle (2pt);
\end{scriptsize}
\end{tikzpicture}
\end{center}
For example, the transposition $\ps\ms\to\ms\ps$ is a cobordism given by the manifold consisting of two oriented segments and two embeddings of the source $\ps\ms$ and the target $\ms\ps$ into its boundary (when a sign is mapped to the same sign, then the boundary point belongs to the source, and it is mapped to the opposite sign, then the boundary point belongs to the target of the cobordism).
\begin{center}
\begin{tikzpicture}[scale=0.7][line cap=round,line join=round,x=1.0cm,y=1.0cm]
\draw [line width=1pt](3.0,2.0)-- (3.0,0.0);
\draw [line width=1pt](5.0,2.0)-- (5.0,0.0);
\draw (2.7,2.6) node[anchor=north west] {$\ps$};
\draw (2.7,0) node[anchor=north west] {$\ms$};
\draw (4.7,0) node[anchor=north west] {$\ms$};
\draw (4.7,2.6) node[anchor=north west] {$\ps$};
\draw (2.7,3.6) node[anchor=north west] {$\ps$};
\draw (4.7,-1) node[anchor=north west] {$\ps$};
\draw (4.7,3.6) node[anchor=north west] {$\ms$};
\draw (2.7,-1) node[anchor=north west] {$\ms$};
\draw [shift={(3.027329841392542,2.501248065771235)}] plot[domain=3.6998468284590094:5.831693459929932,variable=\t]({-0.06663264376766559*0.5020183627985413*cos(\t r)+0.9977775758076203*0.3862846611334679*sin(\t r)},{-0.9977775758076203*0.5020183627985413*cos(\t r)+-0.06663264376766559*0.3862846611334679*sin(\t r)});
\draw [->,>=stealth,line width=0.7pt] (2.79,2.1160944273770226) -- (2.829066226905219,2.061767465840249);
\draw [shift={(1.8764548840937023,1.3418707322136805)}] plot[domain=2.4794310746680344:3.732998122805218,variable=\t]({-0.9972122563213643*4.092701151897252*cos(\t r)+0.0746171283449963*2.323155713193134*sin(\t r)},{-0.0746171283449963*4.092701151897252*cos(\t r)+-0.9972122563213643*2.323155713193134*sin(\t r)});
\draw [->,>=stealth,line width=0.7pt] (5.287621606183547,0.2340120686840863) -- (5.201805054089697,0.15836983701026686);
\draw [shift={(3.4946448612271457,-1.143013924723535)}] plot[domain=0.8659535543462812:1.9529583990254618,variable=\t]({0.9354274346052336*1.7893453813038553*cos(\t r)+0.3535187612954524*1.0222905047111188*sin(\t r)},{-0.3535187612954524*1.7893453813038553*cos(\t r)+0.9354274346052336*1.0222905047111188*sin(\t r)});
\draw [->,>=stealth,line width=0.7pt] (3.327272885622013,-0.03917712782000127) -- (3.2057656235563297,-0.01981913777619937);
\draw [shift={(3.453880241451742,1.6752339758056352)}] plot[domain=3.8372305438174075:4.920515525749326,variable=\t]({0.41929175490600235*3.0959978396251553*cos(\t r)+-0.9078515430773055*1.3307209979786425*sin(\t r)},{0.9078515430773055*3.0959978396251553*cos(\t r)+0.41929175490600235*1.3307209979786425*sin(\t r)});
\draw [->,>=stealth,line width=0.7pt] (4.868933944114557,1.6135375732301827) -- (4.9041345676088675,1.7100830685754715);
\draw [->,>=stealth,line width=1.5pt] (3.0,0.09737233802378534) -- (3.0,0.0);
\draw [->,>=stealth,line width=1.5pt] (5.0,0.13899264361832075) -- (5.0,0.0);
\begin{scriptsize}
\draw [fill=black] (3.0,2.0) circle (2pt);
\draw [fill=black] (3.0,0.0) circle (2pt);
\draw [fill=black] (5.0,2.0) circle (2pt);
\draw [fill=black] (5.0,0.0) circle (2pt);
\draw [fill=black] (3.0,3.0) circle (2pt);
\draw [fill=black] (5.0,3.0) circle (2pt);
\draw [fill=black] (3.0,-1.0) circle (2pt);
\draw [fill=black] (5.0,-1.0) circle (2pt);
\end{scriptsize}
\end{tikzpicture}
\end{center}

The dual $a^{\ast}$ of an object $a$ is the reversed sequence of points with reversed orientation. For example, if $a=\ps\ms\ms$, then $a^{\ast}=\ps\ps\ms$. (Note that this definition differs from the one given in \cite{PZ21} where just the orientation was reversed---both definitions are correct in presence of symmetry.) The arrows $\eta\colon o\to a^\ast\otimes a$ and $\varepsilon\colon a\otimes a^\ast\to o$, for $a$ as above are the following cobordisms:
\begin{center}	
\begin{tikzpicture}[scale=0.6] join=round,>=triangle 45,x=1.0cm,y=1.0cm]
\draw [line width=1pt][shift={(-4.5,0.0)}] plot[domain=0.0:3.141592653589793,variable=\t]({1.0*0.5*cos(\t r)+-0.0*0.5*sin(\t r)},{0.0*0.5*cos(\t r)+1.0*0.5*sin(\t r)});
\draw [line width=1pt][shift={(-4.5,0.0)}] plot[domain=0.0:3.141592653589793,variable=\t]({1.0*1.5*cos(\t r)+-0.0*1.5*sin(\t r)},{0.0*1.5*cos(\t r)+1.0*1.5*sin(\t r)});
\draw [line width=1pt][shift={(-4.5,0.0)}] plot[domain=0.0:3.141592653589793,variable=\t]({1.0*2.5*cos(\t r)+-0.0*2.5*sin(\t r)},{0.0*2.5*cos(\t r)+1.0*2.5*sin(\t r)});
\draw (-7.4,0) node[anchor=north west] {$\ps$};
\draw (-6.4,0) node[anchor=north west] {$\ps$};
\draw (-4.4,0) node[anchor=north west] {$\ps$};
\draw (-5.4,0) node[anchor=north west] {$\ms$};
\draw (-3.4,0) node[anchor=north west] {$\ms$};
\draw (-2.4,0) node[anchor=north west] {$\ms$};
\draw [->,>=stealth,line width=1.5pt] (-6.9894948221731,0.2289443827075123) -- (-7.0,0.0);
\draw [->,>=stealth,line width=1.5pt] (-5.986167605641045,0.20323840174328178) -- (-6.0,0.0);
\draw [->,>=stealth,line width=1.5pt] (-4.03,0.08985219229327004) -- (-4.0,0.0);
\draw [line width=1pt][shift={(4.498549448749316,2.48908093738758)}] plot[domain=3.141592653589793:6.283185307179586,variable=\t]({1.0*0.5000000000000004*cos(\t r)+-0.0*0.5000000000000004*sin(\t r)},{0.0*0.5000000000000004*cos(\t r)+1.0*0.5000000000000004*sin(\t r)});
\draw [line width=1pt][shift={(4.498549448749316,2.48908093738758)}] plot[domain=3.141592653589793:6.283185307179586,variable=\t]({1.0*1.5*cos(\t r)+-0.0*1.5*sin(\t r)},{0.0*1.5*cos(\t r)+1.0*1.5*sin(\t r)});
\draw [line width=1pt][shift={(4.498549448749316,2.48908093738758)}] plot[domain=3.141592653589793:6.283185307179586,variable=\t]({1.0*2.5000000000000004*cos(\t r)+-0.0*2.5000000000000004*sin(\t r)},{0.0*2.5000000000000004*cos(\t r)+1.0*2.5000000000000004*sin(\t r)});
\draw (1.55,3.2) node[anchor=north west] {$\ps$};
\draw (4.55,3.2) node[anchor=north west] {$\ps$};
\draw (5.55,3.2) node[anchor=north west] {$\ps$};
\draw (2.55,3.2) node[anchor=north west] {$\ms$};
\draw (3.55,3.2) node[anchor=north west] {$\ms$};
\draw (6.55,3.2) node[anchor=north west] {$\ms$};
\draw [->,>=stealth,line width=1.5pt] (3.003901328197095,2.3624831659628196) -- (2.9985494487493156,2.48908093738758);
\draw [->,>=stealth,line width=1.5pt] (4.02,2.412304051157032) -- (3.998549448749315,2.48908093738758);
\draw [->,>=stealth,line width=1.5pt] (6.996052909003227,2.377382835256246) -- (6.998549448749316,2.48908093738758);
\draw (-4.8,3.2) node[anchor=north west] {$o$};
\draw (4.3,0) node[anchor=north west] {$o$};
\begin{scriptsize}
\draw [fill=black] (-7.0,0.0) circle (2pt);
\draw [fill=black] (-6.0,0.0) circle (2pt);
\draw [fill=black] (-5.0,0.0) circle (2pt);
\draw [fill=black] (-4.0,0.0) circle (2pt);
\draw [fill=black] (-3.0,0.0) circle (2pt);
\draw [fill=black] (-2.0,0.0) circle (2pt);
\draw [fill=black] (1.9985494487493152,2.48908093738758) circle (2pt);
\draw [fill=black] (2.9985494487493156,2.48908093738758) circle (2pt);
\draw [fill=black] (3.998549448749315,2.48908093738758) circle (2pt);
\draw [fill=black] (4.998549448749316,2.48908093738758) circle (2pt);
\draw [fill=black] (5.998549448749316,2.48908093738758) circle (2pt);
\draw [fill=black] (6.998549448749316,2.48908093738758) circle (2pt);
\end{scriptsize}
\end{tikzpicture}
\end{center}
It is not difficult to check that the arrows $u_{a,b}$, $v$ and $w_{a}$ are all identities.

Let $f\colon a\to b$ be an arrow of $1\cob$ represented by a triple $(M,f_0\colon a\to M, f_1\colon b\to M)$. Its name $\ulcorner f \urcorner\colon o\to a^\ast\otimes b$ is represented by the triple $(M,g_0\colon o\to M, g_1\colon a^\ast\otimes b\to M)$, where for every point $x$ of $a$ and the corresponding point $\bar{x}$ of $a^\ast$ we have $g_1(\bar{x})=f_0(x)$ and for every point $y$ of $b$ we have $g_1(y)=f_1(y)$. The coname $\llcorner f \lrcorner$ of $f$ is defined in $1\cob$ analogously.
\begin{center}
\begin{tikzpicture}[scale=0.4,line cap=round,line join=round,x=1.0cm,y=1.0cm]
\draw [line width=1pt][shift={(0.0,0.0)}] plot[domain=0.0:3.141592653589793,variable=\t]({1.0*1.0*cos(\t r)+-0.0*1.0*sin(\t r)},{0.0*1.0*cos(\t r)+1.0*1.0*sin(\t r)});
\draw [line width=1pt](3.0,0.0)-- (1.0,3.0);
\draw [line width=1pt][shift={(11.0,0.0)}] plot[domain=0.0:3.141592653589793,variable=\t]({1.0*3.0*cos(\t r)+-0.0*3.0*sin(\t r)},{0.0*3.0*cos(\t r)+1.0*3.0*sin(\t r)});
\draw [line width=1pt][shift={(11.0,0.0)}] plot[domain=0.0:3.141592653589793,variable=\t]({1.0*1.0*cos(\t r)+-0.0*1.0*sin(\t r)},{0.0*1.0*cos(\t r)+1.0*1.0*sin(\t r)});

\draw [line width=1pt][shift={(20.0,3.0)}] plot[domain=3.141592653589793:6.283185307179586,variable=\t]({1.0*1.0*cos(\t r)+-0.0*1.0*sin(\t r)},{0.0*1.0*cos(\t r)+1.0*1.0*sin(\t r)});

\draw [line width=1pt][shift={(24.0,3.0)}] plot[domain=3.141592653589793:6.283185307179586,variable=\t]({1.0*1.0*cos(\t r)+-0.0*1.0*sin(\t r)},{0.0*1.0*cos(\t r)+1.0*1.0*sin(\t r)});
\draw (-1.5,0) node[anchor=north west] {$\ps$};
\draw (0.5,0) node[anchor=north west] {$\ms$};
\draw (2.5,0) node[anchor=north west] {$\ps$};
\draw (0.4,4) node[anchor=north west] {$\ps$};
\draw (7.5,0) node[anchor=north west] {$\ms$};
\draw (9.5,0) node[anchor=north west] {$\ps$};
\draw (11.5,0) node[anchor=north west] {$\ms$};
\draw (20.5,4) node[anchor=north west] {$\ms$};
\draw (24.5,4) node[anchor=north west] {$\ms$};
\draw (13.5,0) node[anchor=north west] {$\ps$};
\draw (18.5,4) node[anchor=north west] {$\ps$};
\draw (22.5,4) node[anchor=north west] {$\ps$};
\draw (-3,2) node[anchor=north west] {$f\colon$};
\draw (5.5,2) node[anchor=north west] {$\ulcorner f \urcorner\colon$};
\draw (16.5,2) node[anchor=north west] {$\llcorner f \lrcorner\colon$};
\draw (10.5,4) node[anchor=north west] {$o$};
\draw (21.5,0) node[anchor=north west] {$o$};
\draw [->,>=stealth][line width=1.3pt] (13.99078715835982,0.23493014576675697) -- (14.0,0.0);
\draw [->,>=stealth][line width=1.3pt] (10.03,0.1557154478588668) -- (10.0,0.0);
\draw [->,>=stealth][line width=1.3pt] (24.976,2.8828994573309963) -- (25.0,3.0);
\draw [->,>=stealth][line width=1.3pt] (20.976,2.883124910029953) -- (21.0,3.0);
\draw [->,>=stealth][line width=1.3pt] (-0.97,0.14968431428115275) -- (-1.0,0.0);
\draw [->,>=stealth][line width=1.3pt] (2.9023037670756007,0.1465443493865992) -- (3.0,0.0);
\draw [fill=black] (-1.0,0.0) circle (3pt);
\draw [fill=black] (1.0,0.0) circle (3pt);
\draw [fill=black] (3.0,0.0) circle (3pt);	
\draw [fill=black] (1.0,3.0) circle (3pt);
\draw [fill=black] (8.0,0.0) circle (3pt);
\draw [fill=black] (14.0,0.0) circle (3pt);
\draw [fill=black] (10.0,0.0) circle (3pt);
\draw [fill=black] (12.0,0.0) circle (3pt);
\draw [fill=black] (19.0,3.0) circle (3pt);
\draw [fill=black] (25.0,3.0) circle (3pt);
\draw [fill=black] (23.0,3.0) circle (3pt);
\draw [fill=black] (21.0,3.0) circle (3pt);
\end{tikzpicture}
\end{center}
The arrow $f^\ast\colon b^\ast\to a^\ast$ is represented by the triple $(M,h_0\colon b^\ast\to M, h_1\colon a^\ast\to M)$, where for every point $x$ of $a$ and the corresponding point $\bar{x}$ of $a^\ast$ we have $h_1(\bar{x})=f_0(x)$, and for every point $y$ of $b$ and the corresponding point $\bar{y}$ of $b^\ast$ we have $h_0(\bar{y})=f_1(y)$.

The cobordism $f^\dagger\colon b\to a$ is obtained by reversing the orientation of the 1-manifold representing the cobordism $f\colon a\to b$. It is not hard to check that the equalities~\ref{28}-\ref{30} of Appendix~\ref{language} hold.
\begin{center}
\begin{tikzpicture}[scale=0.4,line cap=round,line join=round,x=1.0cm,y=1.0cm]
\draw [line width =1pt][shift={(2.0,-2.0)}] plot[domain=3.141592653589793:6.283185307179586,variable=\t]({1.0*1.0*cos(\t r)+-0.0*1.0*sin(\t r)},{0.0*1.0*cos(\t r)+1.0*1.0*sin(\t r)});
\draw [line width =1pt](-1.0,-2.0)-- (1.0,-5.0);
\draw [line width =1pt][shift={(10.0,-2.0)}] plot[domain=3.141592653589793:6.283185307179586,variable=\t]({1.0*1.0*cos(\t r)+-0.0*1.0*sin(\t r)},{0.0*1.0*cos(\t r)+1.0*1.0*sin(\t r)});
\draw [line width =1pt](13.0,-2.0)-- (11.0,-5.0);
\draw [line width =1pt](21.0,-2.0)-- (19.0,-5.0);
\draw [line width =1pt][shift={(22.0,-5.0)}] plot[domain=0.0:3.141592653589793,variable=\t]({1.0*1.0*cos(\t r)+-0.0*1.0*sin(\t r)},{0.0*1.0*cos(\t r)+1.0*1.0*sin(\t r)});
\draw (-1.6,-1) node[anchor=north west] {$\ms$};
\draw (2.4,-1) node[anchor=north west] {$\ms$};
\draw (0.6,-5) node[anchor=north west] {$\ms$};
\draw (10.4,-1) node[anchor=north west] {$\ms$};
\draw (20.5,-1) node[anchor=north west] {$\ms$};
\draw (18.4,-5) node[anchor=north west] {$\ms$};
\draw (22.5,-5) node[anchor=north west] {$\ms$};
\draw (10.4,-5) node[anchor=north west] {$\ps$};
\draw (0.4,-1) node[anchor=north west] {$\ps$};
\draw (8.4,-1) node[anchor=north west] {$\ps$};
\draw (12.4,-1) node[anchor=north west] {$\ps$};
\draw (20.5,-5) node[anchor=north west] {$\ps$};
\draw [->,>=stealth][line width=1.3pt] (-0.8548899609460593,-2.217665058580911) -- (-1.0,-2.0);
\draw [->,>=stealth][line width=1.3pt] (2.988,-2.05411) -- (3,-2);
\draw [->,>=stealth][line width=1.3pt] (10.975,-2.1132417510522834) -- (11.0,-2.0);
\draw [->,>=stealth][line width=1.3pt] (11.080873787907723,-4.878689318138414) -- (11.0,-5.0);
\draw [->,>=stealth][line width=1.3pt] (20.9211413589203,-2.118287961619552) -- (21.0,-2.0);
\draw [->,>=stealth][line width=1.3pt] (21.03,-4.872511662323953) -- (21.0,-5.0);
\draw (-3,-3) node[anchor=north west] {$f^\ast\colon$};
\draw (7,-3) node[anchor=north west] {$f^\dagger\colon$};
\draw (17,-3) node[anchor=north west] {$f_\ast\colon$};
\draw [fill=black] (3.0,-2.0) circle (3pt);
\draw [fill=black] (1.0,-2.0) circle (3pt);
\draw [fill=black] (-1.0,-2.0) circle (3pt);
\draw [fill=black] (1.0,-5.0) circle (3pt);
\draw [fill=black] (11.0,-2.0) circle (3pt);
\draw [fill=black] (9.0,-2.0) circle (3pt);
\draw [fill=black] (13.0,-2.0) circle (3pt);
\draw [fill=black] (11.0,-5.0) circle (3pt);
\draw [fill=black] (19.0,-5.0) circle (3pt);
\draw [fill=black] (21.0,-5.0) circle (3pt);
\draw [fill=black] (23.0,-5.0) circle (3pt);
\draw [fill=black] (21.0,-2.0) circle (3pt);
\end{tikzpicture}
\end{center}

By the above definitions of $f^\ast$ and $f^\dagger$ for a cobordism $f\colon a\to b$ it is straightforward to reconstruct the cobordism $f_\ast=(f^\dagger)^\ast\colon a^\ast\to b^\ast$.

\section{A pair of free categories}\label{free category}

We start with a construction of a dagger compact closed category $\mathcal{F}^\dagger$ with dagger biproducts freely generated by a single object $p$ and a set $\Gamma$ of unitary endomorphisms on this object. An arrow $f\colon a\to b$ in a dagger category is \emph{unitary} when $f^\dagger\colon b\to a$ is its both-sided inverse. The universal property of $\mathcal{F}^\dagger$ is the following: for every dagger compact closed category $\mathcal{C}$ with dagger biproducts, and a function $\varphi$ from the set $\Gamma$ to the set of unitary endomorphisms of an object $c$ of $\mathcal{C}$, there exists a unique functor $F\colon \mathcal{F}^\dagger \to \mathcal{C}$ strictly preserving the whole structure, such that $Fp=c$ and for every $\gamma\in \Gamma$, $F\gamma=\varphi(\gamma)$.

Our construction of this category is syntactical; it is akin to the construction of the category $F\mathcal{A}$ from \cite[Sections 3-4]{KL80}, and it follows the construction of the category $\mathcal{F}_P$ from \cite[Section 4]{PZ21}. As noted in \cite{KL80}, it is ``perfectly general, applying to categories with any explicitly-given equational extra structure''. The \emph{objects} of $\mathcal{F}^\dagger$ are the formulae built out of a single letter $p$ and the constants $I$ and $0$, with the help of one unary connective $\ast$ (written as a superscript) and two binary connectives $\otimes$ and $\oplus$.

The arrows of $\mathcal{F}^\dagger$ are obtained as equivalence classes of \emph{terms} built in the following manner. We start with \emph{primitive terms}, which are of the form $\gamma$, $\gamma^{-1}$ for every $\gamma\in\Gamma$, or $\mj_a$, $\alpha_{a,b,c}$, $\alpha^{-1}_{a,b,c}$, $\lambda_a$, $\lambda^{-1}_a$, $\sigma_{a,b}$, $\eta_a$, $\varepsilon_a$, $\pi^1_{a,b}$, $\pi^2_{a,b}$, $\iota^1_{a,b}$, $\iota^2_{a,b}$ and $0_{a,b}$, for all objects $a$, $b$ and $c$ of  $\mathcal{F}^\dagger$. The \emph{terms} are built out of primitive terms with the help of one unary operational symbol $\dagger$ and four binary operational symbols $\otimes$, $\oplus$, $+$ and $\circ$. (Each such term is equipped with the source and the target, which are objects of $\mathcal{F}^\dagger$, e.g.\ the source and the target of every $\gamma\in\Gamma$ is $p$, and constructions of terms with $+$ and $\circ$ are restricted to appropriate sources and targets.) The equivalence classes of these terms, i.e.\ the \emph{arrows} of $\mathcal{F}^\dagger$, are obtained modulo the congruence generated by the equalities \ref{1}-\ref{31} of Appendix~\ref{language} and, for every $\gamma\in\Gamma$, the equalities \ref{33}-\ref{34} below.
\begin{equation}\label{33}
   \gamma\circ\gamma^{-1}=\mj_p=\gamma^{-1}\circ\gamma,
\end{equation}
\begin{equation}\label{34}
   \gamma^\dagger=\gamma^{-1}.
\end{equation}

On the other hand, consider the category $\mathcal{F}$ with the same objects and the same primitive terms as $\mathcal{F}^\dagger$, just the terms of $\mathcal{F}$ are constructed without the unary operational symbol $\dagger$. The \emph{arrows} of $\mathcal{F}$, are the equivalence classes of these terms, modulo the congruence generated by the equalities \ref{1}-\ref{22} and \ref{33}. The category $\mathcal{F}$ is a compact closed category with biproducts freely generated by the group (envisaged as a category with one object) freely generated by the set $\Gamma$. The universal property of $\mathcal{F}$ is the following: for every compact closed category $\mathcal{C}$ with biproducts, and a function $\varphi$ from the set $\Gamma$ to the set of automorphisms of an object $c$ of $\mathcal{C}$, there exists a unique functor $F\colon \mathcal{F} \to \mathcal{C}$ strictly preserving the whole structure, such that $Fp=c$ and for every $\gamma\in \Gamma$, $F\gamma=\varphi(\gamma)$.

\begin{prop}\label{free-iso}
The categories $\mathcal{F}^\dagger$ and $\mathcal{F}$ are isomorphic.
\end{prop}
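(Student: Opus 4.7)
The plan is to exhibit an isomorphism by constructing mutually inverse structure-preserving functors via the universal properties of the two categories.

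For the forward direction, note that $\mathcal{F}^\dagger$ is, in particular, a compact closed category with biproducts, and each $\gamma \in \Gamma$ is an automorphism of $p$ in $\mathcal{F}^\dagger$ (since it is unitary, its dagger serves as a two-sided inverse). The universal property of $\mathcal{F}$ therefore produces a unique structure-preserving functor $F \colon \mathcal{F} \to \mathcal{F}^\dagger$ with $F(p) = p$ and $F(\gamma) = \gamma$.

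For the reverse direction one must first equip $\mathcal{F}$ with a dagger making it a dagger compact closed category with dagger biproducts. The cleanest implementation is to observe that $\mathcal{F}^{\mathrm{op}}$ inherits a compact closed structure with biproducts from $\mathcal{F}$ (with $a^\ast$ unchanged, the roles of $\eta_a$ and $\varepsilon_a$ swapped using $\sigma$, and the roles of $\iota^i$ and $\pi^i$ swapped), and that in $\mathcal{F}^{\mathrm{op}}$ the element $\gamma^{-1}$ is an automorphism of $p$. The universal property of $\mathcal{F}$ then yields a unique structure-preserving functor $\dagger \colon \mathcal{F} \to \mathcal{F}^{\mathrm{op}}$ sending $p$ to $p$ and each $\gamma$ to $\gamma^{-1}$. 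Involutivity $\dagger\dagger = \mj_{\mathcal{F}}$ follows by a uniqueness argument, since both $\dagger \circ \dagger$ and $\mj_{\mathcal{F}}$ are structure-preserving endofunctors of $\mathcal{F}$ fixing $p$ and every $\gamma$. The dagger compact closed and dagger biproduct equations \ref{28}--\ref{31} then hold by construction, because they are exactly the statements that $\dagger$ sends the relevant canonical arrows of $\mathcal{F}$ to the corresponding canonical arrows of $\mathcal{F}^{\mathrm{op}}$. Once this dagger is in place, the universal property of $\mathcal{F}^\dagger$ supplies a unique structure-preserving functor $G \colon \mathcal{F}^\dagger \to \mathcal{F}$ with $G(p) = p$ and $G(\gamma) = \gamma$.

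Finally, $G \circ F$ and $F \circ G$ are both structure-preserving endofunctors (of $\mathcal{F}$ and $\mathcal{F}^\dagger$ respectively) that fix $p$ and every $\gamma \in \Gamma$, so the uniqueness clauses of the two universal properties force them to coincide with the identity functors; hence $F$ and $G$ are mutually inverse, and the proposition follows.

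The step I expect to require the most care is the construction of the dagger on $\mathcal{F}$, specifically the verification that $\mathcal{F}^{\mathrm{op}}$ really carries the compact closed and biproduct structure needed, with the correct interplay between $\eta$, $\varepsilon$ and symmetry, and between $\iota^i$ and $\pi^i$. An equivalent, more syntactic alternative would be to define $\dagger$ by induction on terms (setting $\mj^\dagger = \mj$, $\alpha^\dagger = \alpha^{-1}$, $\lambda^\dagger = \lambda^{-1}$, $\sigma^\dagger = \sigma^{-1}$, $\gamma^\dagger = \gamma^{-1}$, swapping $\pi^i \leftrightarrow \iota^i$, swapping $\eta$ and $\varepsilon$ up to symmetry, and fixing $0_{a,b}^\dagger = 0_{b,a}$) and then checking that every equation among \ref{1}--\ref{22} and \ref{33} is closed under this syntactic $\dagger$; this is routine but tedious and requires examining each equation individually.
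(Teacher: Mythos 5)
Your proof is essentially correct but follows a genuinely different route from the paper's. The paper argues directly on the syntax: using the equalities \ref{4}--\ref{36} (together with \ref{34}), every term of $\mathcal{F}^\dagger$ can be rewritten into a $\dagger$-free term, and every defining equality of $\mathcal{F}^\dagger$ in which $\dagger$ occurs becomes trivial after $\dagger$-elimination on both sides; hence the identity on objects, together with the map induced by the inclusion of $\dagger$-free terms, is already an isomorphism. You instead work at the level of universal properties, which buys a more conceptual argument, independent of the details of the term calculus, at the price of having to construct a dagger on $\mathcal{F}$ (equivalently, to verify that $\mathcal{F}^{\mathrm{op}}$ carries the relevant compact closed structure with biproducts, with $\eta$ and $\varepsilon$ interchanged via $\sigma$ and $\iota^i$ and $\pi^i$ interchanged). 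Note that your ``syntactic alternative'' for this step is in substance the same case-by-case verification that the paper's $\dagger$-elimination implicitly relies on, so the two proofs are less far apart than they first appear.

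One point you should make explicit: to apply the uniqueness clause of the universal property of $\mathcal{F}^\dagger$ to $F\circ G$, you need $F\circ G$ to preserve the dagger. While $G$ does so by construction, $F$ is produced by the universal property of $\mathcal{F}$ and is a priori only known to preserve the compact closed structure with biproducts. This is repaired by the same uniqueness trick you already use for involutivity: the two functors $\dagger_{\mathcal{F}^\dagger}\circ F$ and $F^{\mathrm{op}}\circ\dagger_{\mathcal{F}}$ from $\mathcal{F}$ to $(\mathcal{F}^\dagger)^{\mathrm{op}}$ both strictly preserve the compact closed structure with biproducts, both fix $p$, and both send each $\gamma$ to $\gamma^{-1}$, hence they coincide, i.e.\ $F$ commutes with the daggers. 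With that supplement your argument is complete.
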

\begin{proof}
From the equalities \ref{4}-\ref{36} it follows that every arrow of $\mathcal{F}^\dagger$ (as an equivalence class of terms) contains a $\dagger$-free term. Also, every equality assumed for $\mathcal{F}^\dagger$ in which $\dagger$ appears boils down to the trivial identity after $\dagger$-elimination at both sides. Thus, the identity on objects and the function on arrows that maps the equivalence class of a term in $\mathcal{F}$ to the equivalence class of the same term in $\mathcal{F}^\dagger$ is an isomorphism between these two categories.
\end{proof}

\section{Injections and projections}\label{injekcije}

For the functor $^\ast\colon\mathcal{F}^{\mathrm{op}}\to \mathcal{F}$ defined as in Section~\ref{closed}, the unit $\eta$ and the counit $\varepsilon$ become \textit{dinatural}, i.e. for $f\colon a\to b$ the following equalities hold:
\begin{equation} \label{Jednakost:Diprirodnost}
(a^\ast\otimes f)\circ \eta_a = (f^\ast\otimes b)\circ \eta_b,  \qquad \varepsilon_a\circ (a\otimes f^{\ast}) = \varepsilon_b\circ (f\otimes b^\ast).
\end{equation}
Also, for arrows $f,g:a\to b$ in $\mathcal{F}$, the following equality holds,
\begin{equation} \label{Jedn:ZvKrozPlus}
(f+g)^{\ast}=f^{\ast}+g^{\ast}.
\end{equation}

\begin{definition}\label{proj-inj}
Let $a$ be an object of $\mathcal{F}$. By induction on complexity of $a$, we define two finite sequences ${\rm I}_a=(\iota^0_a,\ldots,\iota^{n-1}_a)$ (the \emph{injections} of $a$) and $\Pi_a=(\pi^0_a,\ldots,\pi^{n-1}_a)$ (the \emph{projections} of $a$) of arrows of $\mathcal{F}$ in the following way. If $a$ is the letter $p$ or either  $I$ or $0$, then $n=1$ and ${\rm I}_a=(\mj_a)=\Pi_a$. Let us assume that ${\rm I}_{a_1}=(\iota^0_1,\ldots,\iota^{n_1-1}_1)$, $\Pi_{a_1}=(\pi^0_1,\ldots,\pi^{n_1-1}_1)$ and ${\rm I}_{a_2}=(\iota^0_2,\ldots,\iota^{n_2-1}_2)$, $\Pi_{a_2}=(\pi^0_2,\ldots,\pi^{n_2-1}_2)$ are already defined. For $\lfloor x\rfloor$ being the \emph{floor} function of a real $x$, i.e. the greatest integer less than or equal to $x$, and $i\text{ mod}\, n$ being the residue of $i$ modulo $n$, we have the following.
\begin{itemize}
\item[$\otimes$] If $a=a_1\otimes a_2$, then $n=n_1\cdot n_2$, and for $0\leq i< n_1\cdot n_2$,
    \[
    \iota^i_a=\iota^{\lfloor i/n_2\rfloor}_1\otimes \iota^{i\text{ mod}\, n_2}_2,\quad\quad \pi^i_a=\pi^{\lfloor i/n_2\rfloor}_1\otimes \pi^{i\text{ mod}\, n_2}_2.
    \]

\item[$\ast$] If $a=a_1^\ast$, then $n=n_1$, and for $0\leq i< n_1$,
    $$\iota^i_a=(\pi^i_1)^\ast, \quad \pi^i_a=(\iota^i_1)^\ast.$$

\item[$\oplus$] If $a=a_1\oplus a_2$, then $n=n_1+ n_2$, and for $0\leq i< n_1+ n_2$, $s_i=\left\lfloor\frac{\min\{i, n_1\}}{n_1}\right\rfloor$
    \[ \iota^i_a= \left\{\begin{array}{ll}
    \iota^1_{a_1,a_2}\circ \iota^i_1, & 0\leq i<n_1,
    \\[1ex]
    \iota^2_{a_1,a_2}\circ \iota^{i-n_1}_2, & \mbox{\rm otherwise}, \end{array} \right.
    =\iota^{1+s_i}_{a_1,a_2}\circ\iota^{i-n_1\cdot s_i}_{1+s_i},
    \]

    \[
    \pi^i_a= \left\{\begin{array}{ll}
    \pi^i_1\circ \pi^1_{a_1,a_2}, & 0\leq i<n_1,
    \\[1ex]
    \pi^{i-n_1}_2\circ \pi^2_{a_1,a_2}, & \mbox{\rm otherwise}. \end{array} \right.
    = \pi^{i-n_1\cdot s_i}_{1+s_i}\circ \pi^{1+s_i}_{a_1,a_2}.
\]
\end{itemize}
\end{definition}

\begin{example} Let $a=(p\oplus I)\oplus 0$ and $b=((p\oplus 0)\oplus p)\otimes (I\oplus p)^{\ast}$. Then $\iota_a^i$ and $\pi_a^i$ for $0\leq i<3$ as well as $\iota_b^j$ and $\pi_b^j$ for $0\leq j<6$ are given in the following tables.
\begin{table}[ht]
\begin{minipage}{0.28\linewidth}\centering
\begin{TAB}[3pt]{|c|c|}{|c|c|c|c|c|c|}
$\iota_{a}^0$ & $\iota_{p\oplus I,0}^1\circ \iota^1_{p,I}$  \\
$\iota_{a}^1$ & $\iota_{p\oplus I,0}^1\circ \iota^2_{p,I}$  \\
$\iota_{a}^2$ & $\iota_{p\oplus I,0}^2$  \\
$\pi_{a}^0$ & $\pi_{p,I}^1\circ \pi^1_{p\oplus I,0}$  \\
$\pi_{a}^1$ & $\pi_{p,I}^2\circ \pi^1_{p\oplus I,0}$  \\
$\pi_{a}^2$ & $\pi_{p\oplus I,0}^2$  \\
\end{TAB}
\end{minipage}
\begin{minipage}{0.71\linewidth}\centering
\begin{TAB}[3pt]{|c|c|c|c|}{|c|c|c|c|c|c|}
$\iota_{b}^0$ & $(\iota_{p\oplus 0,p}^1\circ \iota^1_{p,0})\otimes (\pi_{I,p}^1)^{\ast}$ & $\pi_{b}^0$ & $(\pi_{p,0}^1\circ \pi^1_{p\oplus 0,p})\otimes (\iota_{I,p}^1)^{\ast}$ \\
$\iota_{b}^1$ & $(\iota_{p\oplus 0,p}^1\circ \iota^1_{p,0})\otimes (\pi_{I,p}^2)^{\ast}$ & $\pi_{b}^1$ & $(\pi_{p,0}^1\circ \pi^1_{p\oplus 0,p})\otimes (\iota_{I,p}^2)^{\ast}$ \\
$\iota_{b}^2$ & $(\iota_{p\oplus 0,p}^1\circ \iota^2_{p,0})\otimes (\pi_{I,p}^1)^{\ast}$ & $\pi_{b}^2$ & $(\pi_{p,0}^2 \circ \pi^1_{p\oplus 0,p})\otimes (\iota_{I,p}^1)^{\ast}$ \\
$\iota_{b}^3$ & $(\iota_{p\oplus 0,p}^1\circ \iota^2_{p,0})\otimes (\pi_{I,p}^2)^{\ast}$ & $\pi_{b}^3$ & $(\pi_{p,0}^2 \circ \pi^1_{p\oplus 0,p})\otimes (\iota_{I,p}^2)^{\ast}$ \\
$\iota_{b}^4$ & $\iota^2_{p\oplus 0,p}\otimes (\pi^1_{I,p})^{\ast}$ & $\pi_{b}^4$ & $\pi_{p\oplus 0,p}^2\otimes (\iota_{I,p}^1)^{\ast}$ \\
$\iota_{b}^5$ & $\iota^2_{p\oplus 0,p}\otimes (\pi^2_{I,p})^{\ast}$ & $\pi_{b}^5$ & $\pi_{p\oplus 0,p}^2\otimes (\iota_{I,p}^2)^{\ast}$ \\
\end{TAB}
\end{minipage}
\end{table}
\end{example}

\begin{rem}\label{3.1}
For every $0\leq i<n$, the target of $\iota^i_a$ and the source of $\pi^i_a$ are both equal to $a$, while the source $a^i$ of $\iota^i_a$ is equal to the target of $\pi^i_a$, and $a^i$ is $\oplus$-free. Moreover, if $a$ is $\oplus$-free, then ${\rm I}_a=(\mj_a)=\Pi_a$.
\end{rem}

The following proposition establishes the desired properties of injections and projections.

\begin{prop}\label{Prop:InjProj}
For every object $a$ of $\mathcal{F}$
\[
\pi^j_a\circ\iota^i_a= \left\{
\begin{array}{ll} \mj_{a^i}, & i=j,\\[1ex]
0_{a^i,a^j}, & \mbox{\rm otherwise},
\end{array} \right . \quad\quad\quad \sum_{i=0}^{n-1} \iota^i_a\circ\pi^i_a =\mj_a.
\]
\end{prop}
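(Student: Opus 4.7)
The proof will proceed by induction on the complexity of $a$, with the two equalities proved simultaneously.

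\textbf{Base cases.} When $a$ is $p$, $I$, or $0$, we have $n=1$ and $\iota^0_a=\pi^0_a=\mj_a$, so both equalities reduce to $\mj_a\circ\mj_a=\mj_a$.

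\textbf{Inductive step, tensor case.} Suppose $a=a_1\otimes a_2$ and the claim holds for $a_1$, $a_2$. Using bifunctoriality of $\otimes$, compute
\[
\pi^j_a\circ\iota^i_a=\bigl(\pi^{\lfloor j/n_2\rfloor}_1\circ\iota^{\lfloor i/n_2\rfloor}_1\bigr)\otimes\bigl(\pi^{j\bmod n_2}_2\circ\iota^{i\bmod n_2}_2\bigr).
\]
Since the pair $(\lfloor i/n_2\rfloor,\,i\bmod n_2)$ determines $i$ uniquely, the induction hypothesis gives $\mj_{a^i}$ when $i=j$ and a tensor with at least one zero factor otherwise; since $\otimes$ distributes over $+$, such a tensor is $0_{a^i,a^j}$. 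For the second equality, bifunctoriality and the distributivity of $\otimes$ over $+$ give
\[
\sum_{i=0}^{n_1n_2-1}\iota^i_a\circ\pi^i_a=\Bigl(\sum_{k=0}^{n_1-1}\iota^k_1\circ\pi^k_1\Bigr)\otimes\Bigl(\sum_{\ell=0}^{n_2-1}\iota^\ell_2\circ\pi^\ell_2\Bigr)=\mj_{a_1}\otimes\mj_{a_2}=\mj_a.
\]

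\textbf{Inductive step, dual case.} Suppose $a=a_1^\ast$. Using the contravariant functoriality of $^\ast$,
\[
\pi^j_a\circ\iota^i_a=(\iota^j_1)^\ast\circ(\pi^i_1)^\ast=(\pi^i_1\circ\iota^j_1)^\ast,
\]
which by the inductive hypothesis equals $\mj_{(a_1^i)^\ast}=\mj_{a^i}$ when $i=j$ and $(0_{a_1^j,a_1^i})^\ast=0_{a^i,a^j}$ otherwise (a functor preserves zero morphisms in the presence of biproducts). For the second equality, invoking equation~\ref{Jedn:ZvKrozPlus} to push $^\ast$ through $+$,
\[
\sum_{i=0}^{n-1}\iota^i_a\circ\pi^i_a=\sum_{i=0}^{n-1}(\iota^i_1\circ\pi^i_1)^\ast=\Bigl(\sum_{i=0}^{n-1}\iota^i_1\circ\pi^i_1\Bigr)^{\!\ast}=(\mj_{a_1})^\ast=\mj_a.
\]

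\textbf{Inductive step, biproduct case.} Suppose $a=a_1\oplus a_2$. Split into the four cases according to whether each of $i,j$ lies in $[0,n_1)$ or $[n_1,n_1+n_2)$. In the diagonal cases (both indices on the same side), the outer biproduct projection/injection pair cancels via $\pi^k_{a_1,a_2}\circ\iota^k_{a_1,a_2}=\mj_{a_k}$, reducing $\pi^j_a\circ\iota^i_a$ to $\pi^{j'}_s\circ\iota^{i'}_s$ for the corresponding $s\in\{1,2\}$, and the inductive hypothesis finishes the case. In the off-diagonal cases, the composite $\pi^2_{a_1,a_2}\circ\iota^1_{a_1,a_2}=0$ (or its symmetric version) appears in the middle, which collapses the whole composite to $0_{a^i,a^j}$. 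For the second equality, use bilinearity of $\circ$ over $+$ to pull the outer $\iota^s_{a_1,a_2}$ and $\pi^s_{a_1,a_2}$ outside the sum:
\[
\sum_{i=0}^{n_1+n_2-1}\iota^i_a\circ\pi^i_a=\sum_{s=1}^{2}\iota^s_{a_1,a_2}\circ\Bigl(\sum_{k=0}^{n_s-1}\iota^k_s\circ\pi^k_s\Bigr)\circ\pi^s_{a_1,a_2}=\iota^1_{a_1,a_2}\circ\pi^1_{a_1,a_2}+\iota^2_{a_1,a_2}\circ\pi^2_{a_1,a_2},
\]
which equals $\mj_{a_1\oplus a_2}$ by the biproduct axiom listed among equations~\ref{13}--\ref{14}.

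The only mildly delicate point is the indexing bijection $i\leftrightarrow(\lfloor i/n_2\rfloor,\,i\bmod n_2)$ in the tensor case, which must be handled carefully to reindex the double sum; the rest consists of routine applications of bifunctoriality, functoriality of $^\ast$, bilinearity of $\circ$ and $\otimes$ with respect to $+$, and the standard biproduct identities.
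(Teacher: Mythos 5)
Your proof is correct and follows essentially the same route as the paper's: induction on the complexity of $a$, with the tensor case handled by bifunctoriality and distributivity of $\otimes$ over $+$, the dual case by contravariant functoriality of $^\ast$ together with equation~\ref{Jedn:ZvKrozPlus}, and the biproduct case by cancelling or annihilating the outer $\pi^k_{a_1,a_2}\circ\iota^l_{a_1,a_2}$ pair and pulling the outer injections and projections out of the sum. The only slip is the final citation: the identity $\iota^1_{a_1,a_2}\circ\pi^1_{a_1,a_2}+\iota^2_{a_1,a_2}\circ\pi^2_{a_1,a_2}=\mj_{a_1\oplus a_2}$ is equation~\ref{15}, not one of \ref{13}--\ref{14}.
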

\begin{proof}
We proceed by induction on complexity of $a$. When $a$ is $p,I$ or $0$, all injections and projections are identities, and the claim holds. For the inductive step, we consider the following three cases.

\medskip
\noindent (1) Suppose that $a=a_1\otimes a_2$, where $|I_{a_1}|=|\Pi_{a_1}|=n_1$ and  $|I_{a_2}|=|\Pi_{a_2}|=n_2$. Then we have
\begin{align*}
\pi_a^j\circ \iota_a^i &= (\pi_1^{\lfloor \frac{j}{n_2}\rfloor} \otimes \pi_2^{j\;\mathrm{mod}\;n_2})\circ (\iota_1^{\lfloor \frac{i}{n_2}\rfloor} \otimes \iota_2^{i\;\mathrm{mod}\;n_2}) \\
&=(\pi_1^{\lfloor \frac{j}{n_2}\rfloor} \circ \iota_1^{\lfloor \frac{i}{n_2}\rfloor})\otimes (\pi_2^{j\;\mathrm{mod}\;n_2}\circ \iota_2^{i\;\mathrm{mod}\;n_2}),
\end{align*}
and the first claim follows by the inductive hypothesis. Also, using the inductive hypothesis and the equality \ref{25} of Appendix~\ref{language}, we have
\begin{align*}
\sum_{i=0}^{n-1} \iota_a^i\circ\pi_a^i &=
\sum_{i=0}^{n-1} (\iota_1^{\lfloor \frac{i}{n_2}\rfloor} \otimes \iota_2^{i\;\mathrm{mod}\;n_2}) \circ (\pi_1^{\lfloor \frac{i}{n_2}\rfloor} \otimes \pi_2^{i\;\mathrm{mod}\;n_2}) \\
&= \sum_{i=0}^{n-1} (\iota_1^{\lfloor \frac{i}{n_2}\rfloor} \circ \pi_1^{\lfloor \frac{i}{n_2}\rfloor})\otimes (\iota_2^{i\;\mathrm{mod}\;n_2}\circ \pi_2^{i\;\mathrm{mod}\;n_2}) \\
&=\sum_{k=0}^{n_1-1} \left( (\iota_1^k\circ \pi_1^k) \otimes \sum_{l=0}^{n_2-1}(\iota_2^l\circ \pi_2^l) \right) = \sum_{k=0}^{n_1-1} (\iota_1^k\circ \pi_1^k)\otimes a_2 \\
&= \left(\sum_{k=0}^{n_1-1} (\iota_1^k\circ \pi_1^k)\right) \otimes a_2=\mathbf{1}_{a_1}\otimes a_2=\mathbf{1}_{a_1\otimes a_2}.
\end{align*}

\medskip
\noindent (2) When $a=a_1^\ast$, we have
\begin{align*}
\pi_a^j\circ \iota_a^i &= (\iota_1^j)^{\ast}\circ (\pi_1^i)^{\ast}=(\pi_1^i\circ \iota_1^j)^{\ast},
\end{align*}
and the first claim follows by the inductive hypothesis. Also, using the inductive hypothesis and the equality \ref{Jedn:ZvKrozPlus}, we have
\begin{align*}
\sum_{i=0}^{n-1} \iota_a^i\circ\pi_a^i &= \sum_{i=0}^{n-1} (\pi_1^i)^{\ast}\circ (\iota_1^i)^{\ast}= \sum_{i=0}^{n-1} (\iota_1^i\circ \pi_1^i)^{\ast}= \left(\sum_{i=0}^{n-1} \iota_1^i\circ \pi_1^i\right)^{\ast}=(\mj_{a_1})^{\ast}=\mj_a.
\end{align*}

\medskip
\noindent (3) Suppose that $a=a_1\oplus a_2$, and again $|I_{a_1}|=|\Pi_{a_1}|=n_1$ and  $|I_{a_2}|=|\Pi_{a_2}|=n_2$. We have
$$\pi_a^j\circ\iota_a^i=\pi_{1+s_j}^{j-n_1\cdot s_j}\circ \pi_{a_1,a_2}^{1+s_j}\circ \iota_{a_1,a_2}^{1+s_i}\circ \iota_{1+s_i}^{i-n_1\cdot s_i}.$$
Since $i\neq j$ implies $1+s_i\neq 1+s_j$ or $i-n_1\cdot s_i\neq j-n_1\cdot s_j$, the first claim follows according to the inductive hypothesis. For the second claim, we have
\begin{align*}
\sum_{i=0}^{n-1} \iota_a^i\circ \pi_a^i &= \sum_{i=0}^{n-1} \iota_{a_1,a_2}^{1+s_i}\circ \iota_{1+s_i}^{i-n_1\cdot s_i}\circ \pi_{1+s_i}^{i-n_1\cdot s_i}\circ \pi_{a_1,a_2}^{1+s_i} \\
&= \sum_{i=0}^{n_1-1}\iota_{a_1,a_2}^1\circ \iota_{1}^i\circ \pi_1^i\circ \pi_{a_1,a_2}^1 + \sum_{j=0}^{n_2-1}\iota_{a_1,a_2}^2\circ
\iota_{2}^j\circ \pi_2^j\circ \pi_{a_1,a_2}^2 \\
&= \iota_{a_1,a_2}^1\circ \left(\sum_{i=0}^{n_1-1} \iota_{1}^i\circ \pi_1^i\right )\circ \pi_{a_1,a_2}^1 + \iota_{a_1,a_2}^2\circ \left(\sum_{j=0}^{n_2-1}
\iota_{2}^j\circ \pi_2^j\right) \circ\pi_{a_1,a_2}^2 \\
&= \iota_{a_1,a_2}^1\circ\pi_{a_1,a_2}^1+\iota_{a_1,a_2}^2\circ\pi_{a_1,a_2}^2=
\mathbf{1}_{a_1\oplus a_2}. \qedhere
\end{align*}
\end{proof}

As a corollary of Proposition~\ref{Prop:InjProj} we have the following.

\begin{cor}\label{3.3}
For every object $a$ of $\mathcal{F}$, the cocone $(a,{\rm I}_a)$ together with the cone $(a,\Pi_a)$ make a biproduct.
\end{cor}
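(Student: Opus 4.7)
The plan is to invoke the alternative characterization of biproducts in a category enriched over $\cmd$ that is discussed in Section~\ref{closed}: once one has, for a family of arrows $\iota^i_a \colon a^i \to a$ and $\pi^i_a \colon a \to a^i$, both of the identities
\[
\pi^j_a\circ\iota^i_a = \delta^i_j, \qquad \sum_{i} \iota^i_a\circ\pi^i_a = \mj_a,
\]
the pair $(a,{\rm I}_a)$ and $(a,\Pi_a)$ is automatically a biproduct cone/cocone. Proposition~\ref{Prop:InjProj} has already delivered exactly these two identities, so the corollary is essentially a matter of spelling out the standard ``matrix calculus'' argument.

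Concretely, for the coproduct part, given arrows $f_i\colon a^i \to c$ for $0\le i<n$, I would define
\[
h \;:=\; \sum_{i=0}^{n-1} f_i\circ \pi^i_a \;\colon\; a \to c
\]
and compute $h\circ\iota^j_a = \sum_i f_i\circ\pi^i_a\circ\iota^j_a = f_j$, where the first identity of Proposition~\ref{Prop:InjProj} kills every summand but the $i=j$ one (using that $f_i\circ 0_{a^j,a^i} = 0_{a^j,c}$, and that $0$ is neutral for $+$). For uniqueness, if $h'\colon a\to c$ also satisfies $h'\circ\iota^i_a = f_i$ for every $i$, then using the second identity of Proposition~\ref{Prop:InjProj} together with the distributivity of $\circ$ over $+$,
\[
h' \;=\; h'\circ\mj_a \;=\; h'\circ\sum_{i}\iota^i_a\circ\pi^i_a \;=\; \sum_{i} h'\circ\iota^i_a\circ\pi^i_a \;=\; \sum_{i} f_i\circ\pi^i_a \;=\; h.
\]
Thus $(a,{\rm I}_a)$ is a universal cocone over $\{a^i\}$.

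The product part is dual: given $g_i\colon c\to a^i$, set $k := \sum_i \iota^i_a\circ g_i$, verify $\pi^j_a\circ k = g_j$ by the first identity, and prove uniqueness by $k' = \mj_a\circ k' = \sum_i \iota^i_a\circ\pi^i_a\circ k' = \sum_i \iota^i_a\circ g_i = k$. Together these show that $(a,{\rm I}_a,\Pi_a)$ is a biproduct of the family $\{a^i \mid 0\le i<n\}$.

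There is no real obstacle: the work has been done in Proposition~\ref{Prop:InjProj}, and the present corollary is a clean deduction from the $\cmd$-enriched characterization of biproducts alluded to in Section~\ref{closed}. The only thing to be careful about is citing the right equalities from Appendix~\ref{language} (distributivity of composition over $+$ and the fact that zero arrows absorb composition) when expanding the sums; all of this is assumed to hold in $\mathcal{F}$ since those equalities are built into the quotient defining its arrows.
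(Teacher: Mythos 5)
Your proposal is correct and follows essentially the same route as the paper's own proof: define the mediating arrow as $\sum_i f^i\circ\pi^i_a$, verify it via the first identity of Proposition~\ref{Prop:InjProj}, and obtain uniqueness by precomposing/postcomposing with $\sum_i \iota^i_a\circ\pi^i_a=\mj_a$. The paper is only slightly terser (it omits the explicit appeals to distributivity of $\circ$ over $+$ and absorption by zero arrows, which you rightly note are among the defining equalities of $\mathcal{F}$), and likewise dismisses the product half as analogous.
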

\begin{proof}
In order to show that the cocone $(a,{\rm I}_a)$ is universal, consider for $0\leq i<n$ arrows $f^i\colon a^i\to c$ of $\mathcal{F}$ and define $h\colon a\to c$ to be $\sum_{i=0}^{n-1} f^i\circ\pi^i_a$. For every $0\leq i<n$, by the left-hand side equality of Proposition~\ref{Prop:InjProj}, we have that $h$ satisfies $h\circ\iota^i_a=f^i$. Assume that $h'\colon a\to c$ for every $0\leq i<n$ satisfies $h'\circ\iota^i_a=f^i$. We conclude that
\[
h'\circ \sum_{i=0}^{n-1} \iota^i_a\circ\pi^i_a=\sum_{i=0}^{n-1} f^i\circ\pi^i_a=h,
\]
and by the right-hand side equality of Proposition~\ref{Prop:InjProj}, we have $h'=h$. That $(a,\Pi_a)$ is a universal cone is proved analogously.
\end{proof}

\section{A normalisation}\label{normalizacija}

Our normalisation of arrows of the category $\mathcal{F}$ is a procedure derived from the one developed in \cite[Section~5]{PZ21}.
The goal is to represent every arrow of $\mathcal{F}$, whose source and target are $\oplus$-free, by a term free of occurrences of $\oplus$, $\iota$ and $\pi$.

For every arrow $u\colon a\to b$ of $\mathcal{F}$, where ${\rm I}_a= (\iota^0_a, \ldots,\iota^{n-1}_a)$, $\Pi_b=(\pi^0_b,\ldots,\pi^{m-1}_b)$, let $M_u$ be the $m\times n$ matrix whose $ij$-entry is $\pi^i_b\circ u\circ \iota^j_a\colon a^j\to b^i$.
Let $X$ be an $m\times n$ matrix whose $ij$-entry is an arrow of $\mathcal{F}$ from $a^j$ to $b^i$ and let $Y$ be a $q\times r$  matrix whose $ij$-entry is an arrow of $\mathcal{F}$ from $c^j$ to $d^i$. We define $X\otimes Y$ as the Kronecker product of matrices over a field, save that the multiplication in the field is replaced by the tensor product of arrows in $\mathcal{F}$. For example,
\[
\left(\begin{array}{ccc} x_{00} & x_{01} & x_{02}
\\
x_{10} & x_{11} & x_{12} \end{array}\right) \otimes \left(\begin{array}{cc} y_{00} & y_{01}
\\
y_{10} & y_{11} \end{array}\right)
\]
is
\[
\left(\begin{array}{cccccc}
x_{00}\otimes y_{00} & x_{00}\otimes y_{01} & x_{01}\otimes y_{00} & x_{01}\otimes y_{01} & x_{02}\otimes y_{00} & x_{02}\otimes y_{01}
\\
x_{00}\otimes y_{10} & x_{00}\otimes y_{11} & x_{01}\otimes y_{10} & x_{01}\otimes y_{11} & x_{02}\otimes y_{10} & x_{02}\otimes y_{11}
\\
x_{10}\otimes y_{00} & x_{10}\otimes y_{01} & x_{11}\otimes y_{00} & x_{11}\otimes y_{01} & x_{12}\otimes y_{00} & x_{12}\otimes y_{01}
\\
x_{10}\otimes y_{10} & x_{10}\otimes y_{11} & x_{11}\otimes y_{10} & x_{11}\otimes y_{11} & x_{12}\otimes y_{10} & x_{12}\otimes y_{11}
\end{array} \right).
\]

Also, we define $X\oplus Y$ as the matrix of arrows of $\mathcal{F}$, schematically presented as
\[
\left(\begin{array}{cc} X & 0 \\ 0 & Y\end{array}\right).
\]
More precisely, for $X=(x_{ij})_{m\times n}$ and $Y=(y_{ij})_{q\times r}$ as above, $X\oplus Y$ is the \linebreak $(m+q)\times (n+r)$ matrix whose $ij$-entry is:
\begin{enumerate}
\item $x_{ij}$, when $i<m$, $j<n$,
\item $y_{(i-m)(j-n)}$, when $i\geq m$, $j\geq n$,
\item $0_{a^j,d^{i-m}}$, when $i\geq m$, $j<n$,
\item $0_{c^{j-n},b^i}$, when $i<m$, $j\geq n$.
\end{enumerate}

If $m=q$, $n=r$, for every $0\leq j< n$, $a^j=c^j$, and for every $0\leq i<m$, $b^i=d^i$, i.e.\ $X$ and $Y$ are of the same type having the corresponding elements in the same hom-sets, then $X+Y$ is the matrix of the same type whose $ij$-entry is $x_{ij}+y_{ij}$. If $n=q$ and for every $0\leq k<n$, $a^k=d^k$, i.e.\ $X$ is an $m\times n$ matrix, $Y$ is an $n\times r$ matrix and for every $0\leq i<m$, $0\leq j<r$, $0\leq k<n$ the composition $x_{ik}\circ y_{kj}$ is defined, then we define $X\circ Y$ as the $m\times r$ matrix whose $ij$-entry is $\sum_{k=0}^{n-1} x_{ik}\circ y_{kj}$ (this sum is defined since every $x_{ik}\circ y_{kj}$ is from $c^j$ to $b^i$).

Just by omitting the case (2) of \cite[Proposition~5.1]{PZ21} we obtain the following.
\begin{prop}\label{matrix1}
For $\bullet\in\{\otimes,\oplus,+,\circ\}$, we have
\[
M_{u_1\bullet u_2}=M_{u_1}\bullet M_{u_2}.
\]
\end{prop}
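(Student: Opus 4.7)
The plan is to verify the identity case by case on the four operations $\bullet \in \{\otimes, \oplus, +, \circ\}$, in each case computing the $(i,j)$-entry of $M_{u_1 \bullet u_2}$ directly from Definition~\ref{proj-inj} and comparing it with the corresponding entry produced by the matrix operation. The two workhorse facts are the bifunctoriality of $\otimes$ (and of $\oplus$) applied to the compositions $\pi_b^i \circ u \circ \iota_a^j$, and the two identities from Proposition~\ref{Prop:InjProj}, namely $\pi_a^j \circ \iota_a^i = \mathbf{1}_{a^i}$ or $0$, and $\sum_k \iota_a^k \circ \pi_a^k = \mathbf{1}_a$.

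For $\bullet = \otimes$, I would assume $u_1 \colon a_1 \to b_1$, $u_2 \colon a_2 \to b_2$ with $|\Pi_{b_1}|=m_1, |\mathrm{I}_{a_1}|=n_1, |\Pi_{b_2}|=m_2, |\mathrm{I}_{a_2}|=n_2$, and compute the $(i,j)$-entry of $M_{u_1 \otimes u_2}$ by expanding the tensor injections/projections from Definition~\ref{proj-inj} and applying the interchange law $(f_1 \otimes f_2)\circ(g_1 \otimes g_2)=(f_1 \circ g_1)\otimes(f_2 \circ g_2)$; the entry factors as $(\pi_{b_1}^{\lfloor i/m_2\rfloor} \circ u_1 \circ \iota_{a_1}^{\lfloor j/n_2\rfloor}) \otimes (\pi_{b_2}^{i\,\mathrm{mod}\,m_2} \circ u_2 \circ \iota_{a_2}^{j\,\mathrm{mod}\,n_2})$, which matches the corresponding entry of the Kronecker product $M_{u_1} \otimes M_{u_2}$. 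For $\bullet = +$, the calculation is one line: since composition distributes over $+$, the $(i,j)$-entry of $M_{u_1+u_2}$ is $\pi_b^i \circ u_1 \circ \iota_a^j + \pi_b^i \circ u_2 \circ \iota_a^j$, the sum of the $(i,j)$-entries.

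For $\bullet = \circ$ with $u_1 \colon a \to b$, $u_2 \colon b \to c$, I would insert the identity $\mathbf{1}_b = \sum_k \iota_b^k \circ \pi_b^k$ between $u_2$ and $u_1$, giving
\[
\pi_c^i \circ (u_2 \circ u_1) \circ \iota_a^j = \sum_{k} (\pi_c^i \circ u_2 \circ \iota_b^k)\circ (\pi_b^k \circ u_1 \circ \iota_a^j),
\]
which is exactly the $(i,j)$-entry of $M_{u_2} \circ M_{u_1}$ as defined. The case $\bullet = \oplus$ is the one I expect to require the most bookkeeping: the $(i,j)$-entry of $M_{u_1 \oplus u_2}$ has the form $\pi_{1+s_i}^{i-m_1 s_i} \circ \pi_{b_1,b_2}^{1+s_i} \circ (u_1 \oplus u_2) \circ \iota_{a_1,a_2}^{1+s_j} \circ \iota_{1+s_j}^{j-n_1 s_j}$, and using the defining equalities $\pi^k_{b_1,b_2} \circ (u_1 \oplus u_2) \circ \iota^{\ell}_{a_1,a_2}=u_k$ when $k=\ell$ and $0$ otherwise (derivable from the biproduct axioms \ref{13}--\ref{14}), one sees the off-diagonal blocks vanish and the diagonal blocks reproduce $M_{u_1}$ and $M_{u_2}$, matching the definition of $M_{u_1} \oplus M_{u_2}$.

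The only genuine obstacle is the index gymnastics in the $\oplus$ and $\otimes$ cases (tracking $\lfloor \cdot/n_2\rfloor$, $\mathrm{mod}\,n_2$, and $s_i$), but since Proposition~\ref{Prop:InjProj} has already been established and the statement here is a direct analogue of \cite[Proposition~5.1]{PZ21} with one case deleted, no new technical ingredient is needed beyond a careful unwinding of Definition~\ref{proj-inj}.
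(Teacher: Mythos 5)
Your case analysis is correct, and each of the four verifications uses exactly the right tool: the interchange law for $\otimes$, distributivity of $\circ$ over $+$, the resolution of the identity $\sum_k \iota_b^k\circ\pi_b^k=\mj_b$ from Proposition~\ref{Prop:InjProj} for $\circ$, and the biproduct equalities for $\oplus$. The paper itself gives no proof here --- it defers entirely to \cite[Proposition~5.1]{PZ21} --- so your argument is the expected content of that omitted proof rather than a divergence from it.
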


Our next proposition is related to \cite[Propositions~5.2, 8.2]{PZ21}
\begin{prop}\label{matrix2}
If $u$ is a primitive term of $\mathcal{F}$, then all the entries of the matrix $M_u$ are primitive terms of $\mathcal{F}$, not of the form $\pi$ or $\iota$, whose indices are $\oplus$-free.
\end{prop}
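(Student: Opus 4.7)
The plan is to proceed by case analysis on the form of the primitive term $u$. For each case, I will compute the generic $ij$-entry $\pi^i_b\circ u \circ \iota^j_a$ of $M_u$ directly, using the recursive definitions of $\iota^i_a$ and $\pi^j_b$ from Definition~\ref{proj-inj} together with naturality (or dinaturality) of the structural map involved. By Remark~\ref{3.1}, the source $a^j$ of $\iota^j_a$ and the target $b^i$ of $\pi^i_b$ are $\oplus$-free, so any indices that survive in the final expression are automatically $\oplus$-free.

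For $u\in\{\gamma,\gamma^{-1}\}$ the source and target are both the generator $p$, which is $\oplus$-free, hence $M_u$ has a single entry equal to $u$ itself. For $u=\mathbf{1}_a$, Proposition~\ref{Prop:InjProj} gives $\mathbf{1}_{a^i}$ on the diagonal and $0_{a^j,a^i}$ off-diagonal. For the monoidal structural maps $\alpha_{a,b,c}$, $\alpha^{-1}_{a,b,c}$, $\lambda_a$, $\lambda^{-1}_a$, and $\sigma_{a,b}$, I slide the structural map past the tensor factors of $\iota^j_a$ and $\pi^i_b$ by naturality; the resulting composite of pairwise $\pi\circ\iota$ terms collapses by Proposition~\ref{Prop:InjProj} to either an identity or $0$, leaving an instance of the same structural map indexed by $\oplus$-free components of $a$, $b$, and $c$.

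For the cups and caps $\eta_a$ and $\varepsilon_a$ one side is $I$, which is $\oplus$-free, so only one factor of injections/projections is genuinely involved. Using the dinaturality equalities~\ref{Jednakost:Diprirodnost}, an entry such as $((\iota^k_a)^\ast\otimes \pi^l_a)\circ \eta_a$ rewrites as $((\pi^l_a\circ \iota^k_a)^\ast\otimes \mathbf{1}_{a^l})\circ \eta_{a^l}$; Proposition~\ref{Prop:InjProj} then yields either $\eta_{a^k}$ (when $k=l$) or a composite containing $(0_{a^k,a^l})^\ast$ that reduces to $0$. The case of $\varepsilon$ is symmetric. For the primitive biproduct arrows $\iota^s_{a,b}$ and $\pi^s_{a,b}$ with $s\in\{1,2\}$, the recursive formulas of Definition~\ref{proj-inj} insert additional factors $\iota^{1+s_i}_{a,b}$ or $\pi^{1+s_j}_{a,b}$; combining them with $u$ via the equalities~\ref{pijota} yields $\mathbf{1}$ when the outer indices match and $0$ otherwise, so each entry is an identity or a zero arrow. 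Finally, for $u=0_{a,b}$, factoring through the zero object forces the entry to equal $0_{a^j,b^i}$.

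The main technical obstacle is bookkeeping: the recursive formulas for $\iota^i_a$ and $\pi^j_b$ can expand into large tensor products whose factors are indexed by $\lfloor i/n\rfloor$, $i\bmod n$, and $1+s_i$, and one must track these carefully when applying naturality. Once the pattern is set, however, all computations are mechanical: at every step either naturality, dinaturality, or Proposition~\ref{Prop:InjProj} applies, and each resulting entry is manifestly a primitive term, not of the form $\pi$ or $\iota$, whose indices are $\oplus$-free by Remark~\ref{3.1}.
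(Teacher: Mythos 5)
Your proposal is correct and follows essentially the same route as the paper's proof: a case analysis over the primitive terms in which naturality (for $\alpha$, $\lambda$, $\sigma$), dinaturality of $\eta$ and $\varepsilon$ (the equalities~\ref{Jednakost:Diprirodnost}), and Proposition~\ref{Prop:InjProj} collapse each entry $\pi^i_b\circ u\circ\iota^j_a$ to a primitive term with $\oplus$-free indices or to a zero arrow, with Remark~\ref{3.1} guaranteeing $\oplus$-freeness. The paper writes out only a representative sample of cases ($\gamma$, $\alpha^{-1}$, $\sigma$, $\varepsilon$, $\pi^1$) and declares the rest analogous, exactly as you do.
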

\begin{proof} We illustrate just a couple of cases. If $u$ is $\gamma$, for $\gamma\in\Gamma$, then $M_u$ is a $1\times 1$ matrix whose only entry is $\gamma$. The same holds when $\gamma$ is replaced by $\gamma^{-1}$. If $u$ is $\alpha^{-1}_{a,b,c}$, then for some $i_1, i_2, i_3$ and $j_1, j_2, j_3$
\begin{align*}
(M_u)_{i,j}&=\pi_{a\otimes (b\otimes c)}^i\circ \alpha^{-1}_{a,b,c}\circ \iota_{(a\otimes b)\otimes c}^j \\ &= (\pi_{a}^{i_1}\otimes (\pi_{b}^{i_2}\otimes \pi_{c}^{i_3})) \circ \alpha^{-1}_{a,b,c} \circ ((\iota_{a}^{j_1}\otimes \iota_{b}^{j_2})\otimes \iota_{c}^{j_3}) \\
&= \begin{cases}
\alpha^{-1}_{a^{i_1},b^{i_2}, c^{i_3}}, & i_1=j_1, \; i_2=j_2, \; i_3=j_3, \\
0_{(a^{j_1}\otimes b^{j_2})\otimes c^{j_3}, a^{i_1}\otimes (b^{i_2}\otimes c^{i_3})}, & \text{otherwise}.
\end{cases}
\end{align*}
If $u$ is $\sigma_{a,b}$, then for some $i_1, i_2$ and $j_1, j_2$
\begin{align*}
(M_u)_{i,j}&=\pi_{b\otimes a}^i\circ \sigma_{a,b}\circ \iota_{a\otimes b}^j \\ &= (\pi_{b}^{i_1}\otimes \pi_{a}^{i_2}) \circ \sigma_{a,b} \circ (\iota_{a}^{j_1}\otimes \iota_{b}^{j_2}) \\
&= \begin{cases}
\sigma_{a^{j_1},b^{j_2}}, & j_1=i_2, \; j_2=i_1, \\
0_{a^{j_1}\otimes b^{j_2}, b^{i_1}\otimes a^{i_2}}, & \text{otherwise}.
\end{cases}
\end{align*}
If $u$ is $\varepsilon_a$, then $M_u$ is a row matrix and for some $j_1$, $j_2$ we have
\begin{align*}
(M_u)_{1,j}&=\varepsilon_a\circ \iota_{a\otimes a^\ast}^j = \varepsilon_a \circ (\iota_{a}^{j_1}\otimes (\pi_{a}^{j_2})^\ast) \stackrel{(\ref{Jednakost:Diprirodnost})}{=} \varepsilon_{a^{j_2}} \circ ((\pi_{a}^{j_2} \circ\iota_{a}^{j_1})\otimes (a^{j_2})^\ast) \\
&= \begin{cases}
\varepsilon_{a^{j_2}}, & j_1=j_2, \\
0_{a^{j_1}\otimes (a^{j_2})^\ast, I}, & \text{otherwise}.
\end{cases}
\end{align*}
If $u$ is $\pi^1_{a,b}$, then
$(M_u)_{i,j}=\pi_a^i\circ \pi^1_{a,b} \circ \iota_{a\oplus b}^{j}$, which is either $\pi_a^i\circ \pi^1_{a,b}\circ \iota^1_{a,b}\circ \iota^{j_1}_a$ for some $j_1$, or
$\pi_a^i\circ \pi^1_{a,b}\circ \iota^2_{a,b}\circ \iota^{j_2}_b$, for some $j_2$. The statement holds since
\[
\pi_a^i\circ \pi^1_{a,b}\circ \iota^1_{a,b}\circ \iota^{j_1}_a=
\begin{cases}
\mathbf{1}_{a^i}, & i=j_1, \\
0_{a^{j_1}, a^i}, & \text{otherwise},
\end{cases}
\quad\quad
\pi_a^i\circ \pi^1_{a,b}\circ \iota^2_{a,b}\circ \iota^{j_2}_b=
0_{b^{j_2},a^i}.
\]
We proceed analogously when $u$ is $\mj_a$, $\alpha_{a,b,c}$, $\lambda_a$, $\lambda^{-1}_a$, $\eta_a$, $\pi^2_{a,b}$, $\iota^1_{a,b}$, $\iota^2_{a,b}$ or $0_{a,b}$.
\end{proof}

\begin{cor}\label{4.3}
For every arrow $u$ of $\mathcal{F}$, every entry of $M_u$ is expressible free of $\oplus$, $\iota$ and $\pi$.
\end{cor}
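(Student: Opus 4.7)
The proof will proceed by structural induction on the term representing the arrow $u$, with Proposition~\ref{matrix2} handling the base case and Proposition~\ref{matrix1} driving the inductive step. The underlying observation is that each of the four matrix-level operations $\otimes$, $\oplus$, $+$, $\circ$ combines the entries of its arguments using only $\otimes$, $+$, $\circ$ and insertions of zero arrows $0_{a,b}$; none of these construction steps introduces a fresh occurrence of $\oplus$, $\iota$ or $\pi$ at the level of entries.

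For the base case, when $u$ is primitive, Proposition~\ref{matrix2} asserts that every entry of $M_u$ is a primitive term of $\mathcal{F}$ that is not of the form $\iota$ or $\pi$ and whose indices are $\oplus$-free. Such a term already contains none of the forbidden symbols, so the claim is immediate.

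For the inductive step, assume the result holds for $u_1$ and $u_2$, and let $\bullet \in \{\otimes, \oplus, +, \circ\}$. By Proposition~\ref{matrix1}, $M_{u_1 \bullet u_2} = M_{u_1} \bullet M_{u_2}$. In the Kronecker case every entry has the form $x \otimes y$, in the $+$ case $x + y$, and in the $\circ$ case a sum of compositions $x \circ y$, where $x$ and $y$ range over entries of $M_{u_1}$ and $M_{u_2}$ to which the inductive hypothesis applies directly. In the $\oplus$ case each entry of $M_{u_1} \oplus M_{u_2}$ is either an entry of $M_{u_1}$, an entry of $M_{u_2}$, or a zero arrow of the form $0_{a^j, d^{i-m}}$ or $0_{c^{j-n}, b^i}$; by Remark~\ref{3.1} the objects indexing these zero arrows are sources or targets of injections or projections, and hence $\oplus$-free. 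Therefore every entry of $M_{u_1 \bullet u_2}$ is expressible without $\oplus$, $\iota$ or $\pi$, and the induction closes.

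The only delicate point is precisely the $\oplus$ case, where one must verify that no hidden $\oplus$ survives inside the indices of the newly created zero arrows; this is exactly what Remark~\ref{3.1} guarantees, so no genuine obstacle arises. I expect the write-up to be short, essentially a one-line base case citing Proposition~\ref{matrix2} followed by a four-case inductive paragraph citing Proposition~\ref{matrix1}.
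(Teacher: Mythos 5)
Your proof is correct and follows exactly the route the paper intends: the corollary is stated there without proof, as an immediate consequence of Propositions~\ref{matrix1} and~\ref{matrix2} via structural induction on the term $u$, which is precisely your argument. Your explicit check that the zero arrows created in the $\oplus$ case have $\oplus$-free indices (via Remark~\ref{3.1}) is a detail the paper leaves tacit, and it is handled correctly.
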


As a consequence of Remark~\ref{3.1} and Corollary~\ref{4.3} we have the following.

\begin{cor}\label{4.4}
Every arrow of $\mathcal{F}$ whose source and target are $\oplus$-free is expressible free of $\oplus$, $\iota$ and $\pi$.
\end{cor}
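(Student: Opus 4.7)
The plan is to combine Remark~\ref{3.1} and Corollary~\ref{4.3} directly. The key observation is that when the source and target of $u$ are $\oplus$-free, the matrix $M_u$ collapses to a single entry, which is equal to $u$ itself; then the entry-level normalisation supplied by Corollary~\ref{4.3} automatically delivers the desired normal form for $u$.

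In more detail, I would fix an arrow $u\colon a\to b$ of $\mathcal{F}$ with $a$ and $b$ both $\oplus$-free. By Remark~\ref{3.1}, we have $\mathrm{I}_a=(\mj_a)$ and $\Pi_b=(\mj_b)$, so $M_u$ is a $1\times 1$ matrix whose unique entry is
$$\pi^0_b\circ u\circ \iota^0_a \;=\; \mj_b\circ u\circ \mj_a \;=\; u.$$
Applying Corollary~\ref{4.3} to this single entry yields a term representing $u$ in $\mathcal{F}$ that contains no occurrence of $\oplus$, $\iota$, or $\pi$, which is exactly the claim.

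I do not anticipate any substantive obstacle: Propositions~\ref{matrix1} and \ref{matrix2} together with Corollary~\ref{4.3} have already packaged the entire normalisation machinery, and the present corollary is essentially a book-keeping specialisation to the $\oplus$-free case. The only subtlety worth flagging is that the $\oplus$-, $\iota$-, $\pi$-free expression produced by Corollary~\ref{4.3} for the single entry of $M_u$ really coincides with $u$ as an arrow of $\mathcal{F}$ rather than merely as a formal matrix entry; this is guaranteed by the identification $\pi^0_b\circ u\circ \iota^0_a = u$ above, which in turn uses only the degenerate case of Remark~\ref{3.1}.
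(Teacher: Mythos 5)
Your argument is correct and is exactly the route the paper intends: it states Corollary~\ref{4.4} as an immediate consequence of Remark~\ref{3.1} and Corollary~\ref{4.3}, and your observation that $M_u$ degenerates to the $1\times 1$ matrix with entry $\mj_b\circ u\circ\mj_a=u$ is precisely the missing book-keeping. Nothing further is needed.
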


\section{The category $1\cobG^\oplus$ and coherence}\label{graficki}

The aim of this section is to introduce a category providing a diagrammatical checking of validity of quantum protocols. We start with a set $\Gamma$ (usually finite and non-empty) and a group $\mathfrak{G}$ freely generated by $\Gamma$. The category $1\cob$ and the group $\mathfrak{G}$ deliver the category $1\cobG$ through the following construction. The objects of $1\cobG$ are the objects of $1\cob$ and in order to define the arrows of $1\cobG$ we introduce the notions of $\mathfrak{G}$-components and $\mathfrak{G}$-cobordisms first.

A $\mathfrak{G}$-\emph{component} is a connected, oriented 1-manifold possibly with boundaries, together with an element of $\mathfrak{G}$. When a $\mathfrak{G}$-component is closed, we call it $\mathfrak{G}$-\emph{circle}, otherwise it is a $\mathfrak{G}$-\emph{segment}. We call the element of $\mathfrak{G}$ associated to a component the \emph{label} of this component.

A $\mathfrak{G}$-\emph{cobordism} from $a$ to $b$ is a finite collection of $\mathfrak{G}$-components whose underlying manifold is $M$, together with two embeddings $f_0\colon a\to M$ and $f_1 \colon b\to M$ such that $(M,f_0,f_1)$ is a 1-cobordism from $a$ to $b$. Two $\mathfrak{G}$-cobordisms are \emph{equivalent}, when the underlying 1-cobordisms are equivalent and the homeomorphism $F$ witnessing this equivalence satisfies:
\begin{enumerate}
\item every segment and its $F$-image are labeled by the same element of $\mathfrak{G}$;
\item the labels of a circle and its $F$-image could differ only in a circular permutation, i.e.\ if one is of the form $g_2\cdot g_1$, the other could be $g_1\cdot g_2$.
\end{enumerate}
The operation $\dagger$ on $\mathfrak{G}$-cobordisms is defined so that it is applied to the underlying cobordism and every label is replaced by its inverse.
\begin{center}
\begin{tikzpicture}[scale=0.45,line cap=round,line join=round,x=1.0cm,y=1.0cm]
\draw [line width=1pt][shift={(3.0,4.0)}] plot[domain=3.141592653589793:6.283185307179586,variable=\t]({1.0*2.0*cos(\t r)+-0.0*2.0*sin(\t r)},{0.0*2.0*cos(\t r)+1.0*2.0*sin(\t r)});
\draw [line width=1pt](1.0,0.0)-- (3.0,4.0);
\draw [line width=1pt](13.0,4.0)-- (15.0,0.0);
\draw [line width=1pt][shift={(15.0,0.0)}] plot[domain=0.0:3.141592653589793,variable=\t]({1.0*2.0*cos(\t r)+-0.0*2.0*sin(\t r)},{0.0*2.0*cos(\t r)+1.0*2.0*sin(\t r)});
\draw (0.5,5) node[anchor=north west] {$\ps$};
\draw (2.5,5) node[anchor=north west] {$\ps$};
\draw (0.5,0) node[anchor=north west] {$\ps$};
\draw (12.5,5) node[anchor=north west] {$\ps$};
\draw (12.5,0) node[anchor=north west] {$\ps$};
\draw (14.5,0) node[anchor=north west] {$\ps$};
\draw (4.5,5) node[anchor=north west] {$\ms$};
\draw (16.5,0) node[anchor=north west] {$\ms$};
\draw (1.4,1.6) node[anchor=north west] {$g$};
\draw (13.3,4.2) node[anchor=north west] {$g^{-1}$};
\draw (3.6,2.3) node[anchor=north west] {$h$};
\draw (15.8,2.8) node[anchor=north west] {$h^{-1}$};
\draw (-1,2.8) node[anchor=north west] {$f\colon$};
\draw (11,2.8) node[anchor=north west] {$f^\dagger\colon$};
\draw [->, >=stealth, line width=1.3pt] (1.0816257580341195,0.16325151606823912) -- (1.0,0.0);
\draw [->, >=stealth, line width=1.3pt] (4.99,3.8813279284517814) -- (5.0,4.0);
\draw [->, >=stealth, line width=1.3pt] (14.918608800547068,0.1627823989058661) -- (15.0,0.0);
\draw [->, >=stealth, line width=1.3pt] (13.01,0.1329158493488266) -- (13.0,0.0);
\draw [fill=black] (5.0,4.0) circle (3pt);
\draw [fill=black] (1.0,4.0) circle (3pt);
\draw [fill=black] (1.0,0.0) circle (3pt);
\draw [fill=black] (3.0,4.0) circle (3pt);
\draw [fill=black] (13.0,4.0) circle (3pt);
\draw [fill=black] (15.0,0.0) circle (3pt);
\draw [fill=black] (13.0,0.0) circle (3pt);
\draw [fill=black] (17.0,0.0) circle (3pt);
\end{tikzpicture}
\end{center}
The category $1\cobG$ has the equivalence classes of $\mathfrak{G}$-cobordisms as arrows. The identity $\mj_a\colon a\to a$  is the ordinary identity cobordism in which every segment is labeled by the \emph{neutral} $e$ of $\mathfrak{G}$. Two $\mathfrak{G}$-cobordisms are composed so that the underlying 1-cobordisms are composed in the ordinary manner. It remains to label the resulting segments and circles: if the segments $l_1,\ldots,l_k$ with labels $g_1,\ldots,g_k$ respectively, are glued together in a segment or a circle of the resulting 1-cobordism so that the terminal point of $l_i$ is identified with the initial point of $l_{i+1}$, then $g_k\cdot\ldots\cdot g_1$ is the (``a'' in the case of a circle) label of the resulting component. The category $1\cobG$ has dagger strict compact closed structure inherited from $1\cob$ (all segments in canonical arrows $\sigma$, $\eta$ and $\varepsilon$ are labeled in $1\cobG$ by the neutral $e$ of $\mathfrak{G}$).

Let us compare the above construction with the construction of the category $G\mathcal{A}$ given in \cite{KL80}, for $\mathcal{A}$ being the groupoid $\mathfrak{G}$, i.e.\ the category with a single object $p$ whose arrows are the elements of $\mathfrak{G}$ and the composition is the multiplication in $\mathfrak{G}$. The main theorem of \cite{KL80} claims that $G\mathfrak{G}$ is a compact closed category freely generated by the category $\mathfrak{G}$. This means that for every compact closed category $\mathcal{C}$ and a function $\varphi$ from the set $\Gamma$ to the set of automorphisms of an object $c$ of $\mathcal{C}$, there exists a unique functor
\[
F\colon G\mathfrak{G} \to \mathcal{C}
\]
that strictly preserves compact closed structure, and such that $Fp=c$ and for every $\gamma\in \Gamma$, $F\gamma=\varphi(\gamma)$.

One could easily conclude that $1\cobG$ is a strict compact closed version of $G\mathfrak{G}$. More precisely, the functor $F^{\mathrm{st}}\colon G\mathfrak{G}\to 1\cobG$ obtained by the above universal property of $G\mathfrak{G}$ is defined as follows. It maps every object $X$ of $G\mathfrak{G}$ to the sequence of signs corresponding to the signed set $P(X)$ (see \cite[Section~3]{KL80}). On arrows it is defined just by replacing the source and the target by the corresponding sequences of signs. Namely, an arrow of $G\mathfrak{G}$ (see \cite[Section~3]{KL80}) is represented by a triple, which is essentially contained in the notion of  $\mathfrak{G}$-cobordism. Hence, $F^{\mathrm{st}}$ maps an arrow (neglecting its source and target) to itself. It is straightforward to see that we have the following.
\begin{prop}\label{strictification}
The functor $F^{\mathrm{st}}\colon G\mathfrak{G}\to 1\cobG$ is faithful.
\end{prop}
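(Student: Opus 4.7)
The plan is to exploit the explicit normal form for arrows of $G\mathfrak{G}$ supplied by Kelly and Laplaza and to observe that $F^{\mathrm{st}}$ merely re-reads this normal form as a $\mathfrak{G}$-cobordism. By \cite[Section~3]{KL80}, a parallel pair of arrows $f,g\colon X\to Y$ in $G\mathfrak{G}$ is determined, up to the equalities imposed by compact closed coherence, by the following data: the underlying signed sets $P(X)$ and $P(Y)$, a fixed-point-free involution on their signed disjoint union whose orbits correspond to the segments and circles of the arrow, and an element of $\mathfrak{G}$ attached to each such orbit, with the convention that labels on closed orbits (circles) are considered only up to cyclic permutation. This is exactly the content of a triple in the sense of \cite{KL80}.

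Next, I would observe, as the paragraph preceding the statement already emphasises, that $F^{\mathrm{st}}$ leaves the triple unchanged: it only relabels the object $X$ by the sequence of signs associated with $P(X)$, and it reads the involution and labels on orbits as the boundary matching and component labels of the resulting $\mathfrak{G}$-cobordism. In particular, the connected components of the underlying $1$-manifold of $F^{\mathrm{st}}(f)$ are in canonical bijection with the orbits of the involution of $f$; segment labels coincide with segment labels, and circle labels are carried over with the same cyclic-permutation freedom.

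It then suffices to show: if $F^{\mathrm{st}}(f)=F^{\mathrm{st}}(g)$ in $1\cobG$, then $f$ and $g$ induce the same triple in the sense of \cite{KL80}. An equality in $1\cobG$ provides an orientation-preserving homeomorphism $F$ between the underlying 1-manifolds commuting with the boundary embeddings $f_0,f_1$, preserving segment labels on the nose, and preserving circle labels up to cyclic permutation. Commutativity with the boundary embeddings forces $F$ to realise the identity matching of boundary endpoints, so the involutions associated with $f$ and $g$ agree. Likewise the segment labels coincide verbatim, and the circle labels are identified precisely under the identification already built into the Kelly--Laplaza normal form. Hence $f$ and $g$ represent the same triple, so $f=g$ in $G\mathfrak{G}$.

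The main obstacle is the careful matching of the two equivalence relations: the coherence equalities cutting out arrows of $G\mathfrak{G}$ on one side, and the topological equivalence of $\mathfrak{G}$-cobordisms (orientation-preserving homeomorphism over the boundary, rigid on segment labels, cyclic on circle labels) on the other. Once one has convinced oneself that these two equivalences compute the same invariant of the representing triple, the faithfulness of $F^{\mathrm{st}}$ becomes essentially a bookkeeping check; the substantive geometric content has already been carried out by \cite{KL80}.
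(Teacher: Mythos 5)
Your argument is correct and is essentially the paper's own justification made explicit: the paper proves Proposition~\ref{strictification} by observing that an arrow of $G\mathfrak{G}$ is represented by a Kelly--Laplaza triple which is ``essentially contained'' in the notion of a $\mathfrak{G}$-cobordism, so that $F^{\mathrm{st}}$ acts as the identity on this data, and you simply spell out the resulting matching of the two equivalence relations (segment labels on the nose, circle labels up to cyclic permutation). Nothing further is needed.
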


In another words, to pass from $1\cobG$ to $G\mathfrak{G}$ one has to ``decorate'' the objects of $1\cobG$ with propositional formulae built in the language including single propositional letter, constant $I$, unary connective $\ast$ and binary connective $\otimes$. However, this just disguises strict compact closed nature of $1\cobG$, which is intrinsic to this category.

Let $1\cobG^+$ be the category with the same objects as $1\cobG$, while the arrows of $1\cobG^+$ from $a$ to $b$ are the formal sums of arrows of $1\cobG$ from $a$ to $b$. These formal sums may be represented by finite (possibly empty) multisets of $\mathfrak{G}$-cobordisms from $a$ to $b$. Formally, a multiset of elements of a set $X$  is a function from $X$ to the set of natural numbers (including zero). Less formally, it is a set in which elements may have multiple occurrences.

We abuse the notation by using the set brackets $\{$, $\}$ for multisets and by denoting a singleton multiset $\{f\}$ by $f$. Note that in this notation $\bigcirc +\bigcirc$, i.e.\ $\{\bigcirc,\bigcirc\}$ is not equal to $\bigcirc\bigcirc$, i.e.\ $\{\bigcirc\bigcirc\}$, where $\bigcirc$ is a circular component with arbitrary label.

The identity arrow $\mj_a\colon a\to a$ is the singleton multiset $\mj_a\colon a\to a$, while the composition of $\{f_j\colon a\to b\mid j\in J\}$ and $\{f_k\colon b\to c\mid k\in K\}$ is
\[
\{f_k\circ f_j\colon a\to c\mid j\in J, k\in K\}.
\]

Again, because of too many roles of $\emptyset$ in this paper, we denote the empty multiset of $\mathfrak{G}$-cobordisms from $a$ to $b$ by $0_{a,b}$, and call it \emph{zero-arrow}.
The existence of zero-arrows implies that every hom-set in $1\cobG^+$ is inhabited. The category $1\cobG^+$ is enriched over the category $\cmd$. The addition in ${\rm Hom}\,(a,b)$ is the operation + (disjoint union) on multisets and the neutral is $0_{a,b}$.

Let $1\cobG^\oplus$ be the biproduct completion of $1\cobG^+$ constructed as follows (see \cite[Section~5.1]{S07}). The objects of $1\cobG^\oplus$ are finite (possibly empty) sequences $( a_0,\ldots,a_{n-1})$, $n\geq 0$, of objects $a_0,\ldots,a_{n-1}$ of $1\cobG$. (We abuse the notation by denoting a singleton sequence $(a_0)$ by $a_0$.) For example, $(\ps\ps\ms\ps\ms\ms,o,\ps,\ms\ms\ps,o )$ is an object of $1\cobG^\oplus$. (Here, according to our convention, $o$ denotes the empty sequence of oriented points.)

The empty sequence of objects of $1\cobG$ plays the role of zero-object in $1\cobG^\oplus$, and for the above reasons we denote it by $\mathbf{0}$ and not by $\emptyset$. Note the distinction between this object and the object presented by the singleton sequence $o$ whose only member is the empty sequence of oriented points.

The arrows of $1\cobG^\oplus$ from $( a_0,\ldots,a_{n-1})$ to $( b_0,\ldots,b_{m-1})$ are the $m\times n$ matrices whose $ij$-entry is an arrow of $1\cobG^+$ from $a_j$ to $b_i$. If $m=0$, i.e.\ $( b_0,\ldots,b_{m-1})=\mathbf{0}$, then the empty matrix is the unique arrow from $a=( a_0,\ldots,a_{n-1})$ to $\mathbf{0}$, and we denote it by $0_{a,0}$. We proceed analogously when $n=0$.

The identity arrow $\mj_a$ on $a=( a_0,\ldots,a_{n-1})$ in $1\cobG^\oplus$ is the $n\times n$ matrix with corresponding identity arrows of $1\cobG^+$ in the main diagonal and corresponding zero-arrows of $1\cobG^+$ elsewhere. The arrows are composed by the rule of matrix multiplication, save that the addition and multiplication in a field are replaced by addition in hom-sets and composition in the category $1\cobG^+$. For $a=( a_0,\ldots,a_{n-1})$ and $b=( b_0,\ldots,b_{m-1})$, we denote by $0_{a,b}$, or simply $0_{m\times n}$, the $m\times n$ matrix whose $ij$-entry is the zero-arrow $0_{a_j,b_i}$ of $1\cobG^+$. In the limit cases, when we compose the empty matrices $0_{a,0}$ and $0_{0,b}$, we define the result as the \emph{zero-matrix} $0_{a,b}$.

\begin{prop}\label{dagger}
The category $1\cobG^\oplus$ has the structure of strict compact closed category with bi\-products. The group of automorphisms of the object $\ps$ in this category is isomorphic to $\mathfrak{G}$. Moreover, $\dagger$ is definable in $1\cobG^\oplus$, which makes it dagger strict compact closed category with dagger biproducts, while the automorphisms of $\ps$ are unitary.
\end{prop}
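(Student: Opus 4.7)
The plan is to build the structure of $1\cobG^\oplus$ in three layers, transferring the dagger strict compact closed structure from $1\cobG$ to $1\cobG^+$ to $1\cobG^\oplus$ and adjoining biproducts at the last step, and then to treat the automorphism group of $\ps$ separately. First, I would note that $1\cobG$ inherits dagger strict compact closed structure from $1\cob$: the canonical arrows $\sigma_{a,b}$, $\eta_a$, $\varepsilon_a$ are obtained by labelling every segment of the corresponding arrow of $1\cob$ by the neutral $e$ of $\mathfrak{G}$, and $\dagger$ acts by reversing orientations and inverting labels. All the equalities of Appendix~\ref{language} needed for this structure reduce to their $1\cob$-versions plus trivial group identities $e\cdot e=e$ and $g\cdot g^{-1}=e$. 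Passing to $1\cobG^+$, tensor, dagger and duals extend componentwise to finite multisets, and the resulting category is $\cmd$-enriched with $+$ given by multiset union and with neutrals $0_{a,b}$.

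Second, for $1\cobG^\oplus$ I would follow the standard biproduct-completion procedure of \cite[Section~5.1]{S07}: the biproduct of $(a_0,\ldots,a_{n-1})$ and $(b_0,\ldots,b_{m-1})$ is their concatenation, with injections and projections given by the block matrices whose non-trivial entries are the identities of $1\cobG^+$. Tensor on objects is defined as the lexicographically ordered sequence of pairwise tensors, and on matrices as the Kronecker-type product already used in Section~\ref{normalizacija}, with the field multiplication replaced by the tensor in $1\cobG^+$. The dual of a sequence is obtained by reversing the sequence and dualising each entry, and the units, counits and symmetries are assembled, in block-diagonal fashion, from those of $1\cobG^+$ labelled by $e$. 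Strictness at every level follows from strictness of $1\cobG$ together with the fact that concatenation of sequences is strictly associative with $\mathbf{0}$ as strict neutral. The routine verification of the equalities of Appendix~\ref{language} reduces via the matrix calculus to the corresponding equalities in $1\cobG^+$; Corollary~\ref{3.3} (or rather its analogue for this concretely-built category) then guarantees that the coproduct–product pairs above really are biproducts.

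Third, I would define $\dagger$ on $1\cobG^\oplus$ by transposing matrices and applying the componentwise $\dagger$ of $1\cobG^+$ to each entry, i.e.\ $(X^\dagger)_{ij}=(X_{ji})^\dagger$. Functoriality and involutivity are immediate, and the compatibility equalities \ref{28}--\ref{30} and \ref{31} of Appendix~\ref{language} follow from the corresponding equalities in $1\cobG$ (all canonical arrows involved are labelled by $e$, so are fixed by $\dagger$ up to orientation reversal) together with the observation that the block matrices realising the biproduct injections and projections are mutually transpose with entries in the image of $\dagger$.

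The most delicate point is the identification of the group $\mathrm{Aut}(\ps)$ with $\mathfrak{G}$. An automorphism $X$ of $(\ps)$ in $1\cobG^\oplus$ is a $1\times 1$ matrix whose entry is a multiset $M$ of $\mathfrak{G}$-cobordisms $\ps\to\ps$; writing its inverse as $N$ and using $N\circ M=\{\mj_\ps\}$ together with the fact that multiset composition multiplies cardinalities forces $|M|=|N|=1$, so $M=\{f\}$ for an invertible $\mathfrak{G}$-cobordism $f$. The underlying 1-cobordism $\ps\to\ps$ has a single $\ps$ and a single $\ms$ on its boundary and no spare boundary points, so it is one arc plus possibly some circles; circles persist under composition, so invertibility forces $f$ to be a single $\mathfrak{G}$-segment labelled by some $g\in\mathfrak{G}$. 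The assignment $\{f\}\mapsto g$ (or its inverse, depending on the gluing convention) is then a group isomorphism onto $\mathfrak{G}$, and unitarity $f^\dagger=f^{-1}$ is immediate from the definition of $\dagger$, which reverses the segment's orientation and inverts its label. I expect the bookkeeping in the second step, especially verifying the interaction of the matrix tensor with the distributivity isomorphisms \eqref{DistTau}--\eqref{DistUpsilon}, to be the most tedious part, though conceptually routine.
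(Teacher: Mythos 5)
Your proposal matches the paper's proof in all essentials: the same layered construction $1\cobG\to 1\cobG^+\to 1\cobG^\oplus$ with Kronecker-style tensor and block-matrix biproducts, the same entrywise-transposed definition of $\dagger$, and the identical cardinality-plus-uncancellable-circles argument identifying $\mathrm{Aut}(\ps)$ with $\mathfrak{G}$. The only deviation is cosmetic: the paper defines $\ast$ on sequences componentwise without reversing the order of the entries, whereas you reverse the sequence; either convention works once $\eta_a$ and $\varepsilon_a$ are placed in the matching rows and columns.
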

\begin{proof}
We define the compact closed structure on $1\cobG^\oplus$ as follows. The tensor product of objects $( a_0,\ldots,a_{n-1})$ and $( b_0,\ldots,b_{m-1})$ is the object $( a_0\otimes b_0,\ldots,a_0\otimes b_{m-1},\ldots,a_{n-1}\otimes b_{m-1})$ of $1\cobG^\oplus$. If either $n=0$ or $m=0$, the result is zero-object $\mathbf{0}$. The unit object is $o$. The tensor product of arrows of $1\cobG^\oplus$ is defined as the Kronecker product of matrices over a field, save that this time the multiplication in the field is replaced by the tensor product in the category $1\cobG^+$.

The arrows $\alpha$ and $\lambda$ are identities. For $a=( a_0,\ldots,a_{n-1})$ and $b=( b_0,\ldots,b_{m-1})$, the $(n\cdot m)\times (m\cdot n)$ matrix $\sigma_{a,b}$ (an arrow of $1\cobG^\oplus$) is defined as the permutation matrix representing the isomorphism between $V\otimes W$ and $W\otimes V$ for $V$ being $n$-dimensional and $W$ being $m$ dimensional vector space, save that instead of the entries $1$, we have arrows $\sigma$ from $1\cobG^+$, with corresponding indices, and instead of entries $0$, we have zero-arrows (i.e. empty multisets) of $1\cobG^+$ with corresponding indices. For example, if  $a=( a_0,a_1,a_2)$ and $b=( b_0,b_1)$, the matrix $\sigma_{a,b}$ (with indices of zero-arrows omitted) is
\[
\left(\begin{array}{cccccc} \sigma_{a_0,b_0} & 0_{a_0\otimes b_1,b_0\otimes a_0} & 0 & 0 & 0 & 0
\\ 0 & 0 & \sigma_{a_1,b_0} & 0 & 0 & 0
\\ 0 & 0 & 0 & 0 & \sigma_{a_2,b_0} & 0
\\ 0 & \sigma_{a_0,b_1} & 0 & 0 & 0 & 0
\\ 0 & 0 & 0 & \sigma_{a_1,b_1} & 0 & 0
\\ 0 & 0 & 0 & 0 & 0 & \sigma_{a_2,b_1}
\end{array}\right).
\]

The operation $\ast$ on objects of $1\cobG^\oplus$ is defined componentwise. The arrow $\eta_a$ for $a=( a_0,\ldots,a_{n-1})$ is the $n^2\times 1$ matrix with the singleton multiset $\eta_{a_k}$ in the $k\cdot (n+1)$-th row, for $0\leq k<n$, and zero-arrows of $1\cobG^+$, with corresponding indices, elsewhere. The arrow $\varepsilon_a$ is the $1\times n^2$ matrix having $\varepsilon_{a_k}$  in the $k\cdot (n+1)$-th column, for $0\leq k<n$, and zero-arrows of $1\cobG^+$, with corresponding indices, elsewhere. One can verify that the equalities~\ref{1}-\ref{18} hold in $1\cobG^\oplus$. Moreover, the arrows $u_{a,b}$, $v$ and $w_a$ defined in Section~\ref{closed} are identities. Hence $1\cobG^\oplus$ is a strict compact closed category.

The operation $+$ on arrows from $( a_0,\ldots,a_{n-1})$ to $( b_0,\ldots,b_{m-1})$ is defined componentwise and zero-matrices are the neutrals for this operation. The equations \ref{16}-\ref{18} hold, which guarantees that $1\cobG^\oplus$ is enriched over $\cmd$.

For objects $a=( a_0,\ldots,a_{n-1})$ and $b=( b_0,\ldots,b_{m-1})$ the object $a\oplus b$ is the sequence
\[
( a_0,\ldots,a_{n-1}, b_0,\ldots,b_{m-1}).
\]
The object $\mathbf{0}$ is the zero-object of $1\cobG^\oplus$ and it is the neutral for $\oplus$. For arrows $A_{m\times n}$ and $B_{p\times q}$ of $1\cobG^\oplus$ its \emph{direct sum} $A\oplus B$ is the $(m+p)\times (n+q)$ matrix
\[
\left(\begin{array}{cc} A & 0_{m\times q} \\ 0_{p\times n} & B\end{array}\right).
\]
For $a=( a_0,\ldots,a_{n-1})$ and $b=( b_0,\ldots,b_{m-1})$, the arrows $\pi^1_{a,b}$, $\pi^2_{a,b}$, $\iota^1_{a,b}$ and $\iota^2_{a,b}$ are defined as
\[
\pi^1_{a,b}=\left(\begin{array}{cc} \mj_a & 0_{n\times m} \end{array}\right),\quad \pi^2_{a,b}=\left(\begin{array}{cc} 0_{m\times n}  & \mj_b \end{array}\right),
\]
\[
\iota^1_{a,b}=\left(\begin{array}{c} \mj_a \\ 0_{m\times n} \end{array}\right),\quad \iota^2_{a,b}=\left(\begin{array}{c} 0_{n\times m} \\ \mj_b \end{array}\right).
\]
After checking that the equalities \ref{3}-\ref{22} hold in $1\cobG^\oplus$, one concludes that this category is strict compact closed with biproducts.

That the group of automorphisms of the object $\ps$ in $1\cobG^\oplus$ is isomorphic to $\mathfrak{G}$ is shown as follows. Every arrow from $\ps$ to itself is a $1\times 1$ matrix whose entry is a multiset of arrows of $1\cobG$ from the singleton sequence of oriented points $\ps$ to itself. This multiset is a singleton in the case of an isomorphism, which follows from the fact that the composition in $1\cobG^+$ of a multiset of cardinality $n$ with a multiset of cardinality $m$ is a multiset of cardinality $n\cdot m$, and an isomorphism must be canceled to $\mj_{\ps}\colon \ps\to \ps$, which is the singleton multiset $\mj_{\ps}$. Hence, every isomorphism from $\ps$ to $\ps$ in $1\cobG^\oplus$ is of the form $\psi\colon \ps\to \ps$, for $\psi$ an arrow of $1\cobG$. Moreover, $\psi$ must be an isomorphism in $1\cobG$. An arrow of $1\cobG$ from $\ps$ to $\ps$ consists of a single $\mathfrak{G}$-segment and several (possibly zero) $\mathfrak{G}$-circles. Since $\psi$ is an isomorphism and $\mathfrak{G}$-circles are not cancelable, there are no $\mathfrak{G}$-circles in $\psi$ and it could be identified with the underlying $\mathfrak{G}$-segment. The label of this segment is the element of $\mathfrak{G}$ corresponding to the initial isomorphism of $1\cobG^\oplus$. It is evident that this correspondence is a one-to-one homomorphism.

The operation $\dagger$ on arrows of $1\cobG^+$ is defined as
\[
(\{f_j\colon a\to b\mid j\in J\})^\dagger=\{f_j^\dagger\colon b\to a\mid j\in J\}.
\]
The operation $\dagger$ on a matrix representing an arrow of $1\cobG^\oplus$ is defined by transposing this matrix, and by applying the operation $\dagger$, defined above, to each of its entries. In order to verify that $1\cobG^\oplus$ is dagger strict compact closed with dagger bi\-products, it remains to check that the equalities~\ref{4}-\ref{31} of Appendix~\ref{language} hold. The definition of $\dagger$ in $1\cobG$ guarantees that the automorphisms of $\ps$ are unitary.
\end{proof}

\begin{rem}
For our purposes it is useful to have a direct presentation of $\ulcorner f \urcorner$, $\llcorner f \lrcorner$, $f_\ast$ and $\langle f_1,\ldots,f_n\rangle$ at least for arrows $f,f_1,\ldots,f_n$ of $1\cobG$. The first three operations are defined as in $1\cob$ (the labels of $\mathfrak{G}$-components remain the same in the first two cases, while in the case of $f_\ast$ the labels become the inverses of the initial labels). The last operation (see the definition of biproducts in Section~\ref{closed}) produces the $n\times 1$ matrix
\[
\left(\begin{array}{c} f_1
\\
\vdots
\\
f_n
\end{array}\right).
\]
\end{rem}

\begin{rem} \label{Remark:Distributivity}
By relying on the equalities \ref{DistUpsilon} it is not difficult to show that the left distributivity isomorphism $\upsilon_{a,b,c}\colon (a\oplus b)\otimes c\to (a\otimes c)\oplus (b\otimes c)$ is the identity in the category $1\cobG^\oplus$. Similarly, if we assume that $a$ is a singleton sequence, then by relying the equalities \ref{DistTau} we can show that the right distributivity isomorphism $\tau_{a,b,c}\colon a\otimes (b\oplus c)\to (a\otimes b)\oplus (a\otimes c)$ is also the identity in $1\cobG^\oplus$.
\end{rem}

\begin{rem}\label{functor F}
By the universal property of the category $\mathcal{F}$ from Section~\ref{free category}, there exists a unique functor $H\colon\mathcal{F}\to 1\cobG^\oplus$ that strictly preserves the compact closed structure with biproducts, for which $Hp=\ps$ and for every $\gamma\in \Gamma$, $H\gamma$ is the $\mathfrak{G}$-cobordism from $\ps$ to $\ps$ given by one $\mathfrak{G}$-segment labeled by $\gamma$. The isomorphism of $\mathcal{F}$ and $\mathcal{F}^\dagger$ from the proof of Proposition~\ref{free-iso} enables one to consider $H$ as a functor from $\mathcal{F}^\dagger$ to $1\cobG^\oplus$ that strictly preserves the dagger compact closed structure with dagger biproducts.
\end{rem}

\begin{prop}\label{coherence}
The functor $H\colon\mathcal{F}\to 1\cobG^\oplus$ is faithful.
\end{prop}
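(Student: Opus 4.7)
The plan is to combine the matrix calculus of Sections~\ref{injekcije}--\ref{normalizacija} with the coherence theorem for compact closed categories from \cite{KL80} and Proposition~\ref{strictification}. First, I would reduce to the case of $\oplus$-free source and target. Given $f,g\colon a\to b$ in $\mathcal{F}$ with $Hf=Hg$, Proposition~\ref{Prop:InjProj} gives
\[
f=\sum_{i,j}\iota_b^i\circ(M_f)_{ij}\circ\pi_a^j,
\]
and analogously for $g$, where the matrix entries $(M_f)_{ij}=\pi_b^i\circ f\circ\iota_a^j\colon a^j\to b^i$ have $\oplus$-free source and target by Remark~\ref{3.1}. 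Since $H$ strictly preserves the biproduct structure, $Hf=Hg$ forces $H((M_f)_{ij})=H((M_g)_{ij})$ for all $i,j$; hence it suffices to establish faithfulness of $H$ on hom-sets between $\oplus$-free objects.

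Next, I would bring each such arrow $u$ into a normal form $u=\sum_{k=1}^{n}v_k$ in which every summand $v_k$ is built only from the purely compact closed primitives ($\gamma$, $\gamma^{-1}$, $\mj$, $\alpha^{\pm 1}$, $\lambda^{\pm 1}$, $\sigma$, $\eta$, $\varepsilon$) using $\otimes$ and $\circ$ alone. Corollary~\ref{4.4} already eliminates $\oplus$, $\iota$ and $\pi$; distributivity of $\otimes$ and $\circ$ over $+$ together with the absorbing behaviour of $0$ in a category with a zero object then permits $+$ and $0$ to be pushed to the outermost level. Under $H$ each such $v_k$ becomes a singleton multiset $\{\psi_{v_k}\}$ in $1\cobG^+$, so $Hu=\{\psi_{v_1},\ldots,\psi_{v_n}\}$ as a multiset of cobordisms in $1\cobG$.

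For the central coherence step, I would invoke \cite{KL80}: the pure compact closed fragment of $\mathcal{F}$ is, by its own universal property, the free compact closed category on $\mathfrak{G}$, hence canonically isomorphic to $G\mathfrak{G}$. By the uniqueness clause of that universal property, the restriction of $H$ to this fragment coincides with the composite $G\mathfrak{G}\stackrel{F^{\mathrm{st}}}{\longrightarrow} 1\cobG\hookrightarrow 1\cobG^\oplus$, which is faithful by Proposition~\ref{strictification}. Consequently, if $u_1=\sum_k v_k$ and $u_2=\sum_l w_l$ are normalised with $Hu_1=Hu_2$, equality of the multisets $\{\psi_{v_k}\}$ and $\{\psi_{w_l}\}$ supplies a bijection $\phi$ with $\psi_{v_k}=\psi_{w_{\phi(k)}}$; faithfulness on the pure compact closed fragment then gives $v_k=w_{\phi(k)}$ in $\mathcal{F}$, and commutativity of $+$ yields $u_1=u_2$.

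The main obstacle will be the second step: turning the ``$\oplus$-, $\iota$-, $\pi$-free'' presentation provided by Corollary~\ref{4.4} into an honest outer sum of purely compact closed terms demands careful use of the distributive and zero-absorption laws encoded in Appendix~\ref{language}. Once that normal form is in hand, the remainder of the argument is a formal combination of Kelly--Laplaza coherence with the already established faithfulness of $F^{\mathrm{st}}$.
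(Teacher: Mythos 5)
Your proposal is correct and follows essentially the same route as the paper's proof: reduction to $\oplus$-free hom-sets via the injections/projections and Corollary~\ref{3.3}, normalisation of each matrix entry to an outer sum of purely compact closed terms via Corollary~\ref{4.4} and the distributivity/zero-absorption equalities, matching of summands through the multiset structure of $1\cobG^+$, and finally the Kelly--Laplaza coherence together with Proposition~\ref{strictification} to conclude equality of the individual summands. The step you flag as the main obstacle is handled in the paper exactly as you anticipate, by appeal to the equalities \ref{17}, \ref{18}, \ref{25} and \ref{27}.
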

\begin{proof}
Let $f,g\colon a\to b$ be two arrows of $\mathcal{F}$ such that $Hf=Hg$, and let ${\rm I}_a= (\iota^0_a, \ldots,\iota^{n-1}_a)$ and $\Pi_b=(\pi^0_b,\ldots,\pi^{m-1}_b)$. By Corollary~\ref{3.3} and properties of biproducts it suffices to show that, for every $0\leq i<m$ and $0\leq j<n$,
\begin{equation}\label{atom}
\pi^i_b\circ f\circ\iota^j_a= \pi^i_b\circ g\circ\iota^j_a.
\end{equation}
By Corollary~\ref{4.4}, both sides of \ref{atom} are expressible free of $\oplus$, $\iota$ and $\pi$. By relying on the equalities \ref{17}, \ref{18}, \ref{25} and \ref{27}, both sides are expressible as sums of terms, which are all free of $\oplus$, $+$ and $0,\iota,\pi$-arrows. Here, the empty sum is denoted by $0_{a^j,b^i}$.

If one side of the above equality is equal to $0_{a^j,b^i}$, then it is mapped by $H$ to the empty multiset. By functorial properties of $H$, the sum at the other side must be mapped by $H$ to the empty multiset too, which means that this sum is empty, i.e.\ it is $0_{a^j,b^i}$.

It remains the case when for $n,m\geq 1$ the left-hand side of \ref{atom} is equal to $\sum_{k=1}^{n} f_k$ and the right-hand side of this equality is equal to $\sum_{k=1}^{m} g_k$, for $f_k,g_k$ free of $\oplus$, $+$ and $0,\iota,\pi$-arrows. We have that
\[
\{Hf_k\mid 1\leq k\leq n\}=H\sum_{k=1}^n f_k=H\sum_{k=1}^m g_k= \{Hg_k\mid 1\leq k\leq m\},
\]
which means that $n=m$, and modulo some permutation of elements of these multisets, for every $1\leq k\leq n$, $Hf_k=Hg_k$. The terms $f_k$ and $g_k$ belong entirely to the compact closed fragment generated by $\mathfrak{G}$. Hence, these terms represent arrows of a compact closed category $F\mathfrak{G}$ freely generated by $\mathfrak{G}$ (see \cite[Section~4]{KL80}). The functor $H$ restricts to $F\mathfrak{G}$ as the composition of an isomorphism (from $F\mathcal{A}$ to $G\mathcal{A}$, for $\mathcal{A}$ being $\mathfrak{G}$; see \cite[Sections~3-4]{KL80}) and the faithful functor $F^{\mathrm{st}}$ of Proposition~\ref{strictification}, which means that this restriction is faithful. We conclude that $f_k$ and $g_k$ represent the same arrow of $F\mathfrak{G}$, and hence of $\mathcal{F}$.
\end{proof}

\section{Validity of categorical quantum protocols}\label{validity}
It was suggested in \cite{AC04} that compact closed categories with biproducts provide a generalisation of von Neumann's presentation of quantum mechanics in terms of Hilbert spaces, \cite{vN32}. Such an approach is called \emph{categorical quantum mechanics}. For a survey of theory of categorical quantum mechanics, we recommend \cite{AC08,V12} and references therein.

In this section we use Proposition~\ref{coherence} to establish commutativity of diagrams in the category $\mathcal{F}$, which provides a verification of the corresponding protocols from the realm of categorical quantum mechanics. All the protocols verified in \cite{AC04} require a compact closed category with dagger biproducts, possessing some additional structure. For the first two protocols below, this extra structure consists of an object $Q$ (the \emph{qubit}), an arrow from $4\cdot I$ to $Q^\ast\otimes Q$ and a scalar $s$ satisfying some conditions listed in \cite[Section~9]{AC04}. (Here we abbreviate $((I\oplus I)\oplus I)\oplus I$ by $4\cdot I$, and more generally, $n\cdot a$ and $n\cdot f$ abbreviate the $n$-fold biproducts, associated to the left, of an object $a$ and an arrow $f$ respectively.)

However, the only additional structure upon a compact closed structure with dagger biproducts important for the verification diagrams consists of four unitary isomorphisms $\beta_1, \beta_2, \beta_3, \beta_4\colon Q\to Q$. Hence, to establish that the verification diagrams are commutative in an arbitrary such category, i.e.\ that the categorical quantum protocols are correct, it suffices to establish their commutativity in the compact closed category $\mathcal{F}$ with biproducts freely generated by the free group $\mathfrak{G}$ on four generators. (Since $\beta_1$ is standardly taken to be identity, a group with three generators suffices.) The role of the generator $p$ for objects of $\mathcal{F}$ (see Section~\ref{free category}) belongs now to the qubit $Q$.

Our Proposition~\ref{coherence} enables one to check the commutativity of diagrams in $\mathcal{F}$ by ``drawing pictures'' and this is the style of verification given below. The qubit $Q$ is interpreted in $1\cobG^\oplus$ as $\ps$. At some points we have to draw matrices of pictures and this is done in the first example below, otherwise just the $ij$ element of such a matrix is described. 

\subsection{Quantum teleportation}\label{teleportation}
Quantum teleportation is a well-known quantum protocol \cite{AC04,NC10}. Assume that Alice has a qubit in some state $\ket{\psi}$, and wishes to sent this state to Bob, without any knowledge of what this state is. This is done by taking an entangled pair of qubits (EPR-pair, $\ket{\beta_1}$) and sending one to Alice and another to Bob. Then, Alice measures (in the Bell basis) her qubit and the qubit that is entangle with the one Bob has. In the next step, she communicates the result of the measurement to Bob, who applies unitary corrections to his qubit, depending on the Alice's outcome. The final result is that Bob's qubit is in the same state as Alice's qubit originally was (Alice does not have a qubit in state $\ket{\psi}$ after this protocol is done).

\begin{equation} \label{Diagram:QT}
\begin{tikzcd}[row sep=1cm, every label/.append style = {font = \small}]
Q_a \arrow[dddddd, "\Delta^4_{ac}"', bend right=95] \arrow[d, "\sigma_{I,a}\circ\lambda_a^{-1}"', "\text{ import unknown state}"] \\
Q_a\otimes I \arrow[d, "\mj_a\otimes\displaystyle\ulcorner\mj_{bc}\urcorner"', "\text{ produce EPR-pair}"]\\
Q_a\otimes (Q_b^\ast\otimes Q_c) \arrow[d, "\alpha_{a,b,c}"', "\text{ spatial delocation}"] \\
(Q_a\otimes Q_b^\ast)\otimes Q_c \arrow[d, "\displaystyle \langle\llcorner \beta_i^{ab} \lrcorner\rangle_{i=1}^{i=4}\otimes\mj_c"', "\text{ teleportation observation}"] \\
(4\cdot I)\otimes Q_c \arrow[d, "(4\cdot\lambda_c)\circ \upsilon_{4I,c}"', "\text{ classical comunication}"] \\
4\cdot Q_c \arrow[d, "\bigoplus_{i=1}^{i=4}(\beta_i^c)^{-1}"', "\text{ unitary correction}"] \\
4\cdot Q_c
\end{tikzcd}
\end{equation}

The correctness of the quantum teleportation protocol is expressed by commutativity of the diagram given in \cite[Theorem 9.1]{AC04}. One can easily factor the scalars out of both legs in this diagram just by appealing to the equalities~\ref{41}-\ref{43}. This makes the commutativity of Diagram~\ref{Diagram:QT} sufficient for the correctness of the protocol. We follow the terminology and notation introduced in \cite{AC04} in this diagram.

Note that we treat $Q_a$, $Q_b$ and $Q_c$ as three instances of the same object $Q$ of $\mathcal{F}$. Also, $\Delta^4_{a,c}$ is an abbreviation for $\langle \mj_Q,\mj_Q,\mj_Q,\mj_Q\rangle$. For example, producing the EPR-pair, means to apply the arrow $\mj_Q\otimes \ulcorner \mj_Q \urcorner$, which is interpreted in $1\cobG^\oplus$ as:
\begin{center}
\begin{tikzpicture}[scale=0.6,line cap=round,line join=round,x=1.0cm,y=1.0cm]
\draw [line width=1pt][shift={(4.0,0.0)}] plot[domain=0.0:3.141592653589793,variable=\t]({1.0*1.0*cos(\t r)+-0.0*1.0*sin(\t r)},{0.0*1.0*cos(\t r)+1.0*1.0*sin(\t r)});
\draw [line width=1pt](1.0,2.0)-- (1.0,0.0);
\draw (0.63,2.7) node[anchor=north west] {$\ps$};
\draw (0.63,0.1) node[anchor=north west] {$\ps$};
\draw (4.63,0.1) node[anchor=north west] {$\ps$};
\draw (2.63,0.1) node[anchor=north west] {$\ms$};
\draw (0.35,1.35) node[anchor=north west] {$e$};
\draw (3.63,1.6) node[anchor=north west] {$e$};
\draw [->,>=stealth,line width=1.3pt] (1.0,0.1196595073194644) -- (1.0,0.0);
\draw [->,>=stealth,line width=1.3pt] (4.992,0.05159051929475449) -- (5.0,0.0);
\draw [fill=black] (3.0,0.0) circle (2pt);
\draw [fill=black] (5.0,0.0) circle (2pt);
\draw [fill=black] (1.0,2.0) circle (2pt);
\draw [fill=black] (1.0,0.0) circle (2pt);
\end{tikzpicture}
\end{center}
This is the first nontrivial step in the diagram \ref{Diagram:QT}. (Note that since $1\cobG^\oplus$ is a strict compact closed category, the steps called ``import unknown state'' and ``spatial delocation'' are interpreted as identities in this category.)

In drawings of $\mathfrak{G}$-cobordisms, when we interpret the arrows of the diagram \ref{Diagram:QT} and the diagrams below, the orientation and the label $e$ (denoting the neutral of $\mathfrak{G}$) will be omitted. As we noted at the beginning of this section, our group $\mathfrak{G}$ is generated by the set $\Gamma=\{\beta_1, \beta_2, \beta_3, \beta_4\}$. The second nontrivial step in the diagram \ref{Diagram:QT} is the teleportation observation, given by $\langle\llcorner \beta_i \lrcorner\rangle_{i=1}^{i=4}\otimes \mj_Q$, or in terms of arrows of $1\cobG^\oplus$:
\vspace{-7mm}
\begin{center}
\[ \left(
\begin{tikzpicture}[scale=0.37,line cap=round,line join=round,x=1.0cm,y=1.0cm,baseline={([yshift=-\the\dimexpr\fontdimen22\textfont2\relax] current bounding box.center)}]
\draw [line width=1pt][shift={(1.0,2.0)}] plot[domain=3.141592653589793:6.283185307179586,variable=\t]({1.0*1.0*cos(\t r)+-0.0*1.0*sin(\t r)},{0.0*1.0*cos(\t r)+1.0*1.0*sin(\t r)});
\draw [line width=1pt][shift={(9.0,2.0)}] plot[domain=3.141592653589793:6.283185307179586,variable=\t]({1.0*1.0*cos(\t r)+-0.0*1.0*sin(\t r)},{0.0*1.0*cos(\t r)+1.0*1.0*sin(\t r)});
\draw [line width=1pt](12.0,2.0)-- (12.0,0.0);
\draw [line width=1pt](4.0,2.0)-- (4.0,0.0);
\draw [line width=1pt][shift={(17.0,2.0)}] plot[domain=3.141592653589793:6.283185307179586,variable=\t]({1.0*1.0*cos(\t r)+-0.0*1.0*sin(\t r)},{0.0*1.0*cos(\t r)+1.0*1.0*sin(\t r)});
\draw [line width=1pt](20.0,2.0)-- (20.0,0.0);
\draw [line width=1pt][shift={(25.0,2.0)}] plot[domain=3.141592653589793:6.283185307179586,variable=\t]({1.0*1.0*cos(\t r)+-0.0*1.0*sin(\t r)},{0.0*1.0*cos(\t r)+1.0*1.0*sin(\t r)});
\draw [line width=1pt](28.0,2.0)-- (28.0,0.0);
\draw (-0.6,3.1) node[anchor=north west] {$\ps$};
\draw (3.4,3.1) node[anchor=north west] {$\ps$};
\draw (3.4,0) node[anchor=north west] {$\ps$};
\draw (7.4,3.1) node[anchor=north west] {$\ps$};
\draw (11.4,3.1) node[anchor=north west] {$\ps$};
\draw (11.4,0) node[anchor=north west] {$\ps$};
\draw (15.4,3.1) node[anchor=north west] {$\ps$};
\draw (19.4,3.1) node[anchor=north west] {$\ps$};
\draw (19.4,0) node[anchor=north west] {$\ps$};
\draw (23.4,3.1) node[anchor=north west] {$\ps$};
\draw (27.4,3.1) node[anchor=north west] {$\ps$};
\draw (27.4,0) node[anchor=north west] {$\ps$};
\draw (1.4,3.1) node[anchor=north west] {$\ms$};
\draw (9.4,3.1) node[anchor=north west] {$\ms$};
\draw (17.4,3.1) node[anchor=north west] {$\ms$};
\draw (25.4,3.1) node[anchor=north west] {$\ms$};
\draw (0.4,1.2) node[anchor=north west] {$\beta_1$};
\draw (8.4,1.2) node[anchor=north west] {$\beta_2$};
\draw (16.4,1.2) node[anchor=north west] {$\beta_3$};
\draw (24.4,1.2) node[anchor=north west] {$\beta_4$};
\draw [fill=black] (2.0,2.0) circle (3pt);
\draw [fill=black] (0.0,2.0) circle (3pt);
\draw [fill=black] (10.0,2.0) circle (3pt);
\draw [fill=black] (8.0,2.0) circle (3pt);
\draw [fill=black] (12.0,2.0) circle (3pt);
\draw [fill=black] (12.0,0.0) circle (3pt);
\draw [fill=black] (4.0,2.0) circle (3pt);
\draw [fill=black] (4.0,0.0) circle (3pt);
\draw [fill=black] (18.0,2.0) circle (3pt);
\draw [fill=black] (16.0,2.0) circle (3pt);
\draw [fill=black] (20.0,2.0) circle (3pt);
\draw [fill=black] (20.0,0.0) circle (3pt);
\draw [fill=black] (26.0,2.0) circle (3pt);
\draw [fill=black] (24.0,2.0) circle (3pt);
\draw [fill=black] (28.0,2.0) circle (3pt);
\draw [fill=black] (28.0,0.0) circle (3pt);
\end{tikzpicture}
\right)^T \]
\end{center}

\noindent By composing this $4\times 1$ matrix with the $1\times 1$ matrix representing production of EPR-pair, we get
\vspace{-7mm}
\begin{center}
\[ \left(
\begin{tikzpicture}[scale=0.37,line cap=round,line join=round,x=1.0cm,y=1.0cm,baseline={([yshift=-\the\dimexpr\fontdimen22\textfont2\relax] current bounding box.center)}]
\draw [line width=1pt][shift={(1.0,2.0)}] plot[domain=3.141592653589793:6.283185307179586,variable=\t]({1.0*1.0*cos(\t r)+-0.0*1.0*sin(\t r)},{0.0*1.0*cos(\t r)+1.0*1.0*sin(\t r)});
\draw [line width=1pt][shift={(9.0,2.0)}] plot[domain=3.141592653589793:6.283185307179586,variable=\t]({1.0*1.0*cos(\t r)+-0.0*1.0*sin(\t r)},{0.0*1.0*cos(\t r)+1.0*1.0*sin(\t r)});
\draw [line width=1pt](12.0,2.0)-- (12.0,0.0);
\draw [line width=1pt](4.0,2.0)-- (4.0,0.0);
\draw [line width=1pt][shift={(17.0,2.0)}] plot[domain=3.141592653589793:6.283185307179586,variable=\t]({1.0*1.0*cos(\t r)+-0.0*1.0*sin(\t r)},{0.0*1.0*cos(\t r)+1.0*1.0*sin(\t r)});
\draw [line width=1pt](20.0,2.0)-- (20.0,0.0);
\draw [line width=1pt][shift={(25.0,2.0)}] plot[domain=3.141592653589793:6.283185307179586,variable=\t]({1.0*1.0*cos(\t r)+-0.0*1.0*sin(\t r)},{0.0*1.0*cos(\t r)+1.0*1.0*sin(\t r)});
\draw [line width=1pt](28.0,2.0)-- (28.0,0.0);
\draw (-1.1,2.6) node[anchor=north west] {$\ps$};
\draw (2.9,2.6) node[anchor=north west] {$\ps$};
\draw (3.4,0.2) node[anchor=north west] {$\ps$};
\draw (6.9,2.6) node[anchor=north west] {$\ps$};
\draw (10.9,2.6) node[anchor=north west] {$\ps$};
\draw (11.4,0.2) node[anchor=north west] {$\ps$};
\draw (14.9,2.6) node[anchor=north west] {$\ps$};
\draw (18.9,2.6) node[anchor=north west] {$\ps$};
\draw (19.4,0.2) node[anchor=north west] {$\ps$};
\draw (22.9,2.6) node[anchor=north west] {$\ps$};
\draw (26.9,2.6) node[anchor=north west] {$\ps$};
\draw (27.4,0.2) node[anchor=north west] {$\ps$};
\draw (0.9,2.6) node[anchor=north west] {$\ms$};
\draw (8.9,2.6) node[anchor=north west] {$\ms$};
\draw (16.9,2.6) node[anchor=north west] {$\ms$};
\draw (24.9,2.6) node[anchor=north west] {$\ms$};
\draw (0.4,1.2) node[anchor=north west] {$\beta_1$};
\draw (8.4,1.2) node[anchor=north west] {$\beta_2$};
\draw (16.4,1.2) node[anchor=north west] {$\beta_3$};
\draw (24.4,1.2) node[anchor=north west] {$\beta_4$};
\draw [line width=1pt](0.0,4.0)-- (0.0,2.0);
\draw [line width=1pt](8.0,4.0)-- (8.0,2.0);
\draw [line width=1pt](16.0,4.0)-- (16.0,2.0);
\draw [line width=1pt](24.0,4.0)-- (24.0,2.0);
\draw [line width=1pt][shift={(3.0,2.0)}] plot[domain=0.0:3.141592653589793,variable=\t]({1.0*1.0*cos(\t r)+-0.0*1.0*sin(\t r)},{0.0*1.0*cos(\t r)+1.0*1.0*sin(\t r)});
\draw [line width=1pt][shift={(11.0,2.0)}] plot[domain=0.0:3.141592653589793,variable=\t]({1.0*1.0*cos(\t r)+-0.0*1.0*sin(\t r)},{0.0*1.0*cos(\t r)+1.0*1.0*sin(\t r)});
\draw [line width=1pt][shift={(19.0,2.0)}] plot[domain=0.0:3.141592653589793,variable=\t]({1.0*1.0*cos(\t r)+-0.0*1.0*sin(\t r)},{0.0*1.0*cos(\t r)+1.0*1.0*sin(\t r)});
\draw [line width=1pt][shift={(27.0,2.0)}] plot[domain=0.0:3.141592653589793,variable=\t]({1.0*1.0*cos(\t r)+-0.0*1.0*sin(\t r)},{0.0*1.0*cos(\t r)+1.0*1.0*sin(\t r)});
\draw (-0.6,5.2) node[anchor=north west] {$\ps$};
\draw (7.4,5.2) node[anchor=north west] {$\ps$};
\draw (15.4,5.2) node[anchor=north west] {$\ps$};
\draw (23.4,5.2) node[anchor=north west] {$\ps$};
\draw [fill=black] (2.0,2.0) circle (3pt);
\draw [fill=black] (0.0,2.0) circle (3pt);
\draw [fill=black] (10.0,2.0) circle (3pt);
\draw [fill=black] (8.0,2.0) circle (3pt);
\draw [fill=black] (12.0,2.0) circle (3pt);
\draw [fill=black] (12.0,0.0) circle (3pt);
\draw [fill=black] (4.0,2.0) circle (3pt);
\draw [fill=black] (4.0,0.0) circle (3pt);
\draw [fill=black] (18.0,2.0) circle (3pt);
\draw [fill=black] (16.0,2.0) circle (3pt);
\draw [fill=black] (20.0,2.0) circle (3pt);
\draw [fill=black] (20.0,0.0) circle (3pt);
\draw [fill=black] (26.0,2.0) circle (3pt);
\draw [fill=black] (24.0,2.0) circle (3pt);
\draw [fill=black] (28.0,2.0) circle (3pt);
\draw [fill=black] (28.0,0.0) circle (3pt);
\draw [fill=black] (0.0,4.0) circle (3pt);
\draw [fill=black] (8.0,4.0) circle (3pt);
\draw [fill=black] (16.0,4.0) circle (3pt);
\draw [fill=black] (24.0,4.0) circle (3pt);
\end{tikzpicture}
\right)^T \]
\end{center}

\noindent At this point, Alice had performed her measurement, and communicated the result to Bob using classical interchange of bits. By Remark \ref{Remark:Distributivity}, we know that the distributivity isomorphism $\upsilon_{4I,c}$ is the identity in $1\cobG^{\oplus}$. This, together with the strictness of this category, makes the step named ``classical communication'' trivial, i.e.\ it is interpreted as identity.

Next, Bob applies unitary corrections, given by $\oplus _{i=1}^{i=4}(\beta_i)^{-1}$. In our matrix representation, $\oplus$ correspond to the direct sum of matrices. We therefore have the unitary correction
\vspace{-7mm}
\begin{center}
\[ \left(  \begin{tikzpicture}[scale=0.7,baseline={([yshift=-\the\dimexpr\fontdimen22\textfont2\relax] current
bounding box.center)}][line cap=round,line join=round,x=1.0cm,y=1.0cm]
\draw [line width=1pt] (1,8.6)-- (1,7.4);
\draw [line width=1pt] (3,6.6)-- (3,5.4);
\draw [line width=1pt] (5,4.6)-- (5,3.4);
\draw [line width=1pt] (7,2.6)-- (7,1.4);
\draw (-0.1,8.5) node[anchor=north west] {$\beta_1^{-1}$};
\draw (1.9,6.5) node[anchor=north west] {$\beta_2^{-1}$};
\draw (3.9,4.5) node[anchor=north west] {$\beta_3^{-1}$};
\draw (5.9,2.5) node[anchor=north west] {$\beta_4^{-1}$};
\draw (0.68,9.2) node[anchor=north west] {$\ps$};
\draw (0.68,7.4) node[anchor=north west] {$\ps$};
\draw (2.68,7.2) node[anchor=north west] {$\ps$};
\draw (2.68,5.4) node[anchor=north west] {$\ps$};
\draw (4.68,5.2) node[anchor=north west] {$\ps$};
\draw (4.68,3.4) node[anchor=north west] {$\ps$};
\draw (6.68,3.2) node[anchor=north west] {$\ps$};
\draw (6.68,1.4) node[anchor=north west] {$\ps$};
\draw (2.68,8.4) node[anchor=north west] {$0$};
\draw (4.68,8.4) node[anchor=north west] {$0$};
\draw (6.68,8.4) node[anchor=north west] {$0$};
\draw (4.68,6.4) node[anchor=north west] {$0$};
\draw (6.68,6.4) node[anchor=north west] {$0$};
\draw (6.68,4.4) node[anchor=north west] {$0$};
\draw (0.68,6.4) node[anchor=north west] {$0$};
\draw (0.68,4.4) node[anchor=north west] {$0$};
\draw (0.68,2.4) node[anchor=north west] {$0$};
\draw (2.68,4.4) node[anchor=north west] {$0$};
\draw (2.68,2.4) node[anchor=north west] {$0$};
\draw (4.68,2.4) node[anchor=north west] {$0$};
\begin{scriptsize}
\draw [fill=black] (1,8.6) circle (1.5pt);
\draw [fill=black] (1,7.4) circle (1.5pt);
\draw [fill=black] (3,6.6) circle (1.5pt);
\draw [fill=black] (3,5.4) circle (1.5pt);
\draw [fill=black] (5,4.6) circle (1.5pt);
\draw [fill=black] (5,3.4) circle (1.5pt);
\draw [fill=black] (7,2.6) circle (1.5pt);
\draw [fill=black] (7,1.4) circle (1.5pt);
\end{scriptsize}
\end{tikzpicture}\right) \]
\end{center}

\noindent By composing the last two matrices, we get the final result
\vspace{-7mm}
\begin{center}
\[ \left(
\begin{tikzpicture}[scale=0.37,line cap=round,line join=round,x=1.0cm,y=1.0cm,baseline={([yshift=-\the\dimexpr\fontdimen22\textfont2\relax] current bounding box.center)}]
\draw [line width=1pt][shift={(1.0,2.0)}] plot[domain=3.141592653589793:6.283185307179586,variable=\t]({1.0*1.0*cos(\t r)+-0.0*1.0*sin(\t r)},{0.0*1.0*cos(\t r)+1.0*1.0*sin(\t r)});
\draw [line width=1pt][shift={(9.0,2.0)}] plot[domain=3.141592653589793:6.283185307179586,variable=\t]({1.0*1.0*cos(\t r)+-0.0*1.0*sin(\t r)},{0.0*1.0*cos(\t r)+1.0*1.0*sin(\t r)});
\draw [line width=1pt](12.0,2.0)-- (12.0,0.0);
\draw [line width=1pt](4.0,2.0)-- (4.0,0.0);
\draw [line width=1pt][shift={(17.0,2.0)}] plot[domain=3.141592653589793:6.283185307179586,variable=\t]({1.0*1.0*cos(\t r)+-0.0*1.0*sin(\t r)},{0.0*1.0*cos(\t r)+1.0*1.0*sin(\t r)});
\draw [line width=1pt](20.0,2.0)-- (20.0,0.0);
\draw [line width=1pt][shift={(25.0,2.0)}] plot[domain=3.141592653589793:6.283185307179586,variable=\t]({1.0*1.0*cos(\t r)+-0.0*1.0*sin(\t r)},{0.0*1.0*cos(\t r)+1.0*1.0*sin(\t r)});
\draw [line width=1pt](28.0,2.0)-- (28.0,0.0);
\draw (-1.1,2.6) node[anchor=north west] {$\ps$};
\draw (2.9,2.6) node[anchor=north west] {$\ps$};
\draw (2.9,0.7) node[anchor=north west] {$\ps$};
\draw (6.9,2.6) node[anchor=north west] {$\ps$};
\draw (10.9,2.6) node[anchor=north west] {$\ps$};
\draw (10.9,0.7) node[anchor=north west] {$\ps$};
\draw (14.9,2.6) node[anchor=north west] {$\ps$};
\draw (18.9,2.6) node[anchor=north west] {$\ps$};
\draw (18.9,0.7) node[anchor=north west] {$\ps$};
\draw (22.9,2.6) node[anchor=north west] {$\ps$};
\draw (26.9,2.6) node[anchor=north west] {$\ps$};
\draw (26.9,0.7) node[anchor=north west] {$\ps$};
\draw (0.9,2.6) node[anchor=north west] {$\ms$};
\draw (8.9,2.6) node[anchor=north west] {$\ms$};
\draw (16.9,2.6) node[anchor=north west] {$\ms$};
\draw (24.9,2.6) node[anchor=north west] {$\ms$};
\draw (0.4,1.2) node[anchor=north west] {$\beta_1$};
\draw (8.4,1.2) node[anchor=north west] {$\beta_2$};
\draw (16.4,1.2) node[anchor=north west] {$\beta_3$};
\draw (24.4,1.2) node[anchor=north west] {$\beta_4$};
\draw [line width=1pt](0.0,4.0)-- (0.0,2.0);
\draw [line width=1pt](8.0,4.0)-- (8.0,2.0);
\draw [line width=1pt](16.0,4.0)-- (16.0,2.0);
\draw [line width=1pt](24.0,4.0)-- (24.0,2.0);
\draw [line width=1pt][shift={(3.0,2.0)}] plot[domain=0.0:3.141592653589793,variable=\t]({1.0*1.0*cos(\t r)+-0.0*1.0*sin(\t r)},{0.0*1.0*cos(\t r)+1.0*1.0*sin(\t r)});
\draw [line width=1pt][shift={(11.0,2.0)}] plot[domain=0.0:3.141592653589793,variable=\t]({1.0*1.0*cos(\t r)+-0.0*1.0*sin(\t r)},{0.0*1.0*cos(\t r)+1.0*1.0*sin(\t r)});
\draw [line width=1pt][shift={(19.0,2.0)}] plot[domain=0.0:3.141592653589793,variable=\t]({1.0*1.0*cos(\t r)+-0.0*1.0*sin(\t r)},{0.0*1.0*cos(\t r)+1.0*1.0*sin(\t r)});
\draw [line width=1pt][shift={(27.0,2.0)}] plot[domain=0.0:3.141592653589793,variable=\t]({1.0*1.0*cos(\t r)+-0.0*1.0*sin(\t r)},{0.0*1.0*cos(\t r)+1.0*1.0*sin(\t r)});
\draw (-0.6,5.2) node[anchor=north west] {$\ps$};
\draw (7.4,5.2) node[anchor=north west] {$\ps$};
\draw (15.4,5.2) node[anchor=north west] {$\ps$};
\draw (23.4,5.2) node[anchor=north west] {$\ps$};
\draw [line width=1pt](4.0,0.0)-- (4.0,-2.0);
\draw [line width=1pt](12.0,0.0)-- (12.0,-2.0);
\draw [line width=1pt](20.0,0.0)-- (20.0,-2.0);
\draw [line width=1pt](28.0,0.0)-- (28.0,-2.0);
\draw (3.4,-1.9) node[anchor=north west] {$\ps$};
\draw (11.4,-1.9) node[anchor=north west] {$\ps$};
\draw (19.4,-1.9) node[anchor=north west] {$\ps$};
\draw (27.4,-1.9) node[anchor=north west] {$\ps$};
\draw (2.1,-0.2) node[anchor=north west] {$\beta_1^{-1}$};
\draw (10.1,-0.2) node[anchor=north west] {$\beta_2^{-1}$};
\draw (18.1,-0.2) node[anchor=north west] {$\beta_3^{-1}$};
\draw (26.1,-0.2) node[anchor=north west] {$\beta_4^{-1}$};
\draw [fill=black] (2.0,2.0) circle (3pt);
\draw [fill=black] (0.0,2.0) circle (3pt);
\draw [fill=black] (10.0,2.0) circle (3pt);
\draw [fill=black] (8.0,2.0) circle (3pt);
\draw [fill=black] (12.0,2.0) circle (3pt);
\draw [fill=black] (12.0,0.0) circle (3pt);
\draw [fill=black] (4.0,2.0) circle (3pt);
\draw [fill=black] (4.0,0.0) circle (3pt);
\draw [fill=black] (18.0,2.0) circle (3pt);
\draw [fill=black] (16.0,2.0) circle (3pt);
\draw [fill=black] (20.0,2.0) circle (3pt);
\draw [fill=black] (20.0,0.0) circle (3pt);
\draw [fill=black] (26.0,2.0) circle (3pt);
\draw [fill=black] (24.0,2.0) circle (3pt);
\draw [fill=black] (28.0,2.0) circle (3pt);
\draw [fill=black] (28.0,0.0) circle (3pt);
\draw [fill=black] (0.0,4.0) circle (3pt);
\draw [fill=black] (8.0,4.0) circle (3pt);
\draw [fill=black] (16.0,4.0) circle (3pt);
\draw [fill=black] (24.0,4.0) circle (3pt);
\draw [fill=black] (4.0,-2.0) circle (3pt);
\draw [fill=black] (12.0,-2.0) circle (3pt);
\draw [fill=black] (20.0,-2.0) circle (3pt);
\draw [fill=black] (28.0,-2.0) circle (3pt);
\end{tikzpicture}
\right)^T \]
\end{center}
\noindent By stretching the diagrams the group elements cancel out, and we are left with the diagonal $\Delta^4=\langle \mj_Q,\mj_Q,\mj_Q,\mj_Q\rangle$.

\subsection{Entanglement Swapping}
The idea of this protocol is to, starting with two pairs of mutually entangled qubits in EPR-states, obtain again two pairs of entangled states, but with different pairing. Assume Alice, as well as Bob, share a single EPR-pair with a third person, named Charlie. Then Charlie performs a  measurement on his qubits, and via classical communication transfers information on his outcomes to other parties, upon which a unitary correction is applied. Net result of this protocol is that Alice and Bob share an entangled EPR-pair, while Charlie is left with another EPR-pair. We thus say that the entanglement is swapped. A complete description of this protocol in terms of categorical quantum mechanics is presented in \cite[Theorem~9.3]{AC04}. Again, as in Section~\ref{teleportation}, by relying on the equalities~\ref{41}-\ref{43}, one may completely neglect the role of scalars and just check the commutativity of the diagram~\ref{Diagram:ES} below for the correctness of this protocol.

Let
$\tau\colon Q_d^\ast\otimes (4\cdot ((Q_a\otimes Q_b^\ast)\otimes Q_c)) \to 4\cdot(Q_d^\ast\otimes ((Q_a\otimes Q_b*)\otimes Q_c))$ and $\upsilon\colon (4\cdot ((Q_a\otimes Q_b^\ast))\otimes Q_c \to 4\cdot ((Q_a\otimes Q_b^\ast)\otimes Q_c)$ be distributivity isomorphisms, and let
\begin{align*}
\gamma_i &= (\beta_i)_\ast, \\
\mathrm{P}_i &= \ulcorner \gamma_i \urcorner \circ \llcorner \beta_i \lrcorner, \\
\zeta_i^{ac} &= \bigoplus\nolimits_{i=1}^{i=4} ((\mj_b^\ast\otimes \beta_i)\otimes (\mj_d^\ast\otimes \beta_i^{-1})), \\
\Theta_{ab} &= \mj_d^\ast \otimes (\langle\mathrm{P}_i \rangle_{i=1}^{i=4}\otimes\mj_{c}), \\
\varphi&= (4\cdot ((\sigma_{ab}\otimes\mj_{dc})\circ \alpha^{-1}_{ab,d,c}\circ (\sigma_{d,ab}\otimes \mj_c)\circ \alpha_{d,ab,c})) \circ \tau\circ (\mj_d\otimes \upsilon), \\
\Omega_{ab} &= \langle\ulcorner \mj_{ba} \urcorner \circ \ulcorner \mj_{dc} \urcorner\rangle_{i=1}^{i=4}.
\end{align*}
The commutativity of the diagram from \cite[Theorem 9.3]{AC04} justifies the correctness of the entanglement swapping protocol. By factoring the scalars out from the legs, it reduces to the following diagram.

\begin{equation} \label{Diagram:ES}
\begin{tikzcd}[row sep=1.4cm, every label/.append style = {font = \small}]
I\otimes I \arrow[ddddd, "\Omega_{ab}"', bend right=80] \arrow[d, "\displaystyle\ulcorner\mj_{da}\urcorner \otimes \ulcorner\mj_{bc}\urcorner"', "\text{ produce EPR-pairs}"] \\
(Q_d^\ast\otimes Q_a)\otimes (Q_b^\ast\otimes Q_c) \arrow[d, "(\mj_{d}\otimes \alpha_{a,b,c})\circ \alpha^{-1}_{d,a,bc}"', "\text{ spatial delocation}"]\\
Q_d^\ast \otimes ((Q_a\otimes Q_b^\ast)\otimes Q_c) \arrow[d, "\Theta_{ab}"', "\text{ Bell-base measurement}"] \\
Q_d^\ast\otimes ((4\cdot (Q_a\otimes Q_b^\ast))\otimes Q_c) \arrow[d, "\varphi"', "\text{ classical communication}"] \\
4\cdot ((Q_b^\ast\otimes Q_a)\otimes (Q_d^\ast\otimes Q_c)) \arrow[d, "\zeta_i^{ac}"', "\text{ unitary correction}"] \\
4\cdot ((Q_b^\ast\otimes Q_a)\otimes (Q_d^\ast\otimes Q_c))
\end{tikzcd}
\end{equation}

The right-hand side of this diagram is represented in $1\cobG^{\oplus}$ by the $4\times 1$ matrix whose $i1$-entry is the following $\mathfrak{G}$-cobordism (note that we ignore associativity and distributivity isomorphisms since they are identities).
\begin{center}
\begin{tikzpicture}[scale=0.7, line cap=round,line join=round,x=1.0cm,y=1.0cm]
\draw [shift={(1.0,1.0)}][line width=1pt] plot[domain=0.0:3.141592653589793,variable=\t]({1.0*1.0*cos(\t r)+-0.0*1.0*sin(\t r)},{0.0*1.0*cos(\t r)+1.0*1.0*sin(\t r)});
\draw [shift={(1.0,3.5)}][line width=1pt] plot[domain=3.141592653589793:6.283185307179586,variable=\t]({1.0*1.0*cos(\t r)+-0.0*1.0*sin(\t r)},{0.0*1.0*cos(\t r)+1.0*1.0*sin(\t r)});
\draw [line width=1pt](0.6,3.3) node[anchor=north west] {$\beta_i$};
\draw [line width=1pt](0.5,2) node[anchor=north west] {$\beta_i^{-1}$};
\draw [line width=1pt](-2.0,3.5)-- (-2.0,1.0);
\draw [line width=1pt](4.0,3.5)-- (4.0,1.0);
\draw [line width=1pt][shift={(-1.0,3.5)}] plot[domain=0.0:3.141592653589793,variable=\t]({1.0*1.0*cos(\t r)+-0.0*1.0*sin(\t r)},{0.0*1.0*cos(\t r)+1.0*1.0*sin(\t r)});
\draw [line width=1pt][shift={(3.0,3.5)}] plot[domain=0.0:3.141592653589793,variable=\t]({1.0*1.0*cos(\t r)+-0.0*1.0*sin(\t r)},{0.0*1.0*cos(\t r)+1.0*1.0*sin(\t r)});
\draw [line width=1pt](-2.0,-2.0)-- (-2.0,-3.5);
\draw [line width=1pt](0.0,-2.0)-- (0.0,-3.5);
\draw [line width=1pt](2.0,-2.0)-- (2.0,-3.5);
\draw [line width=1pt](4.0,-2.0)-- (4.0,-3.5);
\draw [line width=1pt](0.0,1.0)-- (-2.0,-0.5);
\draw [line width=1pt](2.0,1.0)-- (0.0,-0.5);
\draw [line width=1pt](0.0,-0.5)-- (-2.0,-2.0);
\draw [line width=1pt](-2.0,1.0)-- (2.0,-0.5);
\draw [line width=1pt](4.0,1.0)-- (4.0,-0.5);
\draw [line width=1pt](4.0,-0.5)-- (4.0,-2.0);
\draw [line width=1pt](-2.0,-0.5)-- (0.0,-2.0);
\draw [line width=1pt](2.0,-0.5)-- (2.0,-2.0);
\draw (3.9,4.4) node[anchor=north west] {$\ulcorner 1_{da}\urcorner \otimes \ulcorner 1_{bc}\urcorner$};
\draw (5,2.7) node[anchor=north west] {$\Theta_{ab}$};
\draw (5,-2.5) node[anchor=north west] {$\zeta_i^{ac}$};
\draw (4.4,0.7) node[anchor=north west] {$\sigma_{d,ab}\otimes 1_c$};
\draw (4.4,-0.8) node[anchor=north west] {$\sigma_{a,b}\otimes 1_{dc}$};
\draw [dash pattern=on 3pt off 3pt] (-3.0,3.5)-- (7.0,3.5);
\draw [dash pattern=on 3pt off 3pt] (7.0,1.0)-- (-3.0,1.0);
\draw [dash pattern=on 3pt off 3pt] (-3.0,-0.5)-- (7.0,-0.5);
\draw [dash pattern=on 3pt off 3pt] (-3.0,-2.0)-- (7.0,-2.0);
\draw (-0.7,-2.5) node[anchor=north west] {$\beta_i$};
\draw (2.9,-2.5) node[anchor=north west] {$\beta_i^{-1}$};
\draw (-2.6,3.6) node[anchor=north west] {$\ms$};
\draw (1.4,3.6) node[anchor=north west] {$\ms$};
\draw (-2.3,1) node[anchor=north west] {$\ms$};
\draw (1.7,1) node[anchor=north west] {$\ms$};
\draw (-0.3,-0.4) node[anchor=north west] {$\ms$};
\draw (1.4,-0.4) node[anchor=north west] {$\ms$};
\draw (-2.6,-1.9) node[anchor=north west] {$\ms$};
\draw (1.4,-1.9) node[anchor=north west] {$\ms$};
\draw (-2.3,-3.5) node[anchor=north west] {$\ms$};
\draw (1.7,-3.5) node[anchor=north west] {$\ms$};
\draw (-0.5,3.6) node[anchor=north west] {$\ps$};
\draw (3.4,3.6) node[anchor=north west] {$\ps$};
\draw (-0.3,1) node[anchor=north west] {$\ps$};
\draw (3.4,1) node[anchor=north west] {$\ps$};
\draw (-2.3,-0.5) node[anchor=north west] {$\ps$};
\draw (3.4,-0.5) node[anchor=north west] {$\ps$};
\draw (-0.5,-1.9) node[anchor=north west] {$\ps$};
\draw (3.4,-1.9) node[anchor=north west] {$\ps$};
\draw (-0.3,-3.5) node[anchor=north west] {$\ps$};
\draw (3.7,-3.5) node[anchor=north west] {$\ps$};
\begin{scriptsize}
\draw [fill=black] (0.0,1.0) circle (2pt);
\draw [fill=black] (2.0,1.0) circle (2pt);
\draw [fill=black] (2.0,3.5) circle (2pt);
\draw [fill=black] (0.0,3.5) circle (2pt);
\draw [fill=black] (-2.0,3.5) circle (2pt);
\draw [fill=black] (-2.0,1.0) circle (2pt);
\draw [fill=black] (4.0,3.5) circle (2pt);
\draw [fill=black] (4.0,1.0) circle (2pt);
\draw [fill=black] (2.0,-0.5) circle (2pt);
\draw [fill=black] (-2.0,-0.5) circle (2pt);
\draw [fill=black] (0.0,-0.5) circle (2pt);
\draw [fill=black] (4.0,-0.5) circle (2pt);
\draw [fill=black] (4.0,-2.0) circle (2pt);
\draw [fill=black] (2.0,-2.0) circle (2pt);
\draw [fill=black] (0.0,-2.0) circle (2pt);
\draw [fill=black] (-2.0,-2.0) circle (2pt);
\draw [fill=black] (-2.0,-3.5) circle (2pt);
\draw [fill=black] (0.0,-3.5) circle (2pt);
\draw [fill=black] (2.0,-3.5) circle (2pt);
\draw [fill=black] (4.0,-3.5) circle (2pt);
\end{scriptsize}
\end{tikzpicture}
\end{center}
By stretching the above diagram and cancelling $\beta_i$ and $\beta_i^{-1}$, we are left with the following $\mathfrak{G}$-cobordism.

\begin{center}
\begin{tikzpicture}[scale=0.7,line cap=round,line join=round,x=1.0cm,y=1.0cm]
\draw [line width=1pt][shift={(9.0,1.0)}] plot[domain=0.0:3.141592653589793,variable=\t]({1.0*1.0*cos(\t r)+-0.0*1.0*sin(\t r)},{0.0*1.0*cos(\t r)+1.0*1.0*sin(\t r)});
\draw [line width=1pt][shift={(12.0,1.0)}] plot[domain=0.0:3.141592653589793,variable=\t]({1.0*1.0*cos(\t r)+-0.0*1.0*sin(\t r)},{0.0*1.0*cos(\t r)+1.0*1.0*sin(\t r)});
\draw (7.7,1) node[anchor=north west] {$\ms$};
\draw (10.7,1) node[anchor=north west] {$\ms$};
\draw (9.7,1) node[anchor=north west] {$\ps$};
\draw (12.7,1) node[anchor=north west] {$\ps$};
\begin{scriptsize}
\draw [fill=black] (8.0,1.0) circle (2pt);
\draw [fill=black] (10.0,1.0) circle (2pt);
\draw [fill=black] (11.0,1.0) circle (2pt);
\draw [fill=black] (13.0,1.0) circle (2pt);
\end{scriptsize}
\end{tikzpicture}
\end{center}

On the other side, $\Omega_{ab}$ is represented in $1\cobG^{\oplus}$ by $4\times 1$ matrix, whose $i1$-entry is exactly the above $\mathfrak{G}$-cobordism. Due to Proposition \ref{coherence}, this proves the commutativity of the diagram \ref{Diagram:ES}.

\subsection{Superdense coding}\label{superdense}
In this section, we will apply our diagrammatic verification to another protocol, called a superdense coding, \cite{NC10} (sometimes referred to as a dense coding). This quantum algorithm can be considered as an opposite of the quantum teleportation. The idea is to transfer some amount of classical information, using qubits. A review of this protocol can be found in \cite{V12}, where its validity was shown in a similar manner.

The validity of this protocol is expressed in the categorical setting by the commutativity of a diagram in which some special scalars, namely traces of some arrows, occur. Every compact closed category can be lifted to the traced category by a suitable definition of a categorical trace. This can be achieved as follows. Let $f\colon a\to a$ be an arrow in a compact closed category. The scalar $\mathrm{Tr}(f)\colon I\rightarrow I$ is defined as\footnote{More generaly, categorical trace corresponds to the partial trace in Hilbert space picture, though we will not review this here, as our interest lies only in pure states.}
\begin{equation}
    \mathrm{Tr}(f)= \varepsilon_a \circ (f\otimes a^\ast) \circ \sigma_{a^*,a}\circ \eta_a.
\end{equation}
In terms of diagrams, we have
\begin{center}
    \begin{tikzpicture}[scale=0.7][line cap=round,line join=round,>=triangle 45,x=1.0cm,y=1.0cm]
\draw [shift={(2.,2.)},line width=1.pt]  plot[domain=0.:3.141592653589793,variable=\t]({1.*1.*cos(\t r)+0.*1.*sin(\t r)},{0.*1.*cos(\t r)+1.*1.*sin(\t r)});
\draw [line width=1.pt] (1.,2.)-- (3.,1.);
\draw [line width=1.pt] (3.,2.)-- (1.,1.);
\draw [line width=1.pt] (1.,1.)-- (1.,0.);
\draw [shift={(2.,0.)},line width=1.pt]  plot[domain=3.141592653589793:6.283185307179586,variable=\t]({1.*1.*cos(\t r)+0.*1.*sin(\t r)},{0.*1.*cos(\t r)+1.*1.*sin(\t r)});
\draw [line width=1.pt] (3.,0.)-- (3.,1.);
\draw [line width=1.pt] (9.,1.) circle (1.40356688476182cm);
\draw (0.3988007525566426,0.6481020297918041) node[anchor=north west] {$f$};
\draw (7.055729924061316,0.9337258383990148) node[anchor=north west] {$f$};
\draw [->,>=stealth][line width=1.pt] (4.175307573377374,1.0206548236272963)-- (6.534808601002162,1.0206548236272963);
\end{tikzpicture}
\end{center}

A category appropriate for the superdense coding requires the same structure as in the first two protocols. Moreover, the following conditions must be satisfied. If $i\neq j$, then $\mathrm{Tr}(\beta_i\beta_j^\dagger)=0_{I,I}$, and $\mathrm{Tr}(\mj_Q)\neq 0_{I,I}$ (see Appendix \ref{skalari} for the details why we demand this condition to be satisfied). With this in mind the arrow $\Xi\colon I \to 16\cdot I$ defined as
\[
\langle \mathrm{Tr}(\beta_1\beta_1^\dagger), \mathrm{Tr}(\beta_1\beta_2^\dagger), \mathrm{Tr}(\beta_1\beta_3^\dagger), \mathrm{Tr}(\beta_1\beta_4^\dagger),\ldots,
\mathrm{Tr}(\beta_4\beta_1^\dagger),
\mathrm{Tr}(\beta_4\beta_2^\dagger), \mathrm{Tr}(\beta_4\beta_3^\dagger), \mathrm{Tr}(\beta_4\beta_4^\dagger)\rangle
\]
is actually $\langle t,0,0,0,0,t,0,0,0,0,t,0,0,0,0,t\rangle$, for $t=\mathrm{Tr}(\mj_Q)$. The assumption above also enables Bob to make a distinction between the four quadruples of scalars in this row.

Our task is to show that the following diagram, which verifies the superdense coding protocol, commutes.
\begin{equation}
\begin{tikzcd}[row sep=1.4cm, every label/.append style = {font = \small}]
I \arrow[dddd, "\Xi"', bend right=90] \arrow[d, "\displaystyle\ulcorner\mj_{ab}\urcorner"', "\text{ preparation of EPR-pair}"] \\
Q_a^\ast\otimes Q_b \arrow[d, "\displaystyle \langle (\beta_i)_\ast \rangle_{i=1}^{i=4}\otimes\mj_b"', "\text{ selection of classical information}"]\\
(4\cdot Q_a^\ast)\otimes Q_b \arrow[d, "(4\cdot\sigma_{ab})\circ\upsilon_{4a,b}"', "\text{ spatial delocation}"] \\
4\cdot(Q_b \otimes Q_a^\ast)\arrow[d, "\displaystyle \langle\llcorner \beta_i^{ab} \lrcorner\rangle_{i=1}^{i=4}"', "\text{ observation}"] \\
16\cdot I
\end{tikzcd}
\end{equation}
Here, again, the first step is the EPR-pair production, achieved by a cap diagram.
\begin{center}
\begin{tikzpicture}[scale=0.7,line cap=round,line join=round,x=1.0cm,y=1.0cm]
\draw [shift={(2.,1.)},line width=1.pt]  plot[domain=0.:3.141592653589793,variable=\t]({1.*1.*cos(\t r)+0.*1.*sin(\t r)},{0.*1.*cos(\t r)+1.*1.*sin(\t r)});
\draw (0.7,1) node[anchor=north west] {$\ms$};
\draw (2.7,1) node[anchor=north west] {$\ps$};
\draw [fill=black] (1,1) circle (2pt);
\draw [fill=black] (3,1) circle (2pt);
\end{tikzpicture}
\end{center}
\vspace{-3mm}
One qubit is located at Alice's point, and another at Bob's. Alice then applies an unitary transformation to her qubit, depending on the classical infromation she wants to communicate. This is achieved by $\langle (\beta_i)_\ast \rangle_{i=1}^{i=4}\otimes\mj_b$
By composing the first two arrows, we get a $4\times 1$ matrix, whose $i1$-entry is given by a following arrow.
\begin{center}
\begin{tikzpicture}[scale=0.7][line cap=round,line join=round,x=1.0cm,y=1.0cm]
\draw [shift={(2.,2.)},line width=1.pt]  plot[domain=0.:3.141592653589793,variable=\t]({1.*1.*cos(\t r)+0.*1.*sin(\t r)},{0.*1.*cos(\t r)+1.*1.*sin(\t r)});
\draw [line width=1.pt] (1.,2.)-- (1.,1.);
\draw [line width=1.pt] (3.,2.)-- (3.,1.);
\draw (-0.1,2) node[anchor=north west] {$\beta_i^{-1}$};
\draw (0.7,1) node[anchor=north west] {$\ms$};
\draw (2.7,1) node[anchor=north west] {$\ps$};
\draw (1,2.3) node[anchor=north west] {$\ms$};
\draw (3,2.3) node[anchor=north west] {$\ps$};
\begin{scriptsize}
\draw [fill=black] (1,1) circle (2pt);
\draw [fill=black] (3,1) circle (2pt);
\draw [fill=black] (1,2) circle (2pt);
\draw [fill=black] (3,2) circle (2pt);
\end{scriptsize}
\end{tikzpicture}
\end{center}
\vspace{-3mm}
Spatial delocation is represented by a transposition, and fter its application we obtain a matrix with $i1$-entry given by
\begin{center}
\begin{tikzpicture}[scale=0.7][line cap=round,line join=round,x=1.0cm,y=1.0cm]
\draw [shift={(0.,2.)},line width=1pt]  plot[domain=0.:3.141592653589793,variable=\t]({1.*1.*cos(\t r)+0.*1.*sin(\t r)},{0.*1.*cos(\t r)+1.*1.*sin(\t r)});
\draw [line width=1.pt] (-1.,2.)-- (-1.,1.);
\draw [line width=1.pt] (1.,2.)-- (1.,1.);
\draw [line width=1.pt] (-1.,1.)-- (1.,0.);
\draw [line width=1.pt] (1.,1.)-- (-1.,0.);
\draw (0.7,0) node[anchor=north west] {$\ms$};
\draw (-1.3,0) node[anchor=north west] {$\ps$};
\draw (0.7,1) node[anchor=north west] {$\ps$};
\draw (-1.3,1) node[anchor=north west] {$\ms$};
\draw (1,2.3) node[anchor=north west] {$\ps$};
\draw (-1,2.3) node[anchor=north west] {$\ms$};
\draw (-2.1,2) node[anchor=north west] {$\beta_i^{-1}$};
\begin{scriptsize}
\draw [fill=black] (1,0) circle (2pt);
\draw [fill=black] (-1,0) circle (2pt);
\draw [fill=black] (1,1) circle (2pt);
\draw [fill=black] (-1,1) circle (2pt);
\draw [fill=black] (1,2) circle (2pt);
\draw [fill=black] (-1,2) circle (2pt);
\end{scriptsize}
\end{tikzpicture}
\end{center}
\vspace{-3mm}
Finally, Alice sends her qubit to Bob, who performs an entangled state measurement, given by a suitable coname. The result is  a $16\times 1$ matrix, whose $(4(i-1)+j)1$- element is given by the $\mathfrak{G}$-circle
\begin{center}
\begin{tikzpicture}[scale=0.7][line cap=round,line join=round,x=1.0cm,y=1.0cm]
\draw [shift={(4.,2.)},line width=1.pt]  plot[domain=0.:3.141592653589793,variable=\t]({1.*1.*cos(\t r)+0.*1.*sin(\t r)},{0.*1.*cos(\t r)+1.*1.*sin(\t r)});
\draw [line width=1.pt] (3.,2.)-- (3.,1.);
\draw [line width=1.pt] (5.,2.)-- (5.,1.);
\draw [shift={(4.,1.)},line width=1.pt]  plot[domain=3.141592653589793:6.283185307179586,variable=\t]({1.*1.*cos(\t r)+0.*1.*sin(\t r)},{0.*1.*cos(\t r)+1.*1.*sin(\t r)});
\draw (1.9,1.8) node[anchor=north west] {$\beta_i^{-1}$};
\draw (3.5,0) node[anchor=north west] {$\beta_j$};
\end{tikzpicture}
\end{center}
\vspace{-3mm}
and the same matrix is obtained by interpreting the arrow $\Xi\colon I \to 16\cdot I$ in the category $1\cobG^{\oplus}$. The additional assumptions on the compact closed structure, listed in the paragraph where the arrow $\Xi$ is defined, enable Bob to distinguish between different Alice's messages.

\section{Omitting labels}\label{labels}

This section, in an informal way, illustrates possibility of elimination of labels assigned to cobordisms in graphical categories, introduced in Section~\ref{graficki}, that serve as models for syntactical categories, introduced in Section~\ref{free category}. In the case when we have some additional equations concerning the unitary endomorphisms from $\Gamma$, it may produce a finite group of automorphisms of $p$. Then it is not necessary to increase the dimension of cobordisms since such a group appears as a subgroup of a symmetric group $S_n$, for some $n$. One can represent $p$ by $n$ points and interpret every element of $\Gamma$ as the corresponding permutation.

For example, the teleportation protocol requires the dihedral group $D_4$, which is a subgroup of $S_4$. This means that it is sufficient to define the syntactical category $\mathcal{F}^\dagger$ so that $\Gamma$ is a set of two elements that satisfy, besides the equalities \ref{33} and \ref{34}, the equalities of the standard presentation of $D_4$. In this case, the category $1\cobG^\oplus$ should be replaced by a thickened version. This means that before labeling, every segment is replaced by four parallel threads (the diagram is thickened), and every label is replaced by the corresponding permutation of four threads. The two elements of $\Gamma$ correspond to the following permutations.

\begin{center}
\includegraphics{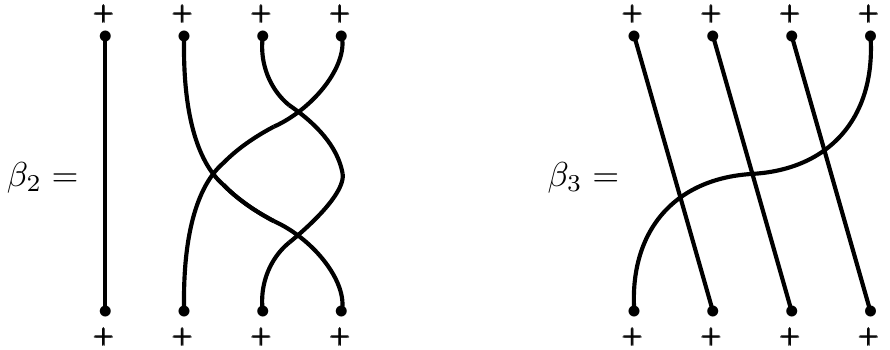}
\end{center}

However, there is no possibility to interpret an infinite group in such a way.

By increasing dimension by one, according to remark given in the penultimate paragraph on page~60 (after Proposition~1.4.9) of \cite{K03}, the situation remains the same. This remark says that the only invertible 2-cobordisms are the permutation cobordisms.

Hence, for the interpretation of an infinite group generated by $\Gamma$, one has to consider 3-cobordisms. We rely here on \cite[Definition~2.3]{J18} in order to introduce cobordisms that replace $\mathfrak{G}$-cobordisms from Section~\ref{graficki}. Namely, for every orientation preserving homeomorphism $h\colon \Sigma_g\to\Sigma_g$, where $\Sigma_g$ is a closed oriented surface of genus $g$, there is a cobordism $(\Sigma_g\times I, f_0,f_1)$, where $f_0(x)=(x,0)$ and $f_1(x)=(h(x),1)$. Two such cobordisms, corresponding to homeomorphisms $h$ and $h'$ respectively, are equivalent if and only if $h$ and $h'$ are pseudo-isotopic. According to \cite{E66}, this is equivalent to the fact that $h$ and $h'$ are isotopic.

By applying technique introduced in \cite{L62}, the cobordism $(\Sigma_g\times I, f_0,f_1)$ is equivalent to the cobordism $(M, f_0,f_1')$, where $M$ is $\Sigma_g\times I$ with some extra surgery, and $f_1'(x)=(x,1)$. Here we will illustrate just the case of the group freely generated by one generator.
In the case of a group freely generated by more generators, according to comments from the preceding paragraph, the results concerning free subgroups of the mapping class groups of surfaces, obtained in \cite{I92}, \cite{I96} and \cite{AAS07} are relevant.

In our example we suggest to replace the $\mathfrak{G}$-segment labeled by the generator of $\mathfrak{G}$, i.e. a $\mathfrak{G}$-cobordism introduced in Section~\ref{graficki}, by a 3-cobordism $C$ obtained as follows. For $T^2$ being the 2-dimensional torus, the underlying manifold of $C$ is $T^2\times I$ with some additional surgery. Moreover, for $i\in\{0,1\}$, the embeddings $f_i$ are of the form $f_i(x)=(x,i)$. In order to present such a cobordism we use the diagrammatical language introduced in \cite{NPZ} (see Figure~\ref{generator} for a presentation of $C$).

\begin{figure}[!h]
\centering
\includegraphics{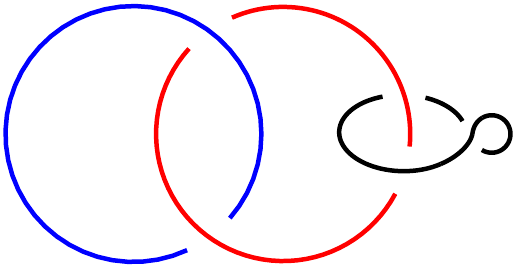}
\caption{Cobordism $C$}
\label{generator}
\end{figure}

Roughly speaking, tubular neighbourhoods of the red and the blue circle are removed from $S^3$ and the surgery along the black unknot is performed. Note that the twist of this unknot indicates the framing 1 of this surgery component. We refer to \cite{NPZ} for details of the interpretation of such diagrams. The rules for composing diagrams say that $C\circ C$ is presented by the diagram at the left-hand side of Figure~\ref{inverz} and that the $\mathfrak{G}$-segment labeled by the inverse of the generator of $\mathfrak{G}$ should be replaced by the cobordism $C^{-1}$ illustrated at the right-hand side of Figure~\ref{inverz}.

\begin{figure}[!h]
\centering
\includegraphics{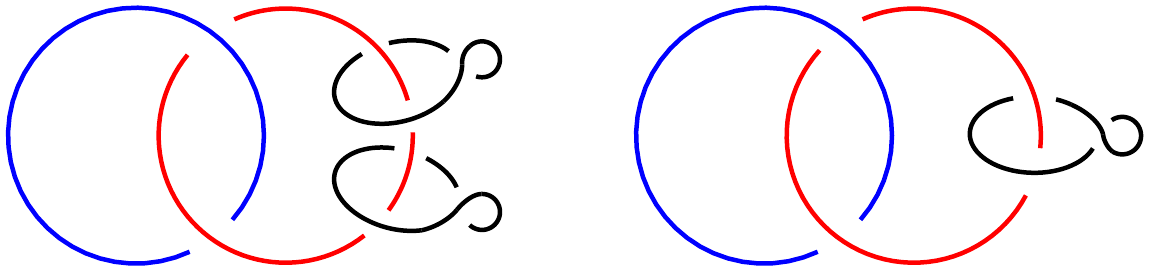}
\caption{Cobordisms $C\circ C$ and $C^{-1}$}
\label{inverz}
\end{figure}

The same rules say that the $\mathfrak{G}$-circle labeled by the product of the generator with itself should be replaced by the cobordism (closed 3-manifold) illustrated in Figure~\ref{krug}.

\begin{figure}[!ht]
\includegraphics{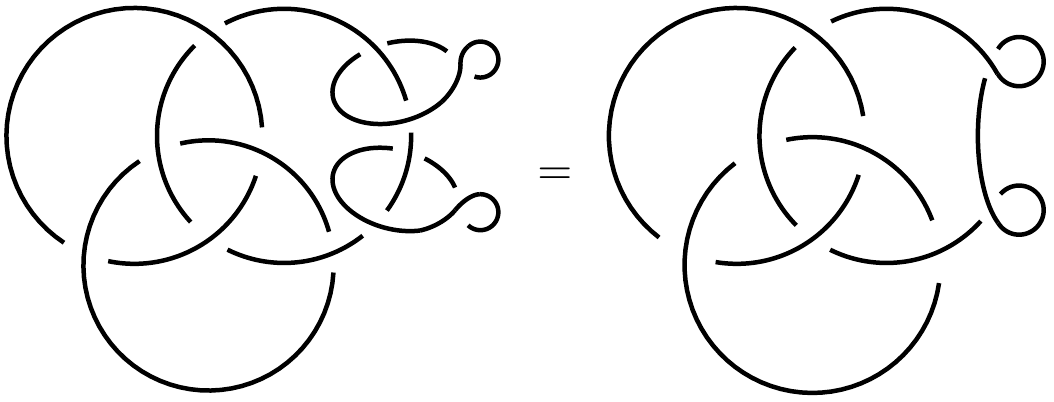}
\caption{Cobordism corresponding to the labeled $\mathfrak{G}$-circle}
\label{krug}
\end{figure}

Though this switching to dimension three could be less practical at some points, it could bring some new insight to the subject through the variant of Kirby calculus introduced in \cite{FGOP}. Our plan for a future work is to investigate this 3-dimensional calculus.

\section{Concluding remarks}\label{conclusion}

After the introduction of categorical quantum mechanics, it is natural to seek for a different dagger compact closed categories with biproducts, in order to check whether they can sustain quantum protocols, as quantum teleportation. The possible complication is the existence of a base. Abstractly, base can be defined using biproducts: we demand existence of unitary arrows $I\oplus I\rightarrow Q$. The problem with the category of cobordisms is that there does not seem to be enough options to construct the desired unitary morphism. This was alluded, in a slightly different context, in \cite{BA04}. Luckily, in order to verify protocols as quantum teleportation, it is not mandatory to use the described morphism.

Furthermore, in low dimensions, it is hopeless to try to accommodate different unitary transformations present in quantum protocols as different cobordisms. In order to heal this problem, we introduced a group structure $\mathfrak{G}$. We believe that the approach suggested in the preceding section could provide a solution for these problems.

Of course, this raises some conceptual questions. First, by identifying the qubit state space with $\ps$ (or $\ms$), we are not able to use our graphical language to define states, i.e.\ morphisms of the form $I\rightarrow Q$. In all mentioned quantum protocols, this was not an issue, as we used names to create entangled states, and this can be seen in $1\mathbf{Cob}_{\mathfrak{G}}$ language. In order to circumvent this issue, one could increase the dimension of cobordisms as suggested in the preceding section, or to take zero-dimensional spheres, i.e. the two element sequences $\ps\hspace{1mm}\ms$ to represent state spaces. Then, one has the possibility to introduce morphisms that define states. Also, we can use this new type of qubits to define measurements on a single qubit, not just on an entangled pair. Considerations of this type could be of interest when dealing with single-particle protocols \cite{DEL18}.

We conclude this section with a comment concerning the generality of quantum protocols brought by replacing the Hilbert spaces by objects of a compact closed category. It is known that (with minor provisos) all the 1-dimensional topological quantum field theories, i.e.\ functors from the category $1\cob$ to the category of finite dimensional vector spaces over a field, are faithful according to \cite{T19}. However, this does not mean that the whole $1\cobG^\oplus$ could be faithfully represented by matrices over a field. On the other hand, since protocols do not use the full strength of $1\cobG^\oplus$, one could expect that some could be verified  by relying on the matrix calculus (working again in the skeleton of $\fdhilb$ with chosen bases). This could be an advantage concerning computational issues of the problem.

\appendix
\renewcommand{\thefigure}{A\arabic{figure}}
\renewcommand{\theequation}{\Alph{section}.\arabic{equation}}
\section*{Appendix}

\section{The language and the equations for dagger compact closed categories with dagger biproducts}\label{language}

Our choice of a language for dagger compact closed categories with dagger biproducts is the one in which enrichment over $\cmd$ is primitive and not derived from the biproduct structure. Such a language is siutable for the proofs of our results. A dagger compact closed category with dagger biproducts $\mathcal{A}$ consists of a set of objects and a set of arrows. There are two functions (\emph{source} and \emph{target}) from the set of arrows to the set of objects of $\mathcal{A}$. For every object $a$ of $\mathcal{A}$ there is the identity arrow $\mj_a\colon a\to a$. The set of objects includes two distinguished objects $I$ and $0$. Arrows $f\colon a\to b$ and $g\colon b\to c$ \emph{compose} to give $g\circ f\colon a\to c$, and arrows $f_1,f_2\colon a\to b$ \emph{add} to give $f_1+f_2\colon a\to b$. For every object $a$ of $\mathcal{A}$, there is the object $a^\ast$, and for every pair of objects $a$ and $b$ of $\mathcal{A}$, there are the objects $a\otimes b$ and $a\oplus b$. Also, for every arrow $f\colon a\to b$, there is the arrow $f^\dagger\colon b\to a$, and for every pair of arrows $f\colon a\to a'$ and $g\colon b\to b'$ there are the arrows $f\otimes g\colon a\otimes b\to a'\otimes b'$ and $f\oplus g\colon a\oplus b\to a'\oplus b'$. In $\mathcal{A}$ we have the following families of arrows indexed by its objects.

\begin{align*}
&\alpha_{a,b,c} \colon a\otimes(b\otimes c)\to (a\otimes b)\otimes c, \quad &&\alpha^{-1}_{a,b,c}\colon (a\otimes b)\otimes c\to a\otimes(b\otimes c),
\\[1ex]
& \lambda_a \colon I\otimes a\to a, \quad &&\lambda^{-1}_a\colon a\to I\otimes a,
\\[1ex]
& \sigma_{a,b}\colon a\otimes b\to b\otimes a,
\\[1ex]
& \eta_a\colon I\to a^\ast\otimes a, \quad &&\varepsilon_a\colon a\otimes a^\ast\to I
\\[1ex]
& \pi^1_{a,b} \colon a\oplus b\to a,\quad && \iota^1_{a,b}\colon a\to a\oplus b,
\\[1ex]
& \pi^2_{a,b}\colon a\oplus b\to b,\quad && \iota^2_{a,b}\colon b\to a\oplus b,
\\[1ex]
& 0_{a,b} \colon a\to b.
\end{align*}

The arrows of $\mathcal{A}$ should satisfy the following equalities:
\begin{equation}\label{1}
   f\circ \mj_a=f=\mj_{a'}\circ f,\quad (h\circ g)\circ f=h\circ(g\circ f),
\end{equation}
\begin{equation}\label{2}
   \mj_a\otimes \mj_b=\mj_{a\otimes b},\quad (f_2\otimes g_2)\circ(f_1\otimes g_1)= (f_2\circ f_1)\otimes(g_2\circ g_1),
\end{equation}

\begin{equation}\label{5}
   \begin{array}{c}((f\otimes g)\otimes h)\circ\alpha_{a,b,c}= \alpha_{a',b',c'}\circ(f\otimes(g\otimes h)),\\[1ex] \alpha^{-1}_{a,b,c}\circ\alpha_{a,b,c}=\mj_{a\otimes(b\otimes c)},\quad \alpha_{a,b,c}\circ\alpha^{-1}_{a,b,c}=\mj_{(a\otimes b)\otimes c},
   \end{array}
\end{equation}
\begin{equation}\label{6}
   f\circ\lambda_a=\lambda_{a'}\circ(I\otimes f),\quad \lambda^{-1}_a\circ\lambda_a=\mj_{I\otimes a},\quad \lambda_a\circ\lambda^{-1}_a=\mj_a,
\end{equation}
\begin{equation}\label{7}
   (g\otimes f)\circ\sigma_{a,b}=\sigma_{a',b'}\circ(f\otimes g),\quad \sigma_{b,a}\circ\sigma_{a,b}=\mj_{a\otimes b},
\end{equation}

\begin{equation}\label{19}
   \alpha_{a\otimes b,c,d}\circ\alpha_{a,b,c\otimes d}=(\alpha_{a,b,c}\otimes d)\circ\alpha_{a,b\otimes c,d}\circ (a\otimes\alpha_{b,c,d}),
\end{equation}
\begin{equation}\label{20}
   \lambda_{a\otimes b}=(\lambda_a\otimes b)\circ\alpha_{I,a,b},
\end{equation}
\begin{equation}\label{21}
   \alpha_{c,a,b}\circ\sigma_{a\otimes b,c}\circ \alpha_{a,b,c}= (\sigma_{a,c}\otimes b)\circ \alpha_{a,c,b} \circ (a\otimes \sigma_{b,c}),
\end{equation}

\begin{equation}\label{23}
(a^\ast\otimes \varepsilon)\circ\alpha^{-1}_{a^\ast,a,a^\ast}\circ (\eta\otimes a^\ast)=\sigma_{I,a^\ast},\quad (\varepsilon\otimes a)\circ\alpha_{a,a^\ast,a}\circ(a\otimes\eta)=\sigma_{a,I},
\end{equation}

\begin{equation}\label{16}
   f_1+(f_2+f_3)=(f_1+f_2)+f_3,\quad f_1+f_2=f_2+f_1,\quad f+0_{a,a'}=f,
\end{equation}
\begin{equation}\label{17}
   (g_1+g_2)\circ f=g_1\circ f + g_2\circ f,\quad g\circ(f_1+f_2)=g\circ f_1 + g\circ f_2,
\end{equation}
\begin{equation}\label{18}
   0_{a',b}\circ f=0_{a,b},\quad f\circ 0_{b,a}=0_{b,a'},
\end{equation}

\begin{equation}\label{3}
   \mj_a\oplus \mj_b=\mj_{a\oplus b},\quad (f_2\oplus g_2)\circ(f_1\oplus g_1)= (f_2\circ f_1)\oplus(g_2\circ g_1),
\end{equation}
\begin{equation}\label{10}
   (f\oplus g)\circ\iota^1_{a,b}=\iota^1_{a',b'}\circ f,\quad (f\oplus g)\circ\iota^2_{a,b}=\iota^2_{a',b'}\circ g,
\end{equation}
\begin{equation}\label{11}
   f\circ\pi^1_{a,b}=\pi^1_{a',b'}\circ (f\oplus g),\quad g\circ\pi^2_{a,b}=\pi^2_{a',b'}\circ (f\oplus g),
\end{equation}
\begin{equation}\label{13}
   \pi^1_{a,b}\circ\iota^1_{a,b}=\mj_a,\quad \pi^2_{a,b}\circ\iota^2_{a,b}=\mj_b,
\end{equation}
\begin{equation}\label{14}
   \pi^2_{a,b}\circ\iota^1_{a,b}=0_{a,b},\quad \pi^1_{a,b}\circ\iota^2_{a,b}=0_{b,a},
\end{equation}
\begin{equation}\label{15}
   \iota^1_{a,b}\circ \pi^1_{a,b} + \iota^2_{a,b}\circ \pi^2_{a,b}=\mj_{a\oplus b}.
\end{equation}

\begin{equation}\label{22}
   0_{0,0}=\mj_0,
\end{equation}

\begin{equation}\label{4}
   \mj_a^\dagger=\mj_a,\quad (g\circ f)^\dagger= f^\dagger\circ g^\dagger,\quad f^{\dagger\dagger}=f,
\end{equation}
\begin{equation}\label{28}
   (f\otimes g)^\dagger=f^\dagger\otimes g^\dagger,
\end{equation}
\begin{equation}\label{29}
   \alpha_{a,b,c}^\dagger=\alpha^{-1}_{a,b,c},\quad \lambda_a^\dagger=\lambda^{-1}_a,\quad \sigma_{a,b}^\dagger=\sigma_{b,a},
\end{equation}
\begin{equation}\label{30}
   \varepsilon^\dagger=\sigma_{a^\ast,a}\circ\eta,
\end{equation}
\begin{equation}\label{31}
   (\pi^1_{a,b})^\dagger=\iota^1_{a,b},\quad  (\pi^2_{a,b})^\dagger=\iota^2_{a,b}.
\end{equation}

The following equalities are derivable from \ref{1}-\ref{31}:
\begin{equation}\label{35}
   (f\oplus g)^\dagger = f^\dagger \oplus g^\dagger,
\end{equation}
\begin{equation}\label{36}
   (f+ g)^\dagger = f^\dagger + g^\dagger,\quad 0_{a,b}^\dagger=0_{b,a}
\end{equation}
\begin{equation}\label{25}
   f\otimes(g_1+g_2)=(f\otimes g_1)+(f\otimes g_2),\quad (f_1+f_2)\otimes g=(f_1\otimes g)+(f_2\otimes g),
\end{equation}
\begin{equation}\label{27}
   f\otimes 0_{b,b'}=0_{a\otimes b,a'\otimes b'}=0_{a,a'}\otimes g.
\end{equation}

\section{Scalars and Probability Amplitudes}\label{skalari}

As firmly laid, quantum mechanics is based on complex vector spaces (Hilbert spaces to be more precise). Implied in this structure is the notion of scalars, that correspond here to the field of complex numbers. In categorical language, one can define scalars more abstractly \cite{KL80,AC04}. A \emph{scalar} is a morphism $s:\hspace{1mm}I\rightarrow I$. It can be proved that the hom-set ${\rm Hom}\,(I,I)$, for a compact closed category, is a commutative monoid, therefore justifying further this structure's name.

In $1\cob$, the scalars correspond to closed, one-dimensional manifolds, and the only candidate for such a structure is a finite collection of circles $S^1$ (as denoted on the left-hand side of the following picture). In $1\cobG$, we have $\mathfrak{G}$-circles; topological circles dressed with group elements (right-hand side of the following picture). Due to the compact closed structure of this category, there is a natural interpretation of those circles. Namely, any compact closed category can be lifted to a traced category by a suitable definition of a categorical trace (see Section~\ref{superdense} for the definition).
\vspace{-4mm}
\begin{center}
\begin{tikzpicture}[scale=1.2][line cap=round,line join=round,>=triangle 45,x=1.0cm,y=1.0cm]
\draw [line width=1.pt] (-3.2607457074339212,7.203906133567978) circle (0.7574096972511294cm);
\draw [line width=1.pt] (1.2396821824888864,7.183309209586637) circle (0.7574096972511276cm);
\draw (0.2,6.8) node[anchor=north west] {$g_1$};
\draw (1.8,6.8) node[anchor=north west] {$g_3$};
\draw (1,8.3) node[anchor=north west] {$g_2$};
\end{tikzpicture}
\end{center}

That closed loops should be connected with traces in not limited to a categorical approach to quantum mechanics. Even when considering Feynman diagrams in quantum electrodynamics, fermions loops are accompanied by a trace in spinorial indices. Moreover, in TQFT, we are customed to the fact that closing manifold by gluing the outward future to inward past (if possible), results in a trace, that for a cylinder, i.e.\ the identity, simply gives the dimension of the respective Hilbert space.

Furthermore, as explained in \cite{CO05}, these traces correspond to the probability weights of different branches. This is further confirmed by a Hilbert space picture computations. Recall that one reason we have scalars (different from the multiplicative unit) is normalisation on states. In order to get the probabilistic interpretation, according to the Born rule, we must insist on normalised states. For a state $\beta_{00}\ket{0}\otimes\ket{0}+\beta_{01}\ket{0}\otimes\ket{1}+\beta_{10}\ket{1}\otimes\ket{0}+\beta_{11}\ket{1}\otimes\ket{1}$, we have its norm squared $|\beta_{00}|^2+|\beta_{01}|^2+|\beta_{10}|^2+|\beta_{11}|^2=\mathrm{Tr}(\beta^\dagger \beta)$, where $\beta$ is a $2\times 2$ matrix whose components are $\beta_{ij}$ constants. Therefore, we consider significance of traces in the usual sense.

When dealing with quantum protocols, one usually takes $\beta$ to be proportional to Pauli sigma matrices. (Extended) Pauli matrices are defined as
\begin{equation}
   \sigma_0=
   \begin{pmatrix}
  1&0\\
  0&1
   \end{pmatrix},
   \hspace{3mm}
   \sigma_1= \begin{pmatrix}
    0&1\\
    1&0
    \end{pmatrix},\hspace{3mm}
    \sigma_2= \begin{pmatrix}
    0&-i\\
    i&0
    \end{pmatrix},\hspace{3mm}
    \sigma_3 =\begin{pmatrix}
    1&0\\
    0&-1
    \end{pmatrix}.\nonumber
\end{equation}
We see that those matrices are unitary, self-adjoint and satisfy $\mathrm{Tr}(\sigma_i\sigma_j)=2\delta_{ij}$, where $\delta_{ij}$ is a Kronecker delta symbol (equal to one if $i=j$ and zero otherwise). In order to make the connection with the Bell basis, introduced in Section \ref{uvod}, we take $\beta_1=\sigma_0$, $\beta_2=\sigma_1$, $\beta_3=\sigma_3$ and $\beta_4=-i\sigma_2$. This implies that we have $\mathrm{Tr}(\beta_i\beta_j^\dagger)=2\delta_{ij}$, with the usual definition of matrix adjoint.

However, in order to check whether two diagrams commute, it is usually straightforward to include scalars into consideration. One can then just neglect this issue of scalars and work without explicitly using them (as done previously). They are, of course, needed if one is to obtain probabilities for different outcomes of a measurement, but in this work (and related work of \cite{AC04,CO05}) this is not a primary task.

\newpage

\begin{center}\textmd{\textbf{Acknowledgements} }
\end{center}
\smallskip
Zoran Petri\' c and Mladen Zeki\' c were supported by the Science Fund of the Republic of Serbia,
Grant No. 7749891, Graphical Languages - GWORDS. Du\v san \D{D}or\d{d}evi\' c was supported by the Faculty of Physics, University of Belgrade, through the grant of the Ministry of Education, Science, and Technological Development of the Republic of Serbia (Contract No. 451-03-68/2022-14/200162).

\begin{center}\textmd{\textbf{Data availability statement} }
\end{center}
\smallskip

No datasets were generated or analysed during the current study.

\end{document}